\def\d{\mathrm{d}}
\def\laweq{\buildrel {\mathrm{law}} \over =}
\def\lawis{\buildrel {\mathrm{law}} \over \sim}
\newcommand{\var}{\mathrm{Var}}
\newcommand{\E}{\mathbb{E}}
\newcommand{\R}{\mathbb{R}}
\newcommand{\pib}{\M_M(\mu,\nu)}
\newcommand{\pim}{\M(\mu,\nu)}
\newcommand{\M}{\mathcal{M}}
\newcommand{\cP}{\mathcal{P}}
\newcommand{\F}{\mathcal{F}}
\newcommand{\N}{\mathbb{N}}
\renewcommand{\P}{\mathbb{P}}
\DeclareMathOperator\supp{supp}
\DeclareMathOperator\ri{ri}
\DeclareMathOperator\conv{conv}
\newcommand{\cx}{\le_{\rm cx}}
\newcommand{\mb}{\le_{\rm MM}}
\newcommand{\la}{\leftarrow}
\newcommand{\e}{\le_{\rm E}}
\newcommand{\bone}{ {\mathbbm{1}} }
\newcommand{\Id}{\mathrm{id}}
\renewcommand{\ge}{\geqslant}
\renewcommand{\le}{\leqslant}
\renewcommand{\geq}{\geqslant}
\renewcommand{\leq}{\leqslant}
\newcommand{\ep}{\xi}
\newcommand{\ee}{\varepsilon}
\DeclareMathOperator*{\bary}{bary}
\theoremstyle{plain}
\newtheorem{theorem}{Theorem}[section]
\newtheorem{corollary}[theorem]{Corollary}%
\newtheorem{lemma}[theorem]{Lemma}%
\newtheorem{proposition}[theorem]{Proposition}%
\theoremstyle{definition}
\newtheorem{definition}[theorem]{Definition}%
\newtheorem{example}[theorem]{Example}%
\newtheorem{conjecture}[theorem]{Conjecture}%
\newtheorem{remark}[theorem]{Remark}%
\theoremstyle{definition}
\numberwithin{equation}{section}
\renewcommand{\cite}{\citet}
\renewcommand{\cdots}{\dots}
\DeclareMathOperator*{\argmin}{arg\,min}
\DeclareMathOperator{\Unif}{Unif}
\begin{document}

\title{Martingale Transports and Monge Maps}
\author{Marcel Nutz\thanks{Departments of Statistics and Mathematics, Columbia University, USA. Email: mnutz@columbia.edu. Research supported by NSF grants DMS-1812661, DMS-2106056.} \and 
Ruodu Wang\thanks{Department of Statistics and Actuarial Science, University of Waterloo, Canada. Email: wang@uwaterloo.ca. Research supported by NSERC grant RGPIN-2018-03823 and Canada Research Chairs CRC-2022-00141.  
} 
\and
Zhenyuan Zhang\thanks{Department of Mathematics, Stanford University, USA. Email: zzy@stanford.edu}
}
 \date{\today}

\maketitle

\begin{abstract}
It is well known that martingale transport plans between marginals $\mu\neq\nu$ are never given by Monge maps---with the understanding that the map is over the first marginal~$\mu$, or forward in time. Here, we change the perspective, with surprising results. We show that any  distributions $\mu,\nu$  in convex order with $\nu$ atomless admit a martingale coupling given by a Monge map over the \emph{second} marginal~$\nu$. Namely, we construct a particular coupling called the barcode transport. Much more generally, we prove that such ``backward Monge'' martingale transports are dense in the set of all martingale couplings, paralleling the classical denseness result for Monge transports in the Kantorovich formulation of optimal transport. Various properties and applications are presented, including a refined version of Strassen's theorem and a mimicking theorem where the marginals of a given martingale are reproduced by a ``backward deterministic'' martingale, a remarkable type of process whose current state encodes its whole history.
\end{abstract}

\vspace{0.9em}

{\small
\noindent \emph{Keywords} martingale transport; backward Monge map; Strassen's theorem

\noindent \emph{AMS 2010 Subject Classification}
60G42; %
49N05; %
60E15 %
}

\section{Introduction}

Martingale optimal transport was introduced by \cite{BHP13} in the discrete-time setting and \cite{GHT14} in continuous time. Since then, it has been an area of vigorous research thanks to its rich structures, connections with mathematical finance (see \cite{H11} and \cite{H17} for surveys) and the optimal Skorokhod embedding problem (see \cite{BCH17} and the literature thereafter), and analogies with classical transport theory (e.g., \cite{BJ16}, \cite{BNT17}).
Given probability measures $\mu,\nu$ on $\R$, a transport plan (or transport, or coupling) is the joint distribution of a random vector $(X,Y)$ with $X\lawis \mu$ and $Y\lawis \nu$. It is a martingale transport (MT) if in addition $\E[Y|X]=X$; that is, if $(X,Y)$ is a one-period martingale. We denote the set of transports by $\Pi(\mu,\nu)$ and its subset of martingale transports by $\pim$. %
Strassen's theorem states that $\pim$ is nonempty if and only if $\mu,\nu$ are in convex order, denoted $\mu \le_{\rm cx} \nu$. See Section~\ref{sec:2} below for detailed definitions.

In classical transport theory (without the martingale constraint), much attention has been devoted to transport plans given by Monge maps (transport maps); i.e., transports $(X,Y)$ where $Y=g(X)$ for some measurable function $g:\R\to\R$, or equivalently $\pi\in\Pi(\mu,\nu)$  of the form $\pi=(\Id_{\R},g)_{\#}\mu$ where $\#$ denotes pushforward. The existence of such Monge transports typically requires $\mu$ to be atomless (unless $\nu$ has atoms satisfying particular conditions). Under this natural requirement, it is known that the optimizers for numerous important optimal transport problems are indeed Monge, for instance, the quantile (or Fr\'echet--Hoeffding) coupling which minimizes the square-distance cost. Moreover, the set of all Monge transports is known to be weakly dense in $\Pi(\mu,\nu)$, which leads to the equivalence of the Kantorovich and Monge formulations of optimal transport: for any continuous and suitably integrable cost function~$c$, the value $\inf_{\pi\in\Pi(\mu,\nu)} \int c\,\d\pi$ remains the same if the infimum is only taken over the subset of Monge transports. See for instance \citet[Theorem 9.3]{A03} and \citet[Theorem B]{P07}, as well as the monographs \cite{V03,V09} and \cite{S15} for further background and numerous references. 

In the literature on martingale transport, Monge transports have been mentioned mostly\footnote{A notable exception, kindly pointed out to us by D.\ Kramkov, is the work of \cite{KX22} on a  Kyle-type equilibrium model of insider trading. There, a particular two-dimensional martingale $(X,Y)$ is shown to be of the form $(X_{1},X_{2})=(f_{1}(Y_{1},Y_{2}),f_{2}(Y_{1},Y_{2}))$ and that property is crucial for the interpretation of $(X_{1},X_{2})$ as the total order and price, respectively, of the equilibrium. In this problem, the law~$\nu$ of~$Y$ is prescribed whereas the law~$\mu$ of~$X$ is endogenous to the equilibrium. Remarkably, in our notation, $\M(\mu,\nu)$ is shown to be a singleton for that particular~$\mu$, which suggests that~$\mu$ has quite distinct properties (cf.\ Theorem~\ref{th:uniq}).} to state that they are uninteresting: because any deterministic martingale is constant, a martingale transport can only be of the form~$(X,g(X))$ if $g$ is the identity map. In that case, $\mu=\nu$, and $(X,X)$ is the only martingale coupling. In the martingale setting, one may think automatically along the forward-in-time direction $\mu\to\nu$ that is natural for adapted stochastic processes. In this paper, we change the perspective and look backward in time: nothing obvious precludes the existence of non-trivial Monge maps over the second marginal; that is, martingales $(X,Y)$ of the form $(f(Y),Y)$, or martingale laws $\pi=(f,\Id_{\R})_{\#}\nu$. The name ``backward Monge martingale transport'' seems descriptive but lengthy, and as the ``forward'' version is uninteresting, we simply say \emph{Monge martingale transport} (MMT). Their collection is denoted $\pib$.

This paper is dedicated to the theory of Monge martingale transports as well as their implications. Given marginals $\mu \le_{\rm cx} \nu$, it is not obvious if an MMT exists---apart from the trivial fact that atoms in~$\nu$ often preclude the existence of any Monge transport (martingale or not) from~$\nu$ to~$\mu$. Of all the martingale couplings that have been described in the literature, we are not aware of one that is Monge for reasonably generic marginals. Assuming that~$\nu$ is atomless, we prove in Theorem~\ref{thm1} that $\pib$ is never empty: we construct a particular MMT that we call the \emph{barcode transport}, a name derived from its pictorial representation (see Figure~\ref{fig:1} on page~\pageref{fig:1}).\footnote{Strictly speaking, the {barcode transport} is constructed using the \emph{left-curtain transport}, whereas using the \emph{right-curtain transport} would yield a different {barcode transport}. However, for notational convenience, we simply call it a {barcode transport} instead of a {left-barcode transport}. } The basic idea is to decompose the marginals~$\mu$ and~$\nu$ into countably many pieces (the \emph{bars} of the barcode) that can be coupled by MMTs more easily, and then aggregate. As an auxiliary result, we provide a novel structural description (Proposition~\ref{p1}) of the \emph{left-curtain transport}~$\pi_{\rm lc}$ prominently introduced by \cite{BJ16}; we show in particular that $\pi_{\rm lc}$ is Monge if the first marginal has more mass than the second marginal at any point of its support. While this condition is of course quite special, we can always construct a decomposition of the original marginals $\mu,\nu$ such as to satisfy the condition on each ``bar''.

The aforementioned construction is rather particular and one may wonder whether the barcode transport is just an isolated curious example. Our main result (Theorem~\ref{th:dense}) states that the set $\pib$ of Monge martingale transports is  weakly dense in the set $\pim$ of all martingale transports. This shows that there are many MMTs (for typical marginals) and, paralleling the aforementioned results in classical transport theory, that the value $\inf_{\pi\in\pim} \int c\,\d\pi$ of a martingale optimal transport problem remains the same if the infimum is only taken over the subset of Monge transports (Corollary~\ref{C1}), for any continuous and suitably integrable~$c$. We mention that a quite different (and maybe less direct) parallel was established in the Skorokhod embedding problem: \cite{BeiglbockNutzStebegg.21} show that the stopping times of the Brownian filtration that embed a given distribution are weakly dense in the set of randomized stopping times embedding the distribution.

While the above shows that standard optimal transport problems cannot distinguish $\pib$ from $\pim$, a natural characterization of $\pib$ within $\Pi(\mu,\nu)$ will be given in terms of  generalized (or ``weak'') transport costs in the sense of \cite{GRST17}. These are cost functions depending not only on the origin and destination points of a transport but directly on the kernel (conditional distribution) of the coupling. We show in Proposition~\ref{prop:opt} that $\pib$ is the set of minimizers for a class of such problems, in particular (with obvious abuse of notation)  
$$ \pib=\argmin_{(X,Y) \in \Pi(\mu,\nu)}\E \left[\E[Y|X]^2-\E[X|Y]^2\right]-2\E[XY].$$

We also discuss in detail the uniqueness of MMT (Theorem~\ref{th:uniq}) which is equivalent to the uniqueness of MT, and happens only in very particular circumstances that we characterize in terms of so-called shadows. If both marginals $\mu,\nu$ are atomless, the only case with uniqueness is $\mu=\nu$.

Several applications of MMTs are presented. The first is a refinement of Strassen's theorem on~$\R$ (Theorem~\ref{th:refinedStrassen}) saying that if random variables $X$ and $Y$ on an atomless probability space satisfy $X\le_{\rm cx} Y$, then there exists a random  variable $X'\laweq X$ on the same space such that $X'=\E[Y|X']$ is a martingale. Thus~$Y$ is preserved, whereas the usual Strassen's theorem only guarantees a martingale $(X',Y')$ with the same marginal distributions but no particular relation to the original random variables~$(X,Y)$. 

Going further in a similar direction, we develop a mimicking theorem (in the sense of \cite{Gyongy.86}) with a  class of martingales that we call \emph{backward deterministic.} These are processes $(X_n)_{n\in \N}$ where $(X_j)_{j=1}^n$ is $\sigma(X_n)$-measurable. We may see this as a strengthening of the Markov property where the current state~$X_{n}$ already encodes the whole history $(X_j)_{j=1}^n$. A non-recombining binary tree is a good illustration. Our mimicking theorem (Corollary~\ref{coro:1}) states that given a martingale $(Y_n)_{n\in \N}$ 
with atomless marginals, there exists a backward deterministic martingale $(X_n)_{n\in \N}$ such that $X_n\laweq Y_n$ for all~$n$. 

The remainder of this paper is organized as follows. Section~\ref{sec:2} collects the main results on Monge martingale transports, as well as the result on the left-curtain transport to be used in the existence proof. In Section~\ref{se:applications} we discuss the applications regarding Strassen's theorem, the mimicking theorem with backward deterministic martingales, and the characterization of $\pib$ via generalized optimal transport. Section~\ref{P} contains the proofs for the main results stated in Section~\ref{sec:2}. We conclude with some comments and open problems in Section~\ref{sec:concluding}.

\section{Main results}
\label{sec:2}

Let $\mathcal P(\R)$ denote the set of Borel probability measures on~$\R$ with finite first moment. We say that $\mu,\nu\in\cP(\R)$ are in \emph{convex order}, denoted $\mu\cx\nu$, if $\int\phi\,\d\mu\leq \int\phi\,\d\nu$ for any convex function $\phi:\R\to\R$. This implies that $\mu,\nu$ have the same mean. We use the same notation for unnormalized finite measures; in that case $\mu,\nu$ must also have the same total mass. Occasionally we  write $X\cx Y$ for random variables $X,Y$ to indicate that their laws are in convex order. Recall from the Introduction that $\Pi(\mu,\nu)$ denotes the set of couplings, $\pim$ the subset of martingale couplings, and $\pib$ the further subset of (backward) Monge martingale transports. {We say that a measure $\pi$ is supported on a set $A$ if $A^{c}$ is a $\pi$-nullset. The topological support (that is, the smallest such set $A$ that is closed) may be different.}

Our first result yields the existence of a Monge martingale transport when the second marginal~$\nu$ is atomless. More generally, when $\nu$ has atoms, it establishes a martingale transport that is (backward) Monge outside the atoms---the Monge property on the atoms is  typically not achievable even without the martingale constraint. %

 \begin{theorem}[Existence]\label{thm1} Let $\mu,\nu\in\mathcal P(\R)$ satisfy $\mu\cx\nu$. There exists $\pi\in\pim$ and a Borel function $h:\R\to\R$ such that $\pi(T_{\rm rg}\cup T_{\rm atom})=1$, where
 \begin{enumerate}[(i)]
    \item $T_{\rm rg}=\{(h(y),y):y\in\R\}$;
    \item $T_{\rm atom}=\{(x,y):\nu(\{y\})>0\}$.
\end{enumerate}
In particular, if $\nu$ is atomless, $\pi$ is a Monge martingale transport.
 \end{theorem} 

To prove Theorem~\ref{thm1}, we will  explicitly construct a coupling called the \emph{barcode transport}. As mentioned in the Introduction, the basic idea is to decompose the marginals into countably many mutually singular parts---the bars of the barcode; cf.\ Figure~\ref{fig:1} below---tailored such that  the left-curtain transport~$\pi_{\rm lc}$ for each part is Monge outside of the atoms of~$\nu$. We thus need criteria for~$\pi_{\rm lc}$ to be Monge, and that is the purpose of the next result.

To state the definition of~$\pi_{\rm lc}$ given by \cite{BJ16}, we write $\mu\e\nu$ for finite measures $\mu,\nu$ with finite first moment if $\int\phi\,\d\mu\leq\int\phi\,\d\nu$ for any nonnegative convex function $\phi:\R\to\R$. If $\mu$ and $\nu$ have the same total mass, this is equivalent to $\mu\cx\nu$, but a quite different example is that $\mu\leq\nu$ (set-wise) implies $\mu\e\nu$. Given $\mu\e\nu$, the \emph{shadow}~$S^\nu(\mu)$ of $\mu$ in $\nu$ is defined 
as 
$S^\nu(\mu)=\min\{\eta: \, \mu\cx \eta \leq\nu\}$, where the minimum is taken in the partial order~$\cx$.
Intuitively, the shadow is formed by mapping each $\mu$-particle into~$\nu$ while greedily dispersing its mass as little as possible. See \citet[Lemma 4.6]{BJ16} for the wellposedness of~$S^\nu(\mu)$.

Given $\mu\cx\nu$, the \emph{left-curtain transport} $\pi_{\rm lc}\in \pim$  is uniquely defined by the property that it transports $\mu|_{(-\infty,x]}$ to its shadow $S^\nu(\mu|_{(-\infty,x]})$ for every $x\in\R$. It
 can be considered as the martingale analogue of the quantile coupling with respect to the convex order.
The ``forward'' structure of $\pi_{\rm lc}$ has been analyzed in detail by \cite{BJ16} as well as \cite{HT16} and \cite{HN19}; see also Section~\ref{S}. The following result describes the structure from the backward perspective and may be of independent interest. It states that in general, $\pi_{\rm lc}$ is supported on three sets: the reverse graph (or antigraph) $S_{\rm rg}$ of a function~$h:\R\to\R$, the diagonal $S_{\rm diag}$, and the atomic part $S_{\rm atom}$. For the proof of Theorem~\ref{thm1}, we will only use the second assertion, namely that if $\d\mu/\d(\mu+\nu)\geq 1/2$ $\mu$-a.e., the reverse graph can also capture the mass on $S_{\rm diag}$.

 \begin{proposition}[Structure of $\pi_{\rm lc}$]\label{p1}
  Let $\mu\cx\nu$. There exists a Borel function $h:\R\to\R$ such that the left-curtain transport $\pi_{\rm lc}$ satisfies $\pi_{\rm lc}(S_{\rm rg}\cup S_{\rm diag}\cup S_{\rm atom})=1$, where
  \begin{enumerate}[(i)]
      \item $S_{\rm rg}=\{(h(y),y):y\in\R\}$;
      \item $S_{\rm diag}=\{(x,x):x\in\R\}$;
      \item $S_{\rm atom}=\{(x,y):\nu(\{y\})>0\}$.
  \end{enumerate}
  If $\d\mu/\d(\mu+\nu)\geq 1/2$ $\mu$-a.e., then $\pi_{\rm lc}(S_{\rm rg}\cup S_{\rm atom})=1$ for some Borel~$h$. In particular, if in addition~$\nu$ is atomless, then $\pi_{\rm lc} \in \pib$.
 \end{proposition}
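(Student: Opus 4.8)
The plan is to work from the known "forward" structure of $\pi_{\rm lc}$ and transpose it. Recall from \cite{BJ16} that $\pi_{\rm lc}$ has a monotonicity structure: writing $\pi_{\rm lc}=\mu\otimes\kappa$ for a martingale kernel $\kappa$, for $\mu$-a.e.\ $x$ the support of $\kappa_x$ is either a singleton $\{x\}$ or a pair $\{l(x),r(x)\}$ with $l(x)\le x\le r(x)$, and the maps $x\mapsto l(x)$, $x\mapsto r(x)$ are monotone in the appropriate sense on the relevant sets. First I would use this to decompose $\mu$ (and $\pi_{\rm lc}$) into the \emph{diagonal part}, where $\kappa_x=\delta_x$, and the \emph{binary part}, where $\kappa_x$ is supported on two distinct points. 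On the binary part, the pair of monotone maps $(l,r)$ means that the ``destination'' variable $Y$ (taking values in the two-point set $\{l(x),r(x)\}$) determines $x$ outside of countably many values of $x$ — here the atoms of $\nu$ enter, since a $\nu$-atom $y$ may receive mass from a whole interval of $x$'s. This yields a Borel function $h$ with $X=h(Y)$ on the binary part away from $\nu$-atoms, giving the sets $S_{\rm rg}$ and $S_{\rm atom}$; the diagonal part gives $S_{\rm diag}$. The diagonal part is where one must be careful: it is $\mu$-a.e.\ equal to $\nu$ restricted to a set, and on it $Y=X$, so $X$ is trivially a function of $Y$ there — but the \emph{same} function $h$ cannot in general serve both the binary part and the diagonal part, which is precisely why three sets are needed in the first statement.

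For the second statement, assume $\d\mu/\d(\mu+\nu)\ge 1/2$ $\mu$-a.e., i.e.\ $\mu\ge\nu$ is false in general but locally $\mu$ dominates; more precisely this condition says $\mu(A)\ge\nu(A)$ cannot fail too badly — I would first restate it as: for any Borel $A$, writing $\mu_A=\mu|_A$, we have $\mu_A\ge \tfrac12(\mu+\nu)|_A$, hence $2\mu|_A\ge(\mu+\nu)|_A$, i.e.\ $\mu|_A\ge\nu|_A$. So the hypothesis is equivalent to $\mu$-a.e.\ local domination, and a short argument shows this forces the diagonal part of $\pi_{\rm lc}$ to be \emph{empty} — except possibly on $\nu$-atoms. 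The key mechanism: $\kappa_x=\delta_x$ can hold only on a set where $\mu$ and $\nu$ agree after the left-curtain ``fills up'' the shadow, and local domination $\mu|_A\ge\nu|_A$ leaves no room for the shadow to place mass at $x$ itself while still being $\le\nu$ — any diagonal mass at $x$ would have to be matched by $\nu$-mass at $x$, i.e.\ $x$ is a $\nu$-atom. I would make this rigorous via the shadow's defining minimality property: if $\mu|_{(-\infty,x]}$ had an atom of diagonal transport at a continuity point $x_0$ of $\nu$, one could compare the shadow $S^\nu(\mu|_{(-\infty,x_0]})$ with a competitor that spreads that mass slightly, contradicting $\cx$-minimality together with $\mu|_A\ge\nu|_A$. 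Once the diagonal part is absorbed (its $\nu$-atomless portion shown to be null, its $\nu$-atom portion folded into $S_{\rm atom}$), the support lies in $S_{\rm rg}\cup S_{\rm atom}$, and if $\nu$ is atomless then $S_{\rm atom}$ is a $\pi_{\rm lc}$-nullset (since its $X$-marginal would be carried by a $\nu$-null set, impossible as the $Y$-marginal is $\nu$), leaving $\pi_{\rm lc}$ supported on the reverse graph $S_{\rm rg}=\{(h(y),y)\}$, i.e.\ $\pi_{\rm lc}=(h,\Id_\R)_\#\nu\in\pib$.

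The main obstacle I anticipate is the measurability and the careful handling of $\nu$-atoms in constructing $h$: the monotone maps $l,r$ from \cite{BJ16} are defined $\mu$-a.e., and I must pass to the ``inverse'' description indexed by $y$, showing that for $\nu$-a.e.\ $y$ there is a unique $x$ (off the atoms) with $y\in\{l(x),r(x)\}$ and that the assignment is Borel. This is a standard but nontrivial ``monotone coupling $\Rightarrow$ Monge map over the second marginal'' argument; the atomlessness/atom dichotomy is exactly what makes the inverse single-valued off $S_{\rm atom}$. The secondary obstacle is pinning down the precise sense in which the hypothesis $\d\mu/\d(\mu+\nu)\ge 1/2$ kills the diagonal — I expect this to follow cleanly from the shadow minimality once phrased correctly, but it requires knowing that diagonal transport in $\pi_{\rm lc}$ at a point $x$ forces $\d S^\nu(\mu|_{(-\infty,x]})/\d\mu \ge$ (something) at $x$, and that local domination makes this incompatible with $\le\nu$ unless $x$ is a $\nu$-atom.
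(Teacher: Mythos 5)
Your plan for the first assertion (the three-set decomposition) tracks the paper's argument reasonably well: both proceed from the forward ``legs'' structure of $\pi_{\rm lc}$, split off the diagonal and the $\nu$-atomic part, and use monotonicity of the legs to conclude that the binary part is backward single-valued outside a countable exceptional set of $y$'s. The paper spells this out with a three-case analysis on a left-monotone set $\Gamma$ (Lemma~\ref{lm}), identifying the exceptional $y$'s as images of endpoints of gaps in $\supp(\mu)$; your account is looser but heads in the same direction.

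The second assertion is where your plan has a genuine gap. You claim that the hypothesis $\d\mu/\d(\mu+\nu)\geq 1/2$ $\mu$-a.e.\ ``forces the diagonal part of $\pi_{\rm lc}$ to be empty --- except possibly on $\nu$-atoms.'' This is false. Take $\mu=\nu$ atomless: then $\d\mu/\d(\mu+\nu)\equiv 1/2$, the hypothesis holds with equality, and $\pi_{\rm lc}$ is the identity transport --- entirely diagonal and with no $\nu$-atoms. More generally, whenever $\mu$ and $\nu$ coincide on a set of positive mass, the diagonal part is nontrivial. What the hypothesis actually gives, combined with the paper's Lemma~\ref{le:needSpace} (which shows $d_\mu\leq d_\nu$ $\mu$-a.e.\ on the set $\{\kappa_x=\delta_x\}$), is that $d_\mu=d_\nu$ $\mu$-a.e.\ on the diagonal part --- so the diagonal part is not empty but is an identity transport between two locally identical measures, which is trivially backward Monge with $h=\Id$ there. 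The paper exploits exactly this: it ``removes'' the diagonal set $S$ from both marginals (where $h$ is set to the identity), and then runs the monotonicity argument on the binary remainder to produce the rest of~$h$. Your approach of ruling out the diagonal rather than absorbing it into~$h$ also clashes with your own earlier observation that ``the same function~$h$ cannot in general serve both the binary part and the diagonal part'' --- the resolution is not that one of them vanishes, but that after removing the piece with $d_\mu=d_\nu$, the two pieces live on $\nu$-disjoint sets of $y$'s so a single Borel~$h$ can be stitched together. (Your subsidiary restatement ``$\mu|_A\geq\nu|_A$ for all Borel $A$'' is also incorrect as stated: the inequality $d_\mu\geq 1/2$ holds only $\mu$-a.e., so the set-wise domination holds only for $A\subseteq\{d_\mu\geq 1/2\}$; outside the $\mu$-support, $\nu$ may carry arbitrary mass.)
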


The second assertion is not directly a consequence of the first part as the function~$h$ may need to be redefined. We refer to Section~\ref{S} for further comments on~$\pi_{\rm lc}$. 

Figure~\ref{fig:1} illustrates the barcode transport and the left-curtain transport for Gaussian marginals. We observe that the left-curtain transport is not Monge in this case, and this arises due to the mass on~$S_{\rm diag}$ represented in light-gray over a subset of $\{\d\mu/\d(\mu+\nu)< 1/2\}$.

\begin{figure}[hbtp]
\begin{center}
    \begin{subfigure}[b]{0.48\textwidth}
    \centering

 \begin{tikzpicture}
\centering
\begin{axis}[width=1.1\textwidth,
      height=0.75\textwidth,
  axis lines=none,%
  domain=-10:10,
  xmin=-3, xmax=3,
  ymin=-2, ymax=2,
  samples=80,
]
  \addplot[name path=f,blue,domain={-3:3},mark=none]
    {1.3*exp(-1.69*x^2)} node[pos=.82,above] {$\mu$};
  \addplot[name path=g,blue,domain={-3:3},mark=none]
    {0};

    \addplot[black!10]fill between[of=f and g, soft clip={domain=-0.64:0.64}];
    \addplot[black!80]fill between[of=f and g, soft clip={domain=0.64:0.77}];
    \addplot[black!80]fill between[of=f and g, soft clip={domain=-0.77:-0.64}];
    \addplot[black!30]fill between[of=f and g, soft clip={domain=-0.93:-0.77}];
    \addplot[black!30]fill between[of=f and g, soft clip={domain=0.77:0.93}];
    \addplot[black!97]fill between[of=f and g, soft clip={domain=-1.11:-0.93}];
    \addplot[black!97]fill between[of=f and g, soft clip={domain=0.93:1.11}];
    \addplot[black!50]fill between[of=f and g, soft clip={domain=-1.33:-1.11}];
    \addplot[black!50]fill between[of=f and g, soft clip={domain=1.11:1.33}];
    \draw [decorate,decoration={brace,amplitude=5pt},xshift=0pt](237,200) -- (363,200) node [black,midway,yshift=1.7cm]  { \hspace{4cm}$\d\mu/\d(\mu+\nu)>1/2$};

 \path[<-, draw,  thick, dashed](300,220) 
      to[out=90, in=270] (420,326)
     node[right]{};

  \addplot[name path=p,blue,domain={-3:3}]
    {exp(-(x)^2)-2} node[pos=.83,above] {$\nu$};  
  \addplot[name path=q,blue,domain={-3:3}]
    {-2};
     \addplot[black!10]fill between[of=p and q, soft clip={domain=-0.77:0.77}];
    \addplot[black!80]fill between[of=p and q, soft clip={domain=-0.93:-0.77}];
    \addplot[black!80]fill between[of=p and q, soft clip={domain=0.77:0.93}];
    \addplot[black!30]fill between[of=p and q, soft clip={domain=-1.11:-0.93}];
    \addplot[black!30]fill between[of=p and q, soft clip={domain=0.93:1.11}];
    \addplot[black!97]fill between[of=p and q, soft clip={domain=-1.33:-1.11}];
    \addplot[black!97]fill between[of=p and q, soft clip={domain=1.11:1.33}];
    
    \addplot[black!50]fill between[of=p and q, soft clip={domain=-1.6:-1.33}];
    \addplot[black!50]fill between[of=p and q, soft clip={domain=1.33:1.6}];
 
\end{axis}
\draw[->](3,2.2)--(2.78,0);
\draw[->](3,2.2)--(3.15,0);
\draw[->](3.5,2.2)--(2.69,0.01);
\draw[->](3.5,2.2)--(4.1,0);
\end{tikzpicture}
\caption{The barcode transport}

    \end{subfigure}
    \begin{subfigure}[b]{0.48\textwidth}\centering
    \begin{tikzpicture}
\centering
\begin{axis}[width=1.1\textwidth,
      height=0.75\textwidth,
  axis lines=none,%
  domain=-10:10,
  xmin=-3, xmax=3,
  ymin=0, ymax=4,
  samples=80,
]
  \addplot[name path=f,blue,domain={-3:3},mark=none]
    {1.2*exp(-1.44*x^2)+2} node[pos=.82,above] {$\mu$};
  \addplot[name path=g,blue,domain={-3:3},mark=none]
    {2};
    \addplot[black!10]fill between[of=f and g, soft clip={domain=-3:-0.64}];    
    \addplot[black!55]fill between[of=f and g, soft clip={domain=-0.64:3}]; 

  \addplot[name path=p,blue,domain={-3:3}]
    {0.999*exp(-(x)^2)}  node[pos=.83,above] {$\nu$};
    
    \addplot[name path=r,blue,domain={-3:-0.64},mark=dots]
    {1.2*exp(-1.44*(x)^2)};   
  \addplot[name path=q,blue,domain={-3:3}]
    {0};
    \addplot[black!10]fill between[of=q and r, soft clip={domain=-3:-0.64}];    
    \addplot[black!55]fill between[of=r and p, soft clip={domain=-3:-0.64}];  
    \addplot[black!55]fill between[of=q and p, soft clip={domain=-0.64:3}];    
   
\end{axis}
\draw[->](2.1,2.2)--(2.1,0);
\draw[->](2.5,2.2)--(2.5,0);
\end{tikzpicture}
\caption{The left-curtain transport}
    \end{subfigure}

\caption{Comparison of the barcode transport and the left-curtain transport for Gaussian marginals. (a) The barcode transport consists of a collection of left-curtain transports represented by different shades. The map~$h$ follows the reverse of the indicated arrows. (b) The left-curtain transport is the identity on the light-gray area and does not admit a (backward) Monge map there.
}\label{fig:1}
\end{center}
\end{figure}
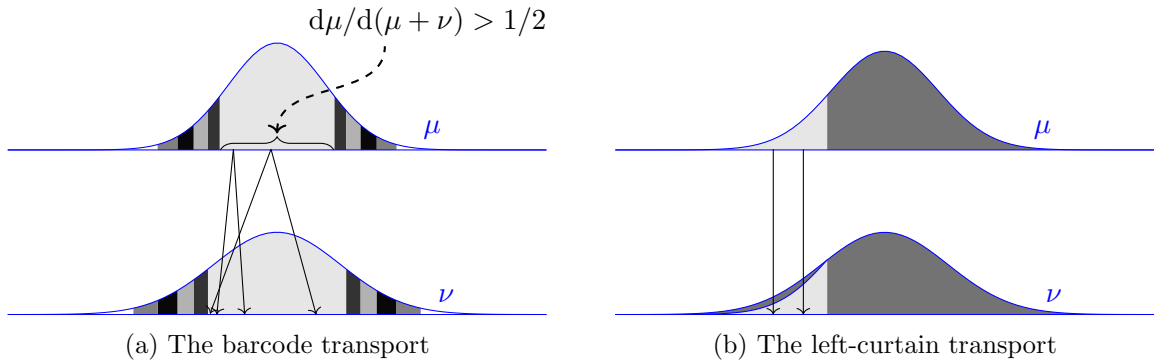

We continue with our main result, showing that  the set $\pib$ of Monge martingale transports is surprisingly rich.
 
 \begin{theorem}[MMTs are dense]\label{th:dense}
Let $\mu\cx\nu$ with $\nu$ atomless. Then $\pib$ is weakly dense in $\pim$. If $\mu$ is discrete, it is also dense for the $\infty$-Wasserstein topology.
 \end{theorem}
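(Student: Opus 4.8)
The plan is to mimic the classical argument establishing the density of Monge maps among Kantorovich couplings, with Theorem~\ref{thm1} supplying the decisive new ingredient: as soon as two marginals are in convex order and the target is atomless, a Monge martingale transport exists. So I would decompose a given $\pi\in\pim$ into countably many ``bars'' on which the source marginal is nearly a point mass, replace each bar by an MMT furnished by Theorem~\ref{thm1}, and reassemble; the result is automatically a martingale that is Monge over~$\nu$, and one checks it is close to~$\pi$.

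Concretely, fix $\pi\in\pim$ with martingale kernel $\kappa$ (so $\pi(\d x,\d y)=\mu(\d x)\,\kappa_x(\d y)$ and $\bary(\kappa_x)=x$) and fix $n\in\N$. First I partition $\R$ into intervals $I_1,I_2,\dots$ of length $\le 1/n$ and set $\mu_k:=\mu|_{I_k}$, $\nu_k:=\int_{I_k}\kappa_x\,\mu(\d x)$; then $\mu_k\cx\nu_k$, $\sum_k\nu_k=\nu$, and $\bary(\nu_k)=\int_{I_k}x\,\mu(\d x)=\bary(\mu_k)$ by the martingale property. Next I partition $\R$ into intervals $J_1,J_2,\dots$ of length $\le 1/n$ and, using that $\nu$ is atomless, split each $J_l$ into Borel pieces $C_{kl}$ with $\nu(C_{kl})=\nu_k(J_l)$; setting $A_k:=\bigcup_l C_{kl}$ gives a Borel partition of $\R$ with $\nu(A_k)=\mu(I_k)$ and $\nu|_{A_k}$ agreeing with $\nu_k$ on the partition $\{J_l\}$, so that $W_\infty(\nu|_{A_k},\nu_k)\le 1/n$. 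If $\mu_k\cx\nu|_{A_k}$ for every $k$, then Theorem~\ref{thm1} applied to the atomless target $\nu|_{A_k}$ yields $\pi^{(k)}\in\pib(\mu_k,\nu|_{A_k})$; since the $I_k$ are pairwise disjoint and the $A_k$ are pairwise disjoint, $\pi_n:=\sum_k\pi^{(k)}$ lies in $\pim$ (the conditional mean of $Y$ given $X=x\in I_k$ is $x$) and is Monge over~$\nu$ (glue the maps), i.e.\ $\pi_n\in\pib$.

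To get $\pi_n\to\pi$ weakly it suffices to test against bounded Lipschitz $f:\R^2\to\R$: expanding $\int f\,\d\pi_n=\sum_k\int_{I_k}\int f(x,y)\,\pi^{(k)}(\d y\mid x)\,\mu(\d x)$, replacing $x$ by $b_k:=\bary(\mu_k)$ (error $\le\mathrm{Lip}(f)/n$ per unit mass) and using that the $Y$-marginal of $\pi^{(k)}$ is $\nu|_{A_k}$, one gets $\int f\,\d\pi_n\approx\sum_k\int f(b_k,y)\,\nu|_{A_k}(\d y)$; the same expansion for $\pi$ gives $\sum_k\int f(b_k,y)\,\nu_k(\d y)$, and since $\nu|_{A_k}$ and $\nu_k$ agree on the $1/n$-partition $\{J_l\}$ the difference is again $\le\mathrm{Lip}(f)/n$, whence $|\int f\,\d\pi_n-\int f\,\d\pi|\to 0$. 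For discrete~$\mu$ I take the $I_k$ to be the singletons $\{x_k\}$, so the kernel of $\pi_n$ at $x_k$ equals $\nu|_{A_k}/\mu(\{x_k\})$, which agrees with $\kappa_{x_k}$ on $\{J_l\}$ and is thus within $1/n$ of it in $W_\infty$; coupling $\pi$ and $\pi_n$ through a common $X\sim\mu$ then gives $W_\infty(\pi,\pi_n)\le 1/n$, yielding the $\infty$-Wasserstein density claim.

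The main obstacle is the requirement $\mu_k\cx\nu|_{A_k}$, which the raw construction secures only up to $O(1/n)$. The barycenters of $\nu|_{A_k}$ and $\mu_k$ differ by $O(\mu(I_k)/n)$, and I would repair this using the leftover freedom in how $C_{kl}$ is carved out of $J_l$: since $\nu|_{J_l}$ is atomless, the barycenter of $C_{kl}$ can be shifted within an interval of length comparable to $|J_l|\,\nu(C_{kl})$, and a balancing/flow argument across the index $l$---the accumulated discrepancy for each $k$ and the total available slack being of the same order $1/n$---lets me enforce $\bary(\nu|_{A_k})=\bary(\mu_k)$ exactly (at worst relaxing $\nu(C_{kl})=\nu_k(J_l)$ to equality up to $O(1/n)$, which only changes constants above). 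For general~$\mu$ there is a second point: one also needs $\nu|_{A_k}$ to be no more concentrated than $\mu_k$; since $\mu_k$ has diameter $<1/n$ and $\nu_k$ already dominates it in convex order, this can fail only when $\nu_k$ too is concentrated within $O(1/n)$ of $b_k$, i.e.\ when $\pi$ barely moves the mass on $I_k$, in which case I would replace that bar by a near-diagonal coupling of $\mu_k$ with a small $\nu$-piece around $b_k$ and absorb the remainder into a further sub-splitting. For discrete~$\mu$ this second point is vacuous (a point mass is in convex order with any measure of the same mean), which is precisely why the stronger conclusion holds there. As an alternative to invoking Theorem~\ref{thm1} on each bar, one could instead build $\pi^{(k)}$ as the left-curtain transport after an additional decomposition meeting the density criterion $\d\mu_k/\d(\mu_k+\nu_k)\ge 1/2$ of Proposition~\ref{p1}, which the proof of Theorem~\ref{thm1} shows is always attainable.
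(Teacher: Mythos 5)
Your skeleton---partition the source into small intervals $I_k$, define $\nu_k$ as the image of $\mu|_{I_k}$ under $\pi$, rearrange the $\nu_k$ into mutually singular pieces $\nu|_{A_k}$ with matching barycenters, apply Theorem~\ref{thm1} on each bar, and glue---is the right one, and your treatment of the discrete case is essentially the paper's: when each $\mu_k$ is a Dirac the convex-order requirement $\mu_k\cx\nu|_{A_k}$ reduces to barycenter matching, which a shadow-based rearrangement (the paper's Lemma~\ref{lem:june-1}) secures exactly. The weak-convergence accounting via Lipschitz test functions is also sound.

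For general $\mu$, however, there is a genuine gap, and it is exactly at the step you flag as the ``second point.'' You need $\mu_k\cx\nu|_{A_k}$, but the raw rearrangement only gives $W_\infty(\nu_k,\nu|_{A_k})\le 1/n$ (and, after repair, matching barycenters), which does \emph{not} imply convex order: a $W_\infty$-perturbation of size $1/n$ changes the potential $u_{\nu_k}$ by up to $\mu(I_k)/n$ uniformly, and the gap $u_{\nu_k}-u_{\mu_k}$ can be arbitrarily small (in fact it vanishes at the endpoints of irreducible components), so even a tiny rearrangement can push $u_{\nu|_{A_k}}$ below $u_{\mu_k}$ somewhere. Your diagnosis that this ``can fail only when $\nu_k$ is concentrated within $O(1/n)$ of $b_k$'' is not correct---convex order is a pointwise inequality of potentials and can fail locally even when $\nu_k$ has far-flung mass---and the proposed repair (``near-diagonal coupling of $\mu_k$ with a small $\nu$-piece around $b_k$ and absorb the remainder into a further sub-splitting'') is not a construction: the bars compete for the same $\nu$-mass near each point, the ``further sub-splitting'' has no termination mechanism, and it is not shown that the resulting second marginal is still $\nu$ or that the approximation error stays controlled. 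The paper resolves precisely this difficulty by a different device: it first reduces to a single irreducible component (Lemma~\ref{lem:irred}, so that $u_\mu<u_\nu$ strictly off a $\mu$-null set), shows that the ``small gap'' region $A_\delta=\{0\le u_\nu-u_\mu<\delta\}+[-\delta,\delta]$ has $\mu(A_\delta)\to0$ (Lemma~\ref{lem:june-2}), then perturbs the target coupling to $\tilde\pi_0=(1-\lambda)\pi_0+\lambda\pi_3$ where $\pi_3$ is built from Rademacher noise of amplitude $\delta$ added outside $A_\delta$; this manufactures a \emph{uniform} lower bound on kernel dispersion, quantified by Lemma~\ref{l1}, that absorbs the $O(1/n)$ rearrangement error, and one recovers $\pi_0$ by sending $\lambda\to0$. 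That mixing-with-noise step is the decisive ingredient missing from your argument; without it (or an equivalent mechanism to create uniform convex-order slack per bar), the general-$\mu$ case does not go through.
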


The proof is significantly more involved than the existence argument, hence we defer a sketch to Section~\ref{se:densityProof}. 
As a consequence of Theorem~\ref{th:dense}, we obtain the equivalence of the Kantorovich and (backward) Monge formulations for martingale optimal transport.

 \begin{corollary}\label{C1}
 Let $\mu\cx\nu$ with $\nu$ atomless. If $c:\R^{2}\to\R$ is continuous with $|c(x,y)|\leq a(x)+b(y)$ for some  $a\in L^1(\mu)$ and $b\in L^1(\nu)$, then
 $$\inf_{\pi\in\pib}\int_{\R\times\R} c(x,y)\,\pi(\d x,\d y)=\inf_{\pi\in\pim}\int_{\R\times\R} c(x,y)\,\pi(\d x,\d y).$$
 \end{corollary}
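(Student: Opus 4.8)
The plan is to deduce Corollary~\ref{C1} from Theorem~\ref{th:dense} by the standard argument that links Kantorovich and Monge formulations in classical transport. Since $\pib \subseteq \pim$, the inequality
$$\inf_{\pi\in\pim}\int c\,\d\pi \le \inf_{\pi\in\pib}\int c\,\d\pi$$
is trivial, so the only content is the reverse inequality $\inf_{\pi\in\pib}\int c\,\d\pi \le \inf_{\pi\in\pim}\int c\,\d\pi$. To prove it, I would fix an arbitrary $\pi\in\pim$ and produce a sequence $\pi_k\in\pib$ with $\int c\,\d\pi_k \to \int c\,\d\pi$; taking the infimum over $\pi$ then yields the claim.

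\medskip
\noindent\textbf{Key steps.} First, invoke Theorem~\ref{th:dense}: since $\nu$ is atomless, $\pib$ is weakly dense in $\pim$, so there exist $\pi_k\in\pib$ with $\pi_k\to\pi$ weakly. Second, I need to upgrade weak convergence to convergence of the integral $\int c\,\d\pi_k$. The function $c$ is continuous but not bounded, so plain weak convergence is not enough; however, the growth bound $|c(x,y)|\le a(x)+b(y)$ with $a\in L^1(\mu)$, $b\in L^1(\nu)$ provides a uniform integrability-type control, because every $\pi_k$ (and $\pi$) has first marginal $\mu$ and second marginal $\nu$. Concretely, I would argue as follows: for $R>0$ let $\psi_R:\R^2\to[0,1]$ be a continuous cutoff equal to $1$ on $[-R,R]^2$ and $0$ outside $[-R-1,R+1]^2$. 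Then $c\psi_R$ is bounded continuous, so $\int c\psi_R\,\d\pi_k \to \int c\psi_R\,\d\pi$ by weak convergence. Moreover, uniformly in $k$,
$$\left| \int c\,\d\pi_k - \int c\psi_R\,\d\pi_k \right| \le \int_{([-R,R]^2)^c} (a(x)+b(y))\,\pi_k(\d x,\d y) \le \int_{\{|x|>R\}} a\,\d\mu + \int_{\{|y|>R\}} b\,\d\nu,$$
and the right-hand side tends to $0$ as $R\to\infty$ by dominated convergence (using $a\in L^1(\mu)$, $b\in L^1(\nu)$), independently of $k$; the same bound applies to $\pi$. A standard $\varepsilon/3$ argument—choose $R$ large so the two tail terms are below $\varepsilon/3$, then $k$ large so $|\int c\psi_R\,\d\pi_k - \int c\psi_R\,\d\pi|<\varepsilon/3$—gives $\limsup_k |\int c\,\d\pi_k - \int c\,\d\pi| \le \varepsilon$ for every $\varepsilon>0$, hence $\int c\,\d\pi_k\to\int c\,\d\pi$. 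This yields $\inf_{\pi\in\pib}\int c\,\d\pi \le \int c\,\d\pi$, and taking the infimum over $\pi\in\pim$ completes the proof.

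\medskip
\noindent\textbf{Main obstacle.} There is no deep obstacle here: Theorem~\ref{th:dense} does all the heavy lifting, and the corollary is a soft consequence. The only point requiring care is the passage from weak convergence to convergence of the (unbounded) cost integral, i.e.\ making the tail estimate uniform in $k$. This is exactly where the hypothesis $|c|\le a(x)+b(y)$ with $a\in L^1(\mu)$, $b\in L^1(\nu)$ is used, and it works precisely because the fixed marginals $\mu,\nu$ make the tail mass of every competitor uniformly controlled. One should also note that the bound also shows $c\in L^1(\pi)$ for every $\pi\in\Pi(\mu,\nu)$, so all integrals in the statement are well defined and finite. For the second assertion about discreteness, if $\mu$ is discrete then Theorem~\ref{th:dense} provides convergence in the $\infty$-Wasserstein topology, which is stronger than weak convergence, so the same argument applies a fortiori (and in fact the uniform tail bound is immediate since $\infty$-Wasserstein convergence confines the supports to a common bounded region up to an arbitrarily small perturbation); no separate treatment is needed for the conclusion of Corollary~\ref{C1}.
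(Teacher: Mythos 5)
Your proof is correct and matches the paper's (implicit) approach: the paper states Corollary~\ref{C1} as an immediate consequence of Theorem~\ref{th:dense} without writing out details, and your argument supplies exactly the standard uniform-integrability upgrade from weak convergence to convergence of the cost integral, using the fixed marginals to control the tails uniformly. Nothing is missing.
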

  
The final theorem of this section characterizes the uniqueness of MMT; that is, when $\pib$ is a singleton. We can already see from the denseness result in Theorem~\ref{th:dense} that this is equivalent to $\pim$ being a singleton (a more direct proof will be given in Section~\ref{P}). In terms of the marginals, uniqueness turns out to depend on the atoms of~$\mu$ and their shadows.

\begin{theorem}[Uniqueness]\label{th:uniq}
  Let $\mu\cx\nu$ with $\nu$ atomless. The following are equivalent:
    \begin{enumerate}[(i)]
     \item[(i)] The MT from $\mu$ to $\nu$ is unique. 
    \item[(ii)] The MMT from $\mu$ to $\nu$ is unique.
    \item[(iii)] Let $\mu_a:=\sum_{j\in \N} a_j\delta_{x_j}$ be the atomic part of $\mu$, where $\{x_j\}_{j\in\N}$ are distinct. Then the shadows $S^\nu(a_j\delta_{x_j})$, $j\in \N$ are mutually singular and $\mu-\mu_a=\nu-\sum_{j\in \N} S^\nu(a_j\delta_{x_j})$.
 \end{enumerate} 
\end{theorem}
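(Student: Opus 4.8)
The plan is to establish the implications $(iii)\Rightarrow(i)$ and $(i)\Rightarrow(iii)$, which together give $(i)\Leftrightarrow(iii)$, and to treat $(i)\Leftrightarrow(ii)$ separately. The implication $(i)\Rightarrow(ii)$ is immediate: $\pib\subseteq\pim$ and $\pib\neq\emptyset$ by Theorem~\ref{thm1} since $\nu$ is atomless, so $\pim$ a singleton forces $\pib$ a singleton. Conversely, $\pim$ is the weak closure of $\pib$ by Theorem~\ref{th:dense}, so if $\pib=\{\pi_0\}$ then $\pim=\{\pi_0\}$; a more hands-on version of $(ii)\Rightarrow(i)$ also drops out of the analysis below (by arranging the couplings constructed there to be Monge, as in Theorem~\ref{thm1}).

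For $(iii)\Rightarrow(i)$, I would show that every $\pi\in\pim$ equals the explicit coupling
\[
\pi^\star:=\sum_{j\in\N}\delta_{x_j}\otimes\tfrac{1}{a_j}S^\nu(a_j\delta_{x_j})\;+\;(\Id_{\R},\Id_{\R})_{\#}(\mu-\mu_a).
\]
Disintegrate $\pi(\d x,\d y)=\mu(\d x)\,\pi_x(\d y)$ with $\bary(\pi_x)=x$. On each atom $x_j$ the measure $\eta_j:=a_j\pi_{x_j}$ satisfies $a_j\delta_{x_j}\cx\eta_j$ by Jensen and $\eta_j\le\nu$, so $\eta_j\gcx\beta_j:=S^\nu(a_j\delta_{x_j})$ by minimality of the shadow; summing over $j$, $\sum_j\eta_j\gcx\sum_j\beta_j$. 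The part of $\pi$ over the non-atoms of $\mu$ is a martingale coupling from $\mu-\mu_a$ to $\rho:=\nu-\sum_j\eta_j$, hence $\mu-\mu_a\cx\rho$; on the other hand $\rho\cx\nu-\sum_j\beta_j=\mu-\mu_a$ by (iii), so $\rho=\mu-\mu_a$ by antisymmetry of $\cx$. Then $\sum_j\eta_j=\sum_j\beta_j$, and since the nonnegative numbers $\int_{\R}(t-a)^+\eta_j(\d t)-\int_{\R}(t-a)^+\beta_j(\d t)$ sum to zero for every $a\in\R$ (using $\eta_j\gcx\beta_j$ termwise), each vanishes, which forces $\eta_j=\beta_j$ for all $j$. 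Thus the atomic kernels are pinned down, while the non-atomic part is a martingale coupling from $\mu-\mu_a$ to itself, hence the identity coupling (as $\E[(Y-X)^2]=0$). Therefore $\pi=\pi^\star$.

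For $(i)\Rightarrow(iii)$, I would argue the contrapositive, most cleanly via the irreducible decomposition of $\mu\cx\nu$ in the sense of \cite{BJ16}. Every martingale coupling respects this decomposition, so $\pim$ is a singleton if and only if, on each irreducible component $(a_k,b_k)$, the restricted martingale coupling is unique. When the target $\nu|_{[a_k,b_k]}$ is atomless (which it is here), the restricted coupling is unique precisely when the source $\mu|_{(a_k,b_k)}$ consists of a single atom $c_k\delta_{y_k}$: in that case all mass sits at $y_k$ and the kernel must equal $c_k^{-1}\nu|_{[a_k,b_k]}$, whereas otherwise the coupling admits a local perturbation. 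Using that $S^\nu(c_k\delta_{y_k})=\nu|_{[a_k,b_k]}$ (the shadow of a single atom saturates its irreducible component) and that $\mu=\nu$ off the components, this translates exactly into (iii): the atomic part of $\mu$ is $\sum_k c_k\delta_{y_k}$, its shadows $\nu|_{[a_k,b_k]}$ are mutually singular because the components are disjoint, and $\mu-\mu_a$ is the off-component part $\nu-\sum_k\nu|_{[a_k,b_k]}$. An alternative, decomposition-free route is to run the $(iii)\Rightarrow(i)$ analysis in reverse: failure of (iii) means the target $\nu^\star:=\sum_j\beta_j+(\mu-\mu_a)$ of $\pi^\star$ (which always satisfies $\mu\cx\nu^\star\cx\nu$) is strictly dominated by $\nu$ in convex order, and this slack must surface either as a non-atomic part carrying $\mu-\mu_a$ to a measure strictly larger than itself in $\cx$, or as an atom competing with $\mu-\mu_a$ or with another atom for a region of $\nu$; either way one produces a second martingale coupling.

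I expect the converse direction to be the main obstacle, and the crux of it---needed in both routes above---is the lemma that a martingale transport whose source is atomless and whose target strictly dominates it in convex order is never unique (equivalently: the restricted coupling on a nontrivial irreducible component with atomless target is non-unique unless the source is a single atom). Establishing this amounts to exhibiting an explicit local modification of a given martingale kernel, and then keeping careful track of how such modifications interact across the atomic and non-atomic parts of $\mu$. The remaining implications are comparatively routine once the minimality characterization of the shadow is in hand.
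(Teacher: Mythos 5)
Your treatment of $(i)\Leftrightarrow(ii)$ is correct (and essentially the same as the paper's), and your $(iii)\Rightarrow(i)$ argument is sound and a genuine alternative: you work directly with the disintegration and show that the atomic kernels are forced to equal the shadows $\beta_j$, whereas the paper routes through Proposition~\ref{prop:shadowpasting2}, verifying that shadows of any mutually singular decomposition of~$\mu$ are mutually singular. Both are fine; one minor presentational point in yours is that the chain $\rho\cx\nu-\sum_j\beta_j$ should be attributed to $\sum_j\beta_j\cx\sum_j\eta_j$ together with the fact that subtraction from~$\nu$ reverses $\cx$ (for the test functions $x\mapsto(x-a)^+$, using finiteness of $\int(x-a)^+\,\d\nu$), not to~(iii) alone.

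The genuine gap is in $(i)\Rightarrow(iii)$ (equivalently $(ii)\Rightarrow(iii)$). You correctly identify the crux---a martingale transport with atomless source $\mu$ and target $\nu\ne\mu$ is never unique---but you leave it unproved, and it is precisely the content of this implication in the atomless case, so asserting it as a lemma does not reduce the problem. You also need a prior step showing that uniqueness forces the shadows $S^\nu(a_j\delta_{x_j})$ to occupy disjoint intervals, which your sketch does not address. The paper handles both: a dedicated lemma (Lemma~\ref{5}) uses the associativity of shadows (Lemma~\ref{ass}) and Lemma~\ref{atom} to show that under uniqueness, $S^{\nu-S^\nu(\gamma_1)}(\gamma_2)=S^\nu(\gamma_2)$ for any pair of atoms $\gamma_1,\gamma_2\leq\mu$, hence the shadows are restrictions of~$\nu$ to disjoint intervals. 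One then removes $\mu_a$ and its shadow and, for atomless $\mu\ne\nu$, locates an interval $[a,b]$ with $\mu([a,b])>\nu([a,b])$ plus small side-intervals carrying positive mass; the minimality property of the shadow then forces the shadow of $\mu|_{[a,b]}$ to absorb mass to one side, and taking shadows in the two possible orders produces two distinct MMTs. This is the explicit local perturbation your sketch gestures at but does not construct. Finally, your ``decomposition-free'' alternative has a definitional problem: the measure $\nu^\star:=\sum_j\beta_j+(\mu-\mu_a)$ need not be a sub-measure of~$\nu$ (the shadows $\beta_j$ can overlap in general), and the claim $\nu^\star\cx\nu$ ``always'' is not justified---without~(iii) the relation $\mu-\mu_a\cx\nu-\sum_j\beta_j$ (or even the non-negativity of $\nu-\sum_j\beta_j$) can fail.
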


\begin{remark}\label{remark:referee}
    As kindly pointed out by an anonymous referee, a further equivalent statement for Theorem \ref{th:uniq} can be formulated using
    the concept of irreducible components. 
For probability measures $\mu,\nu$ on $\R$ satisfying $\mu\cx\nu$, we let $u_\mu:\R\to\R,\,x\mapsto \int_\R|y-x|\mu(\d y)$ be the potential function of $\mu$, and similarly define $u_\nu$. Let $(I_k)_{1\leq k\leq N}$ be the (open) components of $\{u_\mu<u_\nu\}$ where $N\in\N\cup\{\infty\}$, and let $I_0=\R\setminus\bigcup_{k\geq 1}I_k$. Define $\mu_k=\mu|_{I_k}$, so that $\mu= \sum_{k\geq0}\mu_{k}$; this is called the irreducible decomposition of $\mu$ (which depends on $\nu$). By Theorem A.4 of \cite{BJ16}, there exists a unique decomposition $\nu=\sum_{k\geq0}\nu_{k}$ such that $\mu_0=\nu_0$ and $\mu_k\cx\nu_k$ for all $k$, and any $\pi\in\pim$ transports~$\mu_{k}$ to~$\nu_{k}$ for $k\in\N$ and $\mu_{0}$ to $\nu_{0}$  via the identity transport.
    Then we have the following equivalent condition for uniqueness of the MT:
    \begin{enumerate}
        \item[(iv)] Each $\mu_k,~k\in\N$ in the irreducible decomposition of $\mu$ is concentrated on a singleton. 
    \end{enumerate}
Indeed, (iv) implies the MT on each irreducible component is unique, and hence (i); the structure (iii) implies that $\mu_k=a_k\delta_{x_k},~k\in\N$ and $\mu_0=\nu-\mu_a$ define the irreducible decomposition of $\mu$, implying (iv).
The more general irreducible decomposition for probability measures on $\R^d$ instead of $\R$ will be discussed in Section \ref{sec:concluding}.
\end{remark}
   
 As a special case of Theorem~\ref{th:uniq}, if $\mu$ and $\nu$ are both atomless, uniqueness is equivalent to $\mu=\nu$. A nontrivial example with uniqueness is illustrated in Figure~\ref{fig:4}.
 
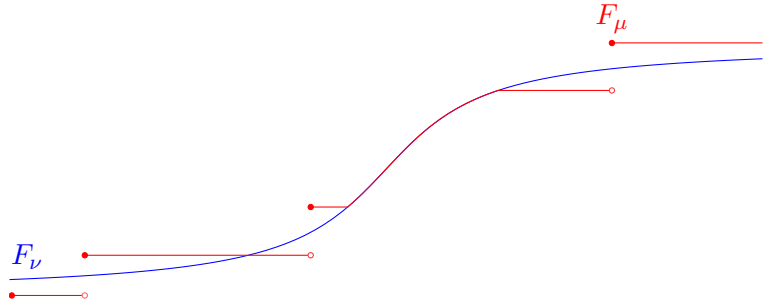
\begin{figure}[hbtp]\begin{center} 
\begin{tikzpicture}[scale=1]
\centering
\begin{axis}[width=.7\textwidth,
      height=0.35\textwidth,
  axis lines=none,%
  domain=-10:10,
  xmin=-5, xmax=5,
  ymin=-0.04, ymax=3.9,
  samples=80,
]
 
  \addplot[name path=p,blue,domain={-5:5}]
    {rad(atan(x))+1.5708}node[pos=0.022,above]{$F_\nu$};
    \addplot[red,domain={-5:-4}]{0};
    \addplot[red,domain=-4:-1]{0.5};
    \addplot[red,domain=-1:-0.5]{1.1};
    \addplot[red,domain=3:5]{3.1416}node[pos=0.001,above]{$F_\mu$};
\addplot[red,domain=-0.5:1.5]{rad(atan(x))+1.5708};
    \addplot[red,domain=1.5:3]{2.55};
    
    \filldraw[red] (3,4) circle (1pt);
    \filldraw[color=red!60, fill=red!0] (100,4) circle (1pt);
    
    \filldraw[red] (100,54) circle (1pt);
    \filldraw[color=red!80, fill=red!0] (400,54) circle (1pt);
    
    \filldraw[red] (400,114) circle (1pt);
    \filldraw[red] (800,318) circle (1pt);
    \filldraw[color=red!80, fill=red!0] (800,259) circle (1pt);
    
\end{axis}
\end{tikzpicture}\end{center}
\caption{Distribution functions of $\mu,\nu$  where the MMT (and MT) from $\mu$ to $\nu$ is unique} 
\label{fig:4}
\end{figure}

We conclude with simple examples illustrating subtleties that can arise when~$\nu$ is not atomless.

\begin{example}[MT exists; MMT does not] 
  Let $\mu$ and $\nu$ be two-point distributions satisfying $\mu\cx \nu$. Then there is a unique MT, as there is a unique distribution on two distinct points with a given mean. On the other hand, there is no  MMT unless $\mu=\nu$. In general, if $\mu,\nu$ are discrete and $\mathrm{card}(\cdot)$ denotes the cardinality of the support, the existence of an MMT implies $(2\,\mathrm{card} ((\mu -\nu)_+))\vee \mathrm{card}(\mu) \le \mathrm{card}(\nu)$.
\end{example}

\begin{example}[MMT is unique; MT is not] \label{ex:uniq}
Let $\mu$ be uniform on $\{2,5\}$ and $\nu$ be uniform on $\{0,3,4,7\}$.
   The unique MMT is given by  transporting $\{2\}$ to $\{0,4\}$
   and $\{5\}$ to $\{3,7\}$, 
   while it is easy to see that there exist many MTs. 
   \end{example}

\section{Applications and further properties}\label{se:applications}

\subsection{Refinement of Strassen's theorem}

The celebrated Strassen's theorem (\cite[Theorem 8]{S65}) shows that if two random variables $X$ and $Y$ satisfy $X\le_{\rm cx} Y$, then we can build $X'\laweq X$ and $Y'\laweq Y$ on another probability space such that $X'=\E[Y'|X']$. %
 Theorem~\ref{thm1} gives rise to the following refinement where $X'$ is built on the original space supporting~$Y$ and there is no need for an auxiliary random variable $Y'$. 

\begin{theorem}[Refinement of Strassen's theorem]\label{th:refinedStrassen} Let $X\cx Y$ be real-valued random variables on an atomless probability space $(\Omega,\F,\P)$. There exists a random variable $X'$ on $(\Omega,\F,\P)$ satisfying $X'\laweq X$ and $X'=\E[Y|X']$.
\end{theorem}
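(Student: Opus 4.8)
The plan is to derive this as a fairly direct consequence of the existence result Theorem~\ref{thm1}. Set $\mu := \mathrm{Law}(X)$ and $\nu := \mathrm{Law}(Y)$, so that $\mu \cx \nu$, and let $\pi \in \pim$ be a martingale coupling supported on $T_{\rm rg} \cup T_{\rm atom}$ with $T_{\rm rg} = \{(h(y),y) : y \in \R\}$, as furnished by Theorem~\ref{thm1}. Disintegrating $\pi(\d x, \d y) = \kappa(y, \d x)\,\nu(\d y)$ with respect to its second marginal~$\nu$, a Fubini argument together with the support condition shows that $\kappa(y, \cdot) = \delta_{h(y)}$ for $\nu$-almost every $y$ outside the (at most countable) set $D$ of atoms of~$\nu$; for $y \in D$, $\kappa(y, \cdot)$ is merely some probability measure with barycenter~$y$. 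It then suffices to realize a random variable $X'$ on $(\Omega, \F, \P)$ with $\mathrm{Law}(X', Y) = \pi$, because $\pi \in \M(\mu,\nu)$ immediately yields $X' \laweq X$ (first marginal~$\mu$) and $X' = \E[Y \mid X']$ (martingale property of~$\pi$).

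To build $X'$: on the event $\{Y \notin D\}$ set $X' := h(Y)$, which requires no additional randomness. For each atom $y_i \in D$, the set $A_i := \{Y = y_i\}$ has positive probability; being a positive-measure subset of the atomless space $(\Omega,\F,\P)$ it is itself atomless, hence supports a random variable $X'_i$ with conditional law $\mathrm{Law}(X'_i \mid A_i) = \kappa(y_i, \cdot)$ (e.g.\ by pushing a uniform random variable through the quantile function of $\kappa(y_i,\cdot)$). Gluing along the countable measurable partition $\{\,\{Y \notin D\}, A_1, A_2, \dots\,\}$ defines a measurable $X'$ on~$\Omega$, and evaluating $\E[\phi(X', Y)]$ blockwise over this partition gives $\mathrm{Law}(X', Y) = \pi$, which finishes the proof. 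When $\nu$ is atomless, the construction collapses to $X' = h(Y)$ with no auxiliary step at all.

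There is no serious obstacle here once Theorem~\ref{thm1} is in hand; the one point that genuinely uses a hypothesis is the handling of the atoms of~$\nu$, which is exactly where the atomlessness of $(\Omega,\F,\P)$ enters---it lets us reproduce the conditional laws $\kappa(y_i,\cdot)$ on the original probability space. That this hypothesis cannot be dropped is clear from the two-point example in the excerpt, where the unique martingale coupling genuinely spreads mass and so no $X'$ of the prescribed law exists unless there is extra randomness beneath each level set $\{Y = y_i\}$. The remaining verifications---the Fubini identification of~$\kappa$ off the atoms, measurability of the glued $X'$, and the blockwise computation of the joint law---are routine.
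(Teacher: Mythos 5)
Your proposal is correct and follows essentially the same route as the paper: both take the coupling $\pi$ supported on $T_{\rm rg}\cup T_{\rm atom}$ from Theorem~\ref{thm1}, set $X'=h(Y)$ off the atoms of $\nu$, and on each atom level set $\{Y=y_i\}$ use atomlessness of $(\Omega,\F,\P)$ to generate an auxiliary uniform and push it through the conditional quantile function of $\kappa(y_i,\cdot)$, then glue across the countable partition. The only cosmetic difference is that you phrase the off-atom step via a Fubini identification of the disintegration kernel, whereas the paper defines $X'$ directly; the substance is identical.
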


\begin{proof}
Let $\{y_n: n\in I\}\subseteq\R$ be the atoms of the distribution of $Y$, where $I$ is a countable set. As $(\Omega,\F,\P)$ is atomless, we can construct for each $n\in I$ a uniform random variable $U_{y_n}$ on $\{Y=y_n\}$ equipped with the restrictions of $\F$ and $\P$.  
It suffices to construct a random variable $X'$ that is $\sigma(Y,U_{y_n},n\in I)$-measurable such that $X'=\E[Y|X']$. By Theorem~\ref{thm1}, there exists a coupling $\pi$ of $X,Y$ supported on the union of a reverse graph $\{(h(y),y):y\in \R\}$ and $\bigcup_{n\in I}\{(x,y_n):x\in\R\}$. Let $F_{y_n}$ be the cdf of the conditional distribution of $\pi$ given $Y=y_n$ and let $F_{y_n}^\la$ denote its left-continuous inverse. We define
\begin{align*}
    X'(\omega):=\begin{cases}
    h(Y(\omega))&\text{ if }\omega\not\in \bigcup_{n\in I}\{Y=y_n\};\\
    F_{y_n}^\la(U_{y_n}(\omega))&\text{ if }\omega\in \{Y=y_n\}\text{ for some }n\in I.
    \end{cases}
\end{align*}
Then $X'$ is $\sigma(Y,U_{y_n},n\in I)$-measurable and the joint distribution of $(X',Y)$ is $\pi$.
\end{proof}

\begin{remark}\label{StrassenAndExistence}
Theorem~\ref{thm1} implies the existence of an MMT when the second marginal~$\nu$ is atomless. This statement can also be recovered from Theorem~\ref{th:refinedStrassen} by taking $\F=\sigma(Y)$, so that $X'$ must be a function of~$Y$. 
\end{remark}

A different way of framing those relations is to introduce a partial order on $\mathcal{P}(\R)$ via MMT. Noting that the convex order can be defined as $\mu\le_{\rm cx}\nu \Leftrightarrow \pim\neq\emptyset$, let us write  $\mu\mb\nu$ if $\pib\neq\emptyset$. This is indeed a partial order.

\begin{lemma}
 The binary relation $\mb$ is a partial order on $\mathcal P(\R)$. Moreover, $\mb$ implies $\le_{\rm cx}$.
 \end{lemma}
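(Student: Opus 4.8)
The plan is to verify the three defining properties of a partial order, together with the implication $\mb\Rightarrow\,\cx$. The last one is immediate: every MMT is in particular a martingale transport, so $\pib\subseteq\pim$, and hence $\pib\neq\emptyset$ forces $\pim\neq\emptyset$, which by Strassen's theorem is exactly $\mu\cx\nu$. Reflexivity is equally immediate: the coupling $(\Id_{\R},\Id_{\R})_{\#}\mu$ is a martingale transport of $\mu$ onto itself of the backward-Monge form $(f,\Id_{\R})_{\#}\mu$ with $f=\Id_{\R}$, so $\mu\mb\mu$. For antisymmetry, suppose $\mu\mb\nu$ and $\nu\mb\mu$; by the implication just noted we get $\mu\cx\nu$ and $\nu\cx\mu$, and since $\cx$ is itself a partial order on $\cP(\R)$ (the convex functions $x\mapsto|x-a|$, $a\in\R$, separate measures with finite first moment), this yields $\mu=\nu$.

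The only step with real content is transitivity. Assume $\mu\mb\nu$ and $\nu\mb\rho$, and pick MMTs $\pi=(f,\Id_{\R})_{\#}\nu\in\pib$ and $\pi'=(g,\Id_{\R})_{\#}\rho\in\M_M(\nu,\rho)$ with Borel maps $f,g\colon\R\to\R$. I would realize all three coordinates on one space: take $Z\sim\rho$, set $Y:=g(Z)$ and $X:=f(Y)=(f\circ g)(Z)$. Then the law of $(X,Y)$ is $\pi$ and the law of $(Y,Z)$ is $\pi'$; in particular $X\sim\mu$, $Z\sim\rho$, and the law $\sigma$ of $(X,Z)$ equals $((f\circ g),\Id_{\R})_{\#}\rho$, i.e.\ it is of backward-Monge form. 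It remains to check $\E_\sigma[Z\mid X]=X$. The key point is that $X=f(Y)$ is $\sigma(Y)$-measurable, so $\sigma(X)\subseteq\sigma(Y)$ and the tower property gives $\E[Z\mid X]=\E[\E[Z\mid Y]\mid X]=\E[Y\mid X]=X$, using $\E[Z\mid Y]=Y$ (the martingale property of $\pi'$) and $\E[Y\mid X]=X$ (that of $\pi$). Thus $(X,Z)$ is a martingale transport, it is backward Monge, and hence $\mu\mb\rho$.

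I do not expect a genuine obstacle here. The one point worth flagging is that for general, non-Monge martingale transports, composing couplings requires a gluing construction with conditional independence given the intermediate marginal in order to preserve the martingale property; in the backward-Monge setting this is automatic, since every coordinate is a deterministic function of the final one, so no gluing lemma is needed and the tower property closes the argument directly.
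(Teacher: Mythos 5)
Your proof is correct and follows essentially the same route as the paper: deduce $\mb\Rightarrow\cx$ from $\pib\subseteq\pim$ and Strassen, get antisymmetry from antisymmetry of $\cx$, and prove transitivity by composing the two backward Monge maps and applying the tower property of conditional expectation. The only (minor) difference is that you spell out reflexivity via the identity coupling, which is slightly more careful than the paper's terse remark; otherwise the arguments coincide.
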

 \begin{proof}
 Clearly $\mb$ implies $\le_{\rm cx}$, hence reflexivity and antisymmetry of~$\mb$ follow from those of~$\le_{\rm cx}$.
 To show transitivity, let $\eta\mb\mu$ and $\mu\mb\nu$. By definition, there exist functions~$g$ and~$f$ such that given $Y\lawis \nu$ and $X\lawis \mu$, we have $\E[Y|f (Y)]=f(Y)\lawis\mu$ and $\E[X|g (X)]=g(X)\lawis\eta$. In particular, setting $X:=f(Y)$,
  $$
 \E[Y|g\circ f (Y)]=   \E\left[ \E[Y|f (Y)] |g\circ f(Y)\right] = \E[X|g(X)]=g(X)=g\circ f (Y),
 $$
 showing that $g\circ f$ is an MMT for $(\eta,\nu)$.
 \end{proof}

\begin{proposition}\label{pr:partialOrder}
  Let $\nu \in \mathcal P(\R)$ and $Y\lawis  \nu$. Then
  \begin{align*}
    \{\mu\in\cP(\R): \mu\mb\nu\} = \big\{\mbox{law of }\E[Y|f(Y)]: \, f\mbox{ measurable}\big\}.
  \end{align*} 
  If $\nu$ is atomless, then furthermore
    \begin{align*}
    \{\mu\in\cP(\R): \mu\mb\nu\} = \big\{\mbox{law of }\E[Y|X]: \, X\in L^{0}\big\}=\{\mu\in\cP(\R): \mu\le_{\rm cx}\nu\}
  \end{align*} 
  where $L^{0}$ is the set of random variables on the same space as~$Y$.
\end{proposition}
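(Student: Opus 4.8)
The plan is to prove the first identity for an arbitrary second marginal~$\nu$, and then obtain the remaining two identities in the atomless case from a three-term cycle of inclusions invoking Theorem~\ref{thm1}. For the first identity, I would start by unwinding the definition of $\mb$: one has $\mu\mb\nu$ exactly when there is a measurable $f:\R\to\R$ such that, with $Y\lawis\nu$, both $f(Y)\lawis\mu$ and $\E[Y\mid f(Y)]=f(Y)$ a.s.\ (that is, $(f,\Id_{\R})_{\#}\nu\in\pib$). The inclusion ``$\subseteq$'' of the identity is then immediate, since for such an $f$ the variable $\E[Y\mid f(Y)]=f(Y)$ has law~$\mu$. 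For ``$\supseteq$'', I would fix an arbitrary measurable $f$, let $\mu$ be the law of $\E[Y\mid f(Y)]$, and note that $\E[Y\mid f(Y)]$, being $\sigma(f(Y))$-measurable, equals $h(Y)$ for some measurable $h$ (a function of $f(Y)$, hence of $Y$), so that $\sigma(h(Y))\subseteq\sigma(f(Y))$. The tower property then gives $\E[Y\mid h(Y)]=\E[\,\E[Y\mid f(Y)]\mid h(Y)\,]=\E[h(Y)\mid h(Y)]=h(Y)$ while $h(Y)\lawis\mu$, so $(h,\Id_{\R})_{\#}\nu\in\pib$ and $\mu\mb\nu$. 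This is the same idempotency that underlies transitivity of $\mb$: an arbitrary map collapses to a fixed point of $g\mapsto\E[Y\mid g(Y)]$ without changing the induced first marginal.

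For the atomless case I would then close the cycle
\begin{align*}
\{\mu\in\cP(\R):\mu\cx\nu\}\ &\subseteq\ \{\mu\in\cP(\R):\mu\mb\nu\}\\
&\subseteq\ \{\text{law of }\E[Y\mid X]:X\in L^0\}\ \subseteq\ \{\mu\in\cP(\R):\mu\cx\nu\}.
\end{align*}
The first inclusion is Theorem~\ref{thm1}: for $\mu\cx\nu$ with $\nu$ atomless it yields $\pi\in\pim$ supported on $T_{\rm rg}\cup T_{\rm atom}$, and since $\nu$ is atomless $T_{\rm atom}$ is $\pi$-null (its projection to the second coordinate is a $\nu$-nullset), so $\pi$ lives on a reverse graph, i.e.\ $\pi\in\pib$. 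The second inclusion is the identity just established, together with the trivial observation that $\E[Y\mid f(Y)]$ is an instance of $\E[Y\mid X]$ with $X=f(Y)\in L^0$. The third inclusion is conditional Jensen: for $X\in L^0$ one has $\E|\E[Y\mid X]|\le\E|Y|<\infty$, so the law $\mu$ of $\E[Y\mid X]$ is in $\cP(\R)$, and for every convex $\phi$ with $\int\phi\,\d\nu<\infty$ (the case $\int\phi\,\d\nu=\infty$ being trivial) we have $\phi(\E[Y\mid X])\le\E[\phi(Y)\mid X]$ a.s., hence $\int\phi\,\d\mu\le\int\phi\,\d\nu$ and $\mu\cx\nu$. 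Composing the three inclusions forces the three sets to coincide.

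The only genuine input is Theorem~\ref{thm1}, which may be taken for granted here; everything else is routine manipulation of conditional expectations. The step deserving attention is the ``$\supseteq$'' direction of the general-$\nu$ identity: its right-hand side ranges over laws of $\E[Y\mid f(Y)]$ for \emph{arbitrary} measurable $f$, not merely over $f$ with $\E[Y\mid f(Y)]=f(Y)$, and the reduction from $f$ to $h$ above---legitimate because $\sigma(h(Y))\subseteq\sigma(f(Y))$---is precisely what reconciles the two descriptions.
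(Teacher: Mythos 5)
Your proof is correct and follows essentially the same route as the paper: the first identity is established exactly as in the paper via the tower property (writing $\E[Y|f(Y)]=h(Y)$ and observing $\E[Y|h(Y)]=h(Y)$), and for the atomless case you close a three-term cycle of inclusions using conditional Jensen plus an existence result. The only cosmetic difference is that you invoke Theorem~\ref{thm1} directly to get $\mu\cx\nu\Rightarrow\mu\mb\nu$ whereas the paper cites Theorem~\ref{th:refinedStrassen}; as noted in Remark~\ref{StrassenAndExistence} the two are interchangeable when~$\nu$ is atomless.
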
 

\begin{proof}
  The second part follows directly from Theorem~\ref{th:refinedStrassen}. For the first part, the inclusion ``$\subseteq$'' is immediate from the definition of $\mb$. To see ``$\supseteq$'', let $\mu  \lawis Z:= \E[Y|f(Y)]$ for some measurable function~$f$. As~$Z$ is $\sigma(f(Y))$-measurable, we can write $Z=h(Y)$ for some measurable function~$h$. The tower property  of conditional expectation gives  $Z=\E[\E[Y|f(Y)]|Z] =\E[Y|Z]$. Therefore, $h(Y)=\E[Y|h(Y)]$, showing that~$h$ is the Monge map as required in the definition of $\mu\mb\nu$.
\end{proof}

\subsection{Backward deterministic martingales}
  
Theorem~\ref{thm1} gives rise to the remarkable class of backward deterministic martingales.

\begin{definition}\label{de:backwDetProc}
  A stochastic process $(X_n)_{n\in \N}$ is \emph{backward deterministic} if $(X_j)_{j=1}^n$ is $\sigma(X_n)$-measurable for all $n\in\N$.
\end{definition} 

In that case, $(X_n)_{n\in \N}$ is indeed a ``deterministic'' process if we go backward in time: the path $\{X_j,\, 1\leq j \leq n\}$ is deterministic given $X_n$. Equivalently, $\sigma(X_j)$ is non-decreasing in $n$.
As a direct consequence, a backward deterministic process $(X_n)_{n\in \N}$ is Markovian; in fact, it has perfect memory in the sense that its time-$n$ value records all its history up to time~$n$. While this may seem to be a fairly rare property, the following consequence of  Theorem~\ref{thm1} shows that the class of backward deterministic martingales is rich enough to mimic (in the sense of \cite{Gyongy.86}) any given martingale with continuous marginals. 

\begin{corollary}\label{coro:1}
Given any martingale $(Y_n)_{n\in \N}$ 
with atomless marginals, there exists a backward deterministic martingale $(X_n)_{n\in \N}$ such that $X_n\laweq Y_n$ for all $n\in \N$.
\end{corollary}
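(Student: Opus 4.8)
The plan is to build the backward deterministic martingale $(X_n)_{n\in\N}$ inductively in~$n$, at each step applying Theorem~\ref{thm1} (or equivalently the refined Strassen's theorem, Theorem~\ref{th:refinedStrassen}) to the one-period transition of $(Y_n)$. The key observation is that for a martingale $(Y_n)_{n\in\N}$ the marginals $\nu_n:=\mathrm{law}(Y_n)$ satisfy $\nu_n\cx\nu_{n+1}$, so Theorem~\ref{thm1} is applicable to each consecutive pair. First I would set $X_1$ to be any random variable with law $\nu_1$ on a suitable atomless probability space (e.g.\ take the canonical space of a sequence of independent uniforms to have enough randomness available). Then, given that $X_n$ has been constructed with $X_n\laweq Y_n$, I would apply Theorem~\ref{thm1} to the pair $(\nu_n,\nu_{n+1})$: since $\nu_n\cx\nu_{n+1}$ and $\nu_{n+1}$ is atomless, there is an MMT, i.e.\ a Borel map $h_{n+1}:\R\to\R$ and a law $\pi_{n+1}=(h_{n+1},\Id_\R)_{\#}\nu_{n+1}\in\M(\nu_n,\nu_{n+1})$ with $h_{n+1}(Y)=\E[Y|h_{n+1}(Y)]$ for $Y\lawis\nu_{n+1}$. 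I would then realize $X_{n+1}$ as a variable with law $\nu_{n+1}$ such that $h_{n+1}(X_{n+1})=X_n$ almost surely; concretely, since $(\Omega,\F,\P)$ is atomless and carries spare randomness, I can disintegrate $\nu_{n+1}$ over the map $h_{n+1}$ relative to $\nu_n$ and use an independent uniform to sample $X_{n+1}$ from the conditional law of $Y_{n+1}$ given $h_{n+1}(Y_{n+1})=X_n$. This guarantees $X_{n+1}\laweq Y_{n+1}$ and $X_n=h_{n+1}(X_{n+1})$, hence $\sigma(X_n)\subseteq\sigma(X_{n+1})$, which iterated gives that $(X_j)_{j=1}^n$ is $\sigma(X_n)$-measurable, i.e.\ backward determinism.

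It remains to check the martingale property $\E[X_{n+1}\mid X_1,\dots,X_n]=X_n$. By the backward deterministic structure just established, $\sigma(X_1,\dots,X_n)=\sigma(X_n)$, so this reduces to $\E[X_{n+1}\mid X_n]=X_n$. But $X_n=h_{n+1}(X_{n+1})$ and $(X_n,X_{n+1})\lawis\pi_{n+1}$, which is a \emph{martingale} transport from $\nu_n$ to $\nu_{n+1}$; hence $\E[X_{n+1}\mid X_n]=\E[X_{n+1}\mid h_{n+1}(X_{n+1})]=h_{n+1}(X_{n+1})=X_n$ by the defining MMT relation. This completes the martingale verification.

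The step I expect to require the most care is the measure-theoretic bookkeeping in the inductive construction: one must arrange that all the $X_n$ live on a single probability space and that at stage $n+1$ there is enough independent randomness left to sample $X_{n+1}$ from the required conditional kernel of $Y_{n+1}$ given $h_{n+1}(Y_{n+1})$. This is the usual construction of a Markov chain from a consistent family of one-step kernels; the clean way is to work on $\Omega=[0,1]^{\N}$ with Lebesgue measure, use the first coordinate to build $X_1\lawis\nu_1$, and at each step $n+1$ use the $(n{+}1)$-st coordinate (independent of everything prior) together with a regular conditional distribution of $Y_{n+1}$ given $h_{n+1}(Y_{n+1})=x$ to define $X_{n+1}$ as a deterministic function of $X_n$ and that fresh uniform. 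One should note that $h_{n+1}$ may need to be modified on a $\nu_{n+1}$-null set so that the pushforward of the conditional kernel under $h_{n+1}$ is exactly $\delta_{x}$ for $\nu_n$-a.e.\ $x$, but this is precisely the kind of redefinition already flagged after Proposition~\ref{p1} and causes no difficulty. Everything else — consistency of laws, the convex-order chain $\nu_n\cx\nu_{n+1}$ coming from the martingale $(Y_n)$, and the two displayed identities above — is routine.
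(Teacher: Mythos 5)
Your proposal is correct and follows essentially the same route as the paper: decompose the given martingale into its sequence of consecutive marginal pairs $\nu_n\cx\nu_{n+1}$, apply Theorem~\ref{thm1} to each pair to get one-step MMTs, realize the chain of one-step transitions via fresh independent uniforms (the paper cites the inverse Rosenblatt transform), and read off backward determinism from the fact that each $\pi_{n+1}$ is concentrated on a reverse graph. Your explicit observation that $\sigma(X_1,\dots,X_n)=\sigma(X_n)$ collapses the martingale verification to the one-step MT property is a small but welcome elaboration of a point the paper leaves implicit.
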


\begin{proof}
Let $\mu_n$ be the distribution of $Y_n$ for $n\in \N$. Then $\mu_n\le_{\rm cx} \mu_{n+1}$, so that Theorem~\ref{thm1} provides a sequence $\pi_n\in \mathcal{M}_M(\mu_n,\mu_{n+1})$, $n\in \N$.
Let $U_n$, $n\in \N$ be a sequence of iid  random variables uniformly distributed on $[0,1]$.
We construct the sequence $(X_n)_{n\in \N}$ inductively as follows. First, let $X_1=g(U_1)$ where $g$ is the left quantile function of $\mu_1$; then $X_1\lawis \mu_1$.  
  For $n= 2,3,\dots,$ 
  let $X_{n}$ be such that $(X_{n-1},X_{n})\lawis \pi_{n-1}$ and $X_{n}$ is measurable with respect to 
  $(U_1,\dots,U_{n})$. Such a sequence can be constructed by the inverse Rosenblatt transform; see, e.g., \citet[Theorem~1.10]{R13}.
  Then $(X_n)_{n\in\N}$ is a martingale with the marginal distributions $\mu_n$, $n\in \N$.
  Moreover, since $(X_{n-1},X_{n})\lawis \pi_{n-1}$
  and $\pi_{n-1}$ is an MMT, $X_{n-1}$ is a function of $X_{n}$ for each $n\ge 2$. Applying this repeatedly, we see that $X_j$ is a function of $X_n$ for all $j=1,\dots,n$.
\end{proof}

The celebrated mimicking theorem of \cite{Gyongy.86} shows that the marginals of a (possibly non-Markovian) It\^o process can also be generated with a Markovian It\^o process. Here, in discrete time, we provide a mimicking martingale that is even backward deterministic. Of course, the relevant input of Corollary~\ref{coro:1} is a family of distributions increasing in convex order rather than the process~$(Y_{n})$. In that sense, it is a result about ``peacocks'' in the sense of \cite{HPRY11}. To the best of our knowledge, the class of backward deterministic martingales has not been discussed in the previous literature. A deeper investigation remains for future work; we limit ourselves to the following observation.

\begin{remark}
A backward deterministic  martingale  $(X_n)_{n\in\N}$ cannot be a Gaussian process, %
except for the trivial form $(c,\dots,c,Z,Z,\cdots)$ for some $c\in\R$ and Gaussian random variable $Z$. Indeed, suppose that $(X_n)_{n\in\N}$ is a backward deterministic  martingale and a centered Gaussian process. It is clear that the variance  $\sigma_n^2 $ of $X_n$ is increasing in $n$. Moreover, for $k<n$, $\E[X_nX_k]=\E[X_k^2]=\sigma_k^2$ since $(X_n)_{n\in\N} $ is a martingale.
As the centered Gaussian distribution with a given covariance is unique, we conclude that $X_k $ cannot be a function of $X_n$ unless $\sigma_k=\sigma_n$ or $\sigma_k=0$. Hence, for some $k_{0}\in \N$, it holds that $X_k=0$ for $k<k_{0}$ and $X_{k}=X_{k_{0}}$ for $k\ge k_{0}$. At a higher level, the joint distribution of a
backward martingale is concentrated on a set of Hausdorff dimension one (contrasting that a positive definite Gaussian vector is supported on the entire space).
\end{remark}

\subsection{MMTs as minimizers of generalized optimal transport}

In this section we characterize $\pib$ through a generalized optimal transport problem.
Starting with \cite{GRST17}, transport costs involving conditional distributions have been studied under the name of \emph{generalized} or \emph{weak} optimal transport.
Such problems have found manifold applications such as the geometric inequalities of \cite{GRST17} and the Brenier--Strassen theorem of \cite{GJ20}, and have counterparts to classic concepts such as the Kantorovich duality and cyclical monotonicity established by \cite{GRST17} and \cite{BBP19}. We refer to \cite{BP22} for a recent survey.

Fix $\mu\cx\nu$ with $\nu$ atomless. It will be convenient to use random vectors $(X,Y)$ instead of joint distributions; e.g., we abuse notation and write $(X,Y)\in \Pi(\mu,\nu)$. We first note that $\pib$ naturally arises through a two-stage optimization problem. The primary optimization is to minimize $\E[\E[Y-X|X]^2]$ over $\Pi(\mu,\nu)$, and its $\argmin$ is given by $\pim$. The secondary optimization is to minimize $\E[\E[Y-X|Y]^2]$, or equivalently $\E[\var[X|Y]]$, over $\pim$; here the $\argmin$ is $\pib$. This is a symmetric variant of the \emph{barycentric optimal transport cost} introduced by \cite{GRST17}. Extending this idea, the following result represents $\pib$ as the $\argmin$ of a class of generalized optimal transport problems over~$\Pi(\mu,\nu)$.

\begin{proposition}\label{prop:opt}
  Consider $\mu\cx \nu$ with~$\nu$ atomless. For any strictly convex $f,g:\R\to\R$,
 \begin{align}\label{eq:opt}
   \pib = \argmin_{(X,Y) \in \Pi(\mu,\nu)} \E\left[f(\E[Y|X]-X)-g(\E[X|Y])\right].
 \end{align}
\end{proposition}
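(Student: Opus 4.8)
The plan is to prove the two inclusions in \eqref{eq:opt} separately, exploiting the decomposition of the objective into a "martingale part" $\E[f(\E[Y|X]-X)]$ and a "dispersion part" $-\E[g(\E[X|Y])]$. First I would record two elementary facts from strict convexity and Jensen's inequality: for any coupling $(X,Y)\in\Pi(\mu,\nu)$, since $\E[Y|X]-X$ has mean zero, $\E[f(\E[Y|X]-X)]\ge f(0)$, with equality if and only if $\E[Y|X]=X$ a.s., i.e. if and only if $(X,Y)\in\pim$; and for the second term, observe that $\E[X|Y]$ is a (conditional-)average of $X$ given $Y$, and that over all couplings with the marginal $\nu$ fixed for $Y$, the law of $\E[X|Y]$ is $\cx$-maximized — in the sense of being "least dispersed around its mean $\E[X|Y]$" — exactly when $X$ is itself a function of $Y$. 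Concretely, for fixed $\nu$, the quantity $\E[g(\E[X|Y])]$ is maximized (over all $X$ with a given law $\mu$, say, or even just with a given mean) when $\E[X|Y]=X$, again by Jensen applied conditionally on $Y$: $g(\E[X|Y])\le \E[g(X)|Y]$ so $\E[g(\E[X|Y])]\le \E[g(X)]=\int g\,\d\mu$, a constant, with equality iff $X=\E[X|Y]$ a.s.

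This gives the easy inclusion: if $(X,Y)\in\pib$ then $\E[Y|X]=X$ so the first term equals its minimum $f(0)$, and $X=h(Y)$ so $\E[X|Y]=X$ and the second term equals $-\int g\,\d\mu$, its maximal negative contribution; hence every element of $\pib$ attains the value $f(0)-\int g\,\d\mu$, which is a lower bound for the objective on all of $\Pi(\mu,\nu)$ by the two facts above, so $\pib\subseteq\argmin$. For the reverse inclusion, suppose $(X,Y)$ is a minimizer. Since $\pib$ is nonempty (Theorem~\ref{thm1}, as $\nu$ is atomless), the minimum value is $f(0)-\int g\,\d\mu$. But then both inequalities must be equalities: $\E[f(\E[Y|X]-X)]=f(0)$ forces $\E[Y|X]=X$ by strict convexity of $f$, so $(X,Y)\in\pim$; and $\E[g(\E[X|Y])]=\int g\,\d\mu$ forces $\E[X|Y]=X$ a.s. by strict convexity of $g$, so $X$ is $\sigma(Y)$-measurable, i.e. $X=h(Y)$ for a Borel $h$. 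Together these say $(X,Y)\in\pib$, completing the proof.

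The one point that needs a little care — and which I expect to be the main (though modest) obstacle — is the equality case in the Jensen step for $g$: from $\E[g(\E[X|Y])]=\E[\E[g(X)|Y]]$ one must conclude $g(\E[X|Y])=\E[g(X)|Y]$ a.s. (since the integrands satisfy a pointwise inequality, equality of integrals gives a.s. equality of integrands), and then from strict convexity of $g$ deduce that the conditional law of $X$ given $Y$ is a Dirac mass $\nu$-a.s., hence $X=\E[X|Y]$. This uses that strict convexity makes $g$ affine on no nondegenerate interval, so $g(\E[\zeta])=\E[g(\zeta)]$ for a random variable $\zeta$ forces $\zeta$ constant; applying this to the conditional law $\zeta = \mathrm{Law}(X\mid Y=y)$ for $\nu$-a.e.\ $y$ gives the claim. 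The atomlessness of $\nu$ plays no role beyond guaranteeing $\pib\ne\emptyset$ so that the optimal value is the one computed; the argument is otherwise purely about the structure of the functional.
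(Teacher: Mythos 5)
Your proposal is correct and follows essentially the same route as the paper: decompose the objective into the two Jensen bounds $\E[f(\E[Y|X]-X)]\ge f(0)$ and $-\E[g(\E[X|Y])]\ge -\E[g(X)]$, note the resulting lower bound is coupling-independent and attained because $\pib\neq\emptyset$, and characterize the equality case via strict convexity as $\E[Y|X]=X$ together with $X=\E[X|Y]$ a.s. The only difference is that you spell out the equality case of conditional Jensen in slightly more detail, which the paper leaves implicit.
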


\begin{proof}
Recall from Theorem~\ref{thm1} that $\pib\neq\emptyset$, let $f,g:\R\to\R$ be strictly convex and $(X,Y) \in \Pi(\mu,\nu)$. Using the conditional Jensen's inequality and recalling that $\mu\cx\nu$ implies $\E[X]=\E[Y]$, 
\begin{align*} \E\left[f(\E[Y|X]-X)-g(\E[X|Y])\right] & \geq f\left(\E\left[\E[Y|X]-X\right]\right)-\E\left[\E[g(X)|Y]\right]\\&=f(\E[Y]-\E[X])-\E[g(X)]=f(0)-\E[g(X)].\end{align*}
Clearly, the right-hand side is independent of the coupling $(X,Y)\in \Pi(\mu,\nu)$. The above inequality is an equality if and  only if  $\E[Y-X|X]=0$ and $X$ is $\sigma(Y)$-measurable, or equivalently  $(X,Y)\in\pib$. \end{proof}

\begin{remark}
  For $f(x)=g(x)=x^{2}$, the generalized transport cost in~\eqref{eq:opt} is equivalent to
  \[
   \E\big[\E[Y|X]^2-\E[X|Y]^2-2\E[XY]\big].
  \]
  We note that this cost is not symmetric in $X$ and $Y$, and moreover, the term $-2\E[XY]$  is essential: one can check that $\pib$ does \emph{not} solve the problem of minimizing $ \E [\E[Y|X]^2-\E[X|Y]^2]$ unless $X$ is a constant.
\end{remark}

\section{Proofs of the main results}\label{P}

\subsection{Structure of the left-curtain transport \texorpdfstring{$\pi_{\rm lc}$}{}}\label{S}

In this subsection, we prove Proposition~\ref{p1}. Fix $\mu,\nu\in\cP(\R)$ with $\mu \le_{\rm cx} \nu$. 
 We first recall two properties of the left-curtain transport $\pi_{\rm lc}$. The first one is Theorem 1.5 of \cite{BJ16}.

\begin{lemma}[$\pi_{\rm lc}$ is left-monotone]\label{lm}
The left-curtain transport $\pi_{\rm lc}\in\pim$ satisfies $\pi_{\rm lc}(\Gamma)=1$, where $\Gamma\subseteq\R\times\R$ is a  left-monotone set; that is, whenever $(x,y^-),(x,y^+),(x',y')\in\Gamma$, it \emph{cannot} hold that
$$x<x'\quad\text{and} \quad y^-<y'<y^+.$$
Moreover, $\pi_{\rm lc}\in\pim$ is uniquely characterized by that property.
\end{lemma}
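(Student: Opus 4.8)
This statement is \citet[Theorem~1.5]{BJ16}. To prove it I would follow their shadow-based approach, in three stages: the discretely supported case, a limiting argument, and uniqueness.

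\textbf{Discrete first marginal.} Suppose first that $\mu=\sum_{i=1}^{n}a_{i}\delta_{x_{i}}$ with $x_{1}<\dots<x_{n}$. Using the associativity of shadows \citep[Theorem~4.8]{BJ16}, I would rewrite $\pi_{\rm lc}$ through the greedy recursion $\nu_{0}:=\nu$, $\theta_{i}:=S^{\nu_{i-1}}(a_{i}\delta_{x_{i}})$, $\nu_{i}:=\nu_{i-1}-\theta_{i}$: indeed $\sum_{j\le i}\theta_{j}=S^{\nu}(\mu|_{(-\infty,x_{i}]})$, so $\pi_{\rm lc}$ transports $a_{i}\delta_{x_{i}}$ onto $\theta_{i}$ for every $i$. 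The structural input is that the shadow of a single atom is an \emph{interval slice}: there are $l_{i}\le x_{i}\le r_{i}$ with $\theta_{i}$ agreeing with $\nu_{i-1}$ on the open interval $(l_{i},r_{i})$ and carrying at most partial atoms at $l_{i},r_{i}$, so that $\supp\nu_{i}\cap(l_{i},r_{i})=\emptyset$. With $\Gamma:=\bigcup_{i}(\{x_{i}\}\times\supp\theta_{i})$ as support set, left-monotonicity is then immediate: if $(x_{i},y^{-}),(x_{i},y^{+})\in\Gamma$ with $y^{-}<y^{+}$, then $(y^{-},y^{+})\subseteq(l_{i},r_{i})$, whereas any $(x_{j},y')\in\Gamma$ with $x_{j}>x_{i}$ has $y'\in\supp\theta_{j}\subseteq\supp\nu_{j-1}\subseteq\supp\nu_{i}$, hence $y'\notin(l_{i},r_{i})\supseteq(y^{-},y^{+})$.

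\textbf{General first marginal.} I would approximate $\mu$ weakly by finitely supported measures $\mu_{m}\cx\mu$ (for instance, concentrate the mass that $\mu$ assigns to each $1/m$-quantile block at that block's barycenter; this is a mean-preserving contraction, so $\mu_{m}\cx\mu\cx\nu$). Each $\pi_{\rm lc}^{(\mu_{m},\nu)}$ is left-monotone by the first stage, and the family is tight with uniformly integrable marginals, so a weak subsequential limit $\pi\in\pim$ with first marginal $\mu$ exists. By continuity of the map $\eta\mapsto S^{\nu}(\eta)$, the limit $\pi$ still transports $\mu|_{(-\infty,x]}$ to $S^{\nu}(\mu|_{(-\infty,x]})$ for every $x$, so $\pi=\pi_{\rm lc}$; and the ``forbidden triple'' property, suitably formulated via a closed support set, is preserved under weak limits. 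The bookkeeping needed with the strict inequalities in that last point is the one place where I would simply follow \citet{BJ16}.

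\textbf{Uniqueness.} Since $\pi_{\rm lc}$ is, by its definition, the unique element of $\pim$ transporting $\mu|_{(-\infty,x]}$ to $S^{\nu}(\mu|_{(-\infty,x]})$ for every $x$, it suffices to show that \emph{any} $\pi\in\pim$ supported on a left-monotone set already has this shadow property. Writing $\theta_{x}$ for the image of $\mu|_{(-\infty,x]}$ under such a $\pi$, the martingale property gives $\mu|_{(-\infty,x]}\cx\theta_{x}\le\nu$, hence $S^{\nu}(\mu|_{(-\infty,x]})\cx\theta_{x}$ by minimality of the shadow. The reverse relation --- that left-monotonicity forces $\theta_{x}$ to be itself $\cx$-minimal among $\{\eta:\mu|_{(-\infty,x]}\cx\eta\le\nu\}$ --- is the crux, and I expect it to be the main obstacle. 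It is the technical heart of \citet{BJ16} and is proved there via the potential-function description of shadows together with the observation that $\nu-\theta_{x}$, being the image of $\mu|_{(x,\infty)}$, cannot lie strictly between two destinations of a single source point $\le x$.
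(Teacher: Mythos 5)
You correctly note that this lemma is Theorem~1.5 of \citet{BJ16}; the paper itself supplies no proof and simply cites the reference, so there is no in-paper argument to compare your sketch against. Taken as a reconstruction of the cited proof, your outline is reasonable. The discrete-marginal stage is correct and complete: associativity of shadows reduces the picture to the interval-slice structure of a single atom's shadow, and the forbidden-triple condition then drops out of $\supp\nu_{i}\cap(l_{i},r_{i})=\emptyset$ exactly as you argue. The two places you explicitly defer to BJ16 are, however, precisely where the substance lies, and it is worth naming why. For the passage from discrete to general~$\mu$, the issue is more than bookkeeping: ``supported on a left-monotone set'' is a statement about the existence of a support set with a property mixing strict and non-strict inequalities, and such properties are not in general stable under weak limits of the measures (one can lose or gain boundary cases), so either a closed reformulation of left-monotonicity or an argument made directly from the shadow structure of $\pi_{\rm lc}$ is needed rather than a naive limit of support sets. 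For the uniqueness half, showing that left-monotonicity of a coupling $\pi\in\pim$ forces $\theta_{x}=S^{\nu}(\mu|_{(-\infty,x]})$ for all~$x$ is the technical core of BJ16's theorem; your one-sentence heuristic captures the right intuition (left-monotonicity constrains where $\nu-\theta_{x}$ can sit relative to the two legs emanating from points $\le x$) but is far from a proof. In short, this is a faithful road map of the cited argument with the two hard steps acknowledged and deferred, which is appropriate for a statement the paper itself only cites.
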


\begin{figure}[h!]  
\begin{center}

\begin{subfigure}[b]{0.3\textwidth}
\centering
\begin{tikzpicture}[scale=1.1]

\draw[gray, very thick] (-3.9,0)--(-0.9,0);
\draw[gray, very thick] (-3.9,-2)--(-0.9,-2);
\draw[very thick, ->](-3.2,0)--(-3.7,-2)node[pos=0.001,above]{$x$}node[pos=0.999,below]{$y^-$};
\draw[very thick, ->](-3.2,0)--(-1.2,-2)node[pos=0.999,below]{$y^+$};
\draw[very thick, ->](-1.7,0)--(-2.6,-2)node[pos=0.001,above]{\hspace{.17cm}$x'$}node[pos=0.999,below]{$y'$};

\end{tikzpicture}
\end{subfigure}
\caption{Forbidden configuration for left-monotonicity: the legs of a point $x'$ cannot step into the legs of another point $x$ to the left of $x'$.}\label{fig:lc}
\end{center}
\end{figure}
The second property is that, outside of $\mu$-atoms, $\pi_{\rm lc}$ is supported on the graphs of two functions (``legs'') over the first marginal (i.e., forward in time); cf.\ Corollary~1.6 of \cite{BJ16} and Theorem~1 of \cite{HN19}.

\begin{lemma}[Support of $\pi_{\rm lc}$]
There exist two functions $T_{\rm d},~T_{\rm u}:\R\to \R$ such that $\pi_{\rm lc}(R_{\rm legs}\cup R_{\rm atom})=1$, where
 \begin{enumerate}[(a)]
    \item $R_{\rm legs}$ is the union of the graphs of $T_{\rm d},~T_{\rm u}$ over the first marginal;
    \item $R_{\rm atom}=\{(x,y):\mu(\{x\})>0\}$.
\end{enumerate}
\label{su}
\end{lemma}

Define the densities 
\begin{align}d_\mu:=\frac{\d\mu}{\d(\mu+\nu) } \mbox{~~~~and~~~~} d_\nu:=\frac{\d\nu}{\d(\mu+\nu) }, \label{fg}\end{align} and denote by $\kappa_{x}(\d y)$ the disintegration of $\pi_{\rm lc}$ by $\mu$, or conditional distribution given the first marginal: $\pi_{\rm lc}(\d x,\d y)=\mu(\d x)\otimes\kappa_{x}(\d y)$.

\begin{lemma}\label{le:needSpace}
We have $d_\mu\leq d_\nu\ \mu$-a.e.\ on $\{x\in\R: \kappa_{x}=\delta_{x}\}$.
\end{lemma}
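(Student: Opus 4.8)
The statement asks us to show that wherever the left-curtain kernel is trivial, $\kappa_x = \delta_x$, the density $d_\mu$ cannot strictly exceed $d_\nu$. The plan is to argue by contradiction: suppose there is a Borel set $A$ with $\mu(A)>0$, $A \subseteq \{x : \kappa_x = \delta_x\}$, and $d_\mu > d_\nu$ on $A$. On the one hand, $\kappa_x = \delta_x$ for $x \in A$ means the coupling $\pi_{\rm lc}$ restricted to $A \times \R$ is carried by the diagonal, so it moves mass from $\mu|_A$ to $\mu|_A$ (viewed inside $\nu$). On the other hand, $d_\mu > d_\nu$ on $A$ is a local statement saying that $\mu$ has strictly more mass than $\nu$ on (parts of) $A$, so the mass that $\pi_{\rm lc}$ ``parks'' on the diagonal over $A$ cannot all fit under~$\nu$ there, and some of it must have arrived from points $x' \notin A$. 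The contradiction will come from the left-monotonicity property (Lemma~\ref{lm}): a diagonal atom at $x \in A$ receiving mass from some $x' < x$ together with the spreading behaviour forced by the convex-order/martingale constraint produces a forbidden configuration $x' < x$ with $y^- < x < y^+$.

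More concretely, first I would reduce to a clean local picture. Since $A \subseteq \{\kappa_x = \delta_x\}$, for $\mu$-a.e.\ $x \in A$ the point $x$ is in the support of~$\nu$ and the ``diagonal leg'' is active, so by Lemma~\ref{su} at least one of $T_{\rm d}(x), T_{\rm u}(x)$ equals~$x$. The key quantitative input is the defining property of $\pi_{\rm lc}$: it transports $\mu|_{(-\infty,x]}$ to its shadow $S^\nu(\mu|_{(-\infty,x]})$, and the shadow is the $\cx$-minimal measure $\eta \le \nu$ dominating $\mu|_{(-\infty,x]}$ in convex order. I would use the description of shadows (via the potential functions / the ``greedy'' construction in \citet[Lemma 4.6]{BJ16}) to show that if $d_\mu > d_\nu$ on a positive-$\mu$-measure subset of $A$, then for $x$ at (or just beyond) the right edge of that subset the shadow $S^\nu(\mu|_{(-\infty,x]})$ is forced to place mass strictly to the right of~$x$ near~$x$; combined with $\kappa_x = \delta_x$ on $A$, which says the mass at such~$x$ stays put, this overloads $\nu$ locally — a contradiction with $\eta \le \nu$, or with left-monotonicity once we track the second leg $T_{\rm u}(x) > x$ against an earlier diagonal point.

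The main obstacle, I expect, is making the ``overloading'' argument rigorous at the level of measures rather than densities — i.e.\ turning the a.e.\ density inequality $d_\mu > d_\nu$ on $A$ into a genuine statement about $\mu((-\infty,x] \cap A) $ versus $\nu((-\infty,x] \cap \text{(relevant region)})$ for a well-chosen~$x$, and ensuring the left-curtain/shadow bookkeeping (which is about the cumulative intervals $(-\infty,x]$, not about $A$ directly) interacts correctly with the set $A$, which need not be an interval. I would handle this by working with a density point $x_0$ of $A$ (in the measure $\mu+\nu$) where $d_\mu(x_0) > d_\nu(x_0)$, replacing $A$ by $A \cap (x_0 - \delta, x_0 + \delta)$ for small $\delta$, so that on this small window $\mu$ dominates $\nu$ and $\kappa_x=\delta_x$; then the shadow of $\mu|_{(-\infty, x_0]}$, restricted to this window, cannot absorb all of $\mu|_{A \cap (x_0-\delta, x_0)}$ within the window, forcing either leftward transport into an already-saturated region or the forbidden $y^- < x < y^+$ pattern with $x < x_0$, $x \in A$. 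Once the local contradiction is in place, the claim follows for $\mu$-a.e.\ $x \in \{\kappa_x = \delta_x\}$ by a standard exhaustion over a countable collection of such windows.
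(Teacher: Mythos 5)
Your plan identifies the right target (a contradiction with $d_\mu>d_\nu$ on a positive-$\mu$-measure subset of $\{\kappa_x=\delta_x\}$), but it then heads off into machinery that is not needed and that you never bring to a conclusion: left-monotonicity, shadows/potential functions, density points, and a local ``overloading'' argument whose rigor you flag as the main obstacle without resolving it. That obstacle is in fact a sign that the route is wrong: the lemma does not use any structural property of $\pi_{\rm lc}$ at all beyond its second marginal being $\nu$ and the hypothesis $\kappa_x=\delta_x$ on the set in question, and it certainly does not need localization, density points, or the shadow machinery. Your intuition that mass arriving at $A$ ``must have come from $x'\notin A$'' is also backwards --- the problem is a mass \emph{shortfall} in $\nu|_A$, not a surplus, and chasing where extra mass ``arrived from'' leads nowhere.

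The decisive step you are missing is a global mass count. Set $A=\{\kappa_x=\delta_x\}\cap\{d_\mu>d_\nu\}$ and suppose $\mu(A)>0$. On the one hand, since $\kappa_x=\delta_x$ for $x\in A$, the disintegration of $\pi_{\rm lc}$ gives
\begin{align*}
\nu(A)\;=\;\int_\R \kappa_x(A)\,\mu(\d x)\;\geq\;\int_A \kappa_x(A)\,\mu(\d x)\;=\;\int_A \delta_x(A)\,\mu(\d x)\;=\;\mu(A).
\end{align*}
On the other hand, from $d_\mu>d_\nu$ on $A$ and $(\mu+\nu)(A)\geq\mu(A)>0$,
\begin{align*}
\mu(A)=\int_A d_\mu\,\d(\mu+\nu)\;>\;\int_A d_\nu\,\d(\mu+\nu)=\nu(A),
\end{align*}
a contradiction. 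This is essentially the paper's proof. Nothing about the left-curtain coupling beyond the marginal constraint is used; the same argument applies verbatim to any martingale (indeed any) coupling with a prescribed second marginal. Invoking Lemma~\ref{lm} or \ref{su} and trying to produce a forbidden $y^-<y'<y^+$ pattern would be both unnecessary and harder to make airtight, which is exactly the difficulty you ran into.
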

\begin{proof}
Define $A=\{x\in\R:\kappa_{x}=\delta_x\}\cap\{x\in\R:d_\mu(x)>d_\nu(x)\}$. Assuming $\mu(A)>0$, we find 
\begin{align*}\mu(A)&=\int_{A}d_\mu\,\d(\mu+\nu) >\int_{ A}d_\nu\,\d(\mu+\nu) =\nu(A) =\int\kappa_x(A)\mu(\d x) =\int\delta_x(A)\mu(\d x) =\mu(A),
\end{align*}
a contradiction.
\end{proof}

\begin{proof}[Proof of Proposition~\ref{p1}.]
We first detail the proof for the second assertion, namely that
\begin{center}
$\pi_{\rm lc}$ is supported on $S_{\rm rg}\cup S_{\rm atom}$ if $d_\mu\geq d_\nu\ \mu$-a.e.
\end{center}
and  $\pi_{\rm lc}\in\pib$ if in addition $\nu$ is atomless. 

\emph{Step~1.}  We have the martingale property $\int_\R y\,\kappa_{x}(\d y)=x$ for $\mu$-a.e.~$x$. Then by Lemma~\ref{su}, for $\mu$-a.e.~$x$ with $\mu(\{x\})=0$, either $\kappa_{x}=\delta_x$ or $\kappa_{x}$ is supported on two points $T_{\rm d}(x)<x<T_{\rm u}(x)$. Moreover, if $\mu(\{x\})>0$, then either $\kappa_{x}(\{x\})=0$ or $x$ belongs to the set $A_{\nu}=\{y\in\R: \nu(\{y\})>0\}$ of atoms of~$\nu$.  In view of Lemma~\ref{le:needSpace} and $d_\mu\geq d_\nu\ \mu$-a.e., we conclude that 
$$
  \{x\in\R:\kappa_{x}(\{x\})>0\}= \{x\in\R:\kappa_{x}=\delta_x\}\subseteq\{x\in \R :d_\mu(x)=d_\nu(x)\} \quad\mu\mbox{-a.e.\ outside }A_{\nu}.
$$
In summary, $\pi_{\rm lc}$ is the identity transport on $S:=\{x\in\R:\kappa_{x}(\{x\})>0\}\setminus A_{\nu}$ and has the backward Monge property on~$S$. Thus, we may without loss of generality ``remove'' $\mu|_{S}$ from the two marginals and assume that $\kappa_{x}(\{x\})=0$ $\mu$-a.e.\ outside~$A_{\nu}$ for the remainder of the proof.

\emph{Step~2.} Let $\Gamma$ be the left-monotone set provided by Lemma~\ref{lm}. By taking intersection, we may assume that $\Gamma\subseteq \supp(\mu)\times\supp(\nu)$ and $\Gamma\subseteq R_{\rm legs}\cup R_{\rm atom}$, where $\supp(\cdot)$ denotes topological support. By Step~1, we may further assume $(\Gamma\setminus S_{\rm atom})\cap\{(x,x):x\in\R\}=\emptyset$. Suppose that $x<x'$ are two points being transported to the same point $y\notin A_{\nu}$, or more precisely, that the pairs $(x,y),(x',y)\in\Gamma\setminus S_{\rm atom}$, and in particular $y\not\in\{x,x'\}$. Then there are three possible cases (see Figure \ref{fig:2}):
 \begin{enumerate}[(a)]
    \item If $x<y<x'$, then $\mu((x,y))=0$ (here, $(x,y)$ refers to an interval instead of a pair). Indeed, if $x_*\in(x,y)$, then  by Lemma~\ref{lm}, its right ``leg'' must lie on $y$ because otherwise the left leg of $x'$ ``steps into'' the legs of $x_*$. Since $\nu(\{y\})=0$, $\mu((x,y))=0$.
    \item If $y<x<x'$, denote by $y'$ the right leg of $x$. Then by Lemma~\ref{lm}, the left leg of any $x_*\in(x,\min\{y',x'\}]$ cannot lie to the right of $y$, to avoid stepping into the legs of $x$, and not to the left of $y$ because otherwise the left leg of $x'$ steps into the legs of $x_*$. Thus the left leg of $x_*$ must lie on $y$, implying that  $\mu((x,\min\{y',x'\}))=0$.
    
    \item If $x<x'<y$, consider $x_*\in(x,x')$. Then by Lemma~\ref{lm}, the right leg of $x_*$ cannot lie to the left of $y$,  to avoid stepping into the legs of $x$, and not to the right of $y$, because otherwise the right leg of $x'$ steps into the legs of $x_*$. This shows that the right leg of $x_*$ must lie on $y$, and thus    $\mu((x,x'))=0.$
\end{enumerate}

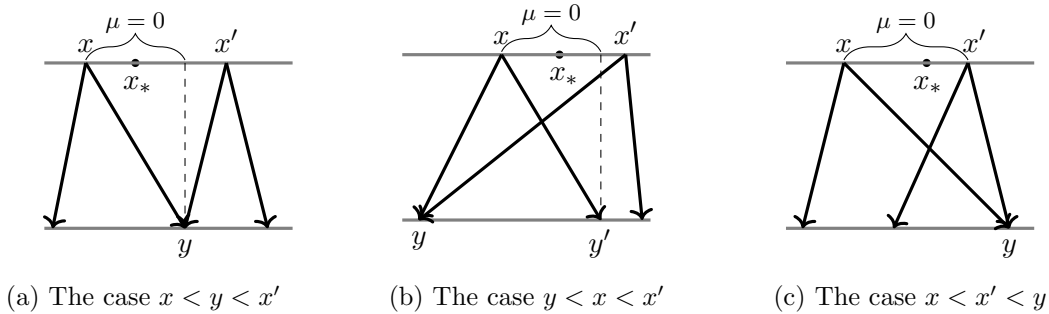
\begin{figure}[h!]  
\begin{center}
\begin{subfigure}[b]{0.3\textwidth}
\centering
\captionsetup{justification=centering}
\begin{tikzpicture} [scale=1.1]
\node[circle,fill,scale=0.3] at (-2.8,0){};
\draw[gray, very thick] (-3.9,0)--(-0.9,0);
\draw[gray, very thick] (-3.9,-2)--(-0.9,-2);
\draw[very thick, ->](-3.4,0)--(-2.2,-2)node[pos=0.001,above]{$x$};
\draw[very thick, ->](-3.4,0)--(-3.8,-2)node[pos=0.25,above]{\hspace{1.6cm}$x_*$};
\draw[very thick, ->](-1.7,0)--(-2.2,-2)node[pos=0.999,below]{$y$};
\draw[very thick, ->](-1.7,0)--(-1.2,-2)node[pos=0.001,above]{$x'$};
\draw [decorate,decoration={brace,amplitude=10pt},xshift=0pt,yshift=1pt]
(-3.4,0) -- (-2.2,0) node [black,midway,yshift=0.5cm] 
{\footnotesize $\mu=0$};
\draw[dashed](-2.2,0)--(-2.2,-2);
\end{tikzpicture}
\caption{The case $x<y<x'$}
\end{subfigure} 
\begin{subfigure}[b]{0.3\textwidth}
\centering
\begin{tikzpicture}[scale=1.1]
\node[circle,fill,scale=0.3] at (-2,0){};
\draw[gray, very thick] (-3.9,0)--(-0.9,0);
\draw[gray, very thick] (-3.9,-2)--(-0.9,-2);
\draw[very thick, ->](-2.7,0)--(-3.7,-2)node[pos=0.001,above]{$x$};
\draw[very thick, ->](-2.7,0)--(-1.5,-2)node[pos=0.25,above]{\hspace{1cm}$x_*$}node[pos=0.99,below]{$y'$};
\draw[very thick, ->](-1.2,0)--(-3.7,-2)node[pos=0.999,below]{$y$};
\draw[very thick, ->](-1.2,0)--(-1,-2)node[pos=0.001,above]{$x'$};
\draw [decorate,decoration={brace,amplitude=10pt},xshift=0pt,yshift=1pt]
(-2.7,0) -- (-1.5,0) node [black,midway,yshift=0.5cm] 
{\footnotesize $\mu=0$};
\draw[dashed](-1.5,0)--(-1.5,-2);
\end{tikzpicture}
\caption{The case $y<x<x'$}
\end{subfigure}
\begin{subfigure}[b]{0.3\textwidth}
\centering
\begin{tikzpicture}[scale=1.1]
\node[circle,fill,scale=0.3] at (-2.2,0){};
\draw[gray, very thick] (-3.9,0)--(-0.9,0);
\draw[gray, very thick] (-3.9,-2)--(-0.9,-2);
\draw[very thick, ->](-3.2,0)--(-3.7,-2)node[pos=0.001,above]{$x$};
\draw[very thick, ->](-3.2,0)--(-1.2,-2)node[pos=0.25,above]{\hspace{1.13cm}$x_*$};
\draw[very thick, ->](-1.7,0)--(-1.2,-2)node[pos=0.999,below]{$y$};
\draw[very thick, ->](-1.7,0)--(-2.6,-2)node[pos=0.001,above]{\hspace{.17cm}$x'$};
\draw [decorate,decoration={brace,amplitude=10pt},xshift=0pt,yshift=1pt]
(-3.2,0) -- (-1.7,0) node [black,midway,yshift=0.5cm] 
{\footnotesize $\mu=0$};
\end{tikzpicture}
\caption{The case $x<x'<y$}
\end{subfigure}
\caption{Illustration of the three cases}\label{fig:2}
\end{center}
\end{figure}

As $\supp(\mu)$ is closed, its complement can be written as a countable disjoint union of open intervals. 
We have shown that  each non-injective pair of $(x,y),(x',y)\in\Gamma\setminus S_{\rm atom}$ with $x\neq x'$  corresponds to an endpoint of one of the open intervals, and the map from the collection of all intervals to the collection of $y$ values is at most one-to-two (since there are at most two legs). Thus, there are at most countably many such points~$y$, and as $\nu$ is atomless outside~$A_{\nu}$, it follows that these points are $\nu$-negligible. In summary, we have shown that $\pi_{\rm lc}$ is supported on the union of the (reverse) graph $S_{\rm rg}$ of a function $h:\R\to\R$ and $S_{\rm atom}$.

It remains to see that $h$ can be chosen to be measurable, and that $\pi_{\rm lc}=(h,\Id_{\R})_{\#}\nu$ when~$\nu$ is atomless. In the latter case, the mere fact that $\pi_{\rm lc}$ is concentrated on the graph of~$h$ already implies that~$h$ is $\nu$-measurable and 
$\pi_{\rm lc}=(h,\Id_{\R})_{\#}\nu$; see \cite[Lemma~3.1]{AhmadKimMcCann.11} for a detailed argument exploiting the inner regularity of Borel measures. Redefining~$h$ on a $\nu$-nullset then gives the desired Borel measurable function. In the case with atoms, we can apply the same  lemma to the restriction $\pi'$ of $\pi_{\rm lc}$ to the Borel set $\R^{2}\setminus S_{\rm atom}$. The lemma then yields that $h$ is $\nu'$-measurable where~$\nu'$ is the second marginal of~$\pi'$, and we can again extract a Borel version. This completes the proof of the second assertion in Proposition~\ref{p1}.

The proof of the first assertion, namely that $\pi_{\rm lc}$ is supported on $S_{\rm rg}\cup S_{\rm diag}\cup S_{\rm atom}$, is similar to Step~2 above (but simpler): we now argue on the left-monotone set $\Gamma \setminus (S_{\rm diag}\cup S_{\rm atom})$.
\end{proof}

\begin{remark}[When is~$\pi_{\rm lc}$ Monge?] 
While not directly required for our main results, it seems natural to ask when~$\pi_{\rm lc}$ has the (reverse) Monge property. In the following discussion, we assume that~$\nu$ is atomless.
First of all, we note that the converse of Proposition~\ref{p1} is false:  $\pi_{\rm lc}\in \pib$ does not imply that 
$d_\mu\geq d_\nu\ \mu$-a.e. This can be seen by choosing the black density in Figure~\ref{fig:3too}   small enough.

Recall that $\pi_{\rm lc}$ is supported on the union of the (forward) graphs of $T_{\rm d}$ and $T_{\rm u}$.  It follows from  Proposition~\ref{p1} that $\pi_{\rm lc}$ is Monge if and only if
 $d_{\mu}=d_{\nu}$ $\mu$-a.e.~on  the set  $\{x\in\R:T_{\rm d}(x)=T_{\rm u}(x)\}$ where the two legs of $\pi_{\rm lc}$ coincide. 
Under additional regularity assumptions, the main results of \cite{HT16} imply (somewhat convoluted) equivalent conditions for this that can be stated in terms of the primitives~$\mu$ and~$\nu$.
To see the basic complication, consider $x\in\R$ with $d_\mu(x)\leq d_\nu(x)$. It is possible that $T_{\rm d}(x)=T_{\rm u}(x)$, i.e., the two legs coincide, while it is also possible that the $\nu$-mass at $x$ already lies in the shadow of $\mu|_{(-\infty,y]}$ for some $y<x$, making the legs separate instead, as shown in Figure~\ref{fig:3too}. 
In the proof of Theorem~\ref{thm1} below, we circumvent these issues by using the tractable sufficient condition $d_\mu\geq d_\nu$ and guaranteeing it through the decomposition into bars.
\begin{figure}[h!] 
\begin{center}
 \scalebox{1}{\begin{subfigure}[b]{0.4\textwidth}
\centering
\captionsetup{justification=centering}
\begin{tikzpicture}[scale=1.9]
\draw[blue, very thick] (-4.5,0)--(-1.7,0)  node[pos=1,right, black] {$\mu$};
\draw[blue, very thick] (-4.5,-1.2)--(-1.7,-1.2)  node[pos=1,right, black] {$\nu$};
\filldraw[black!30, very thick] (-3.6,0) rectangle (-3.4,1);
\filldraw[black!60, very thick] (-2.8,0) rectangle (-2.6,1);
\filldraw[black!100,very thick] (-3.25,0) rectangle (-2.95,0.133);

\filldraw[black!30, very thick] (-4,-1.2) rectangle (-3,-1);
\filldraw[black!100, very thick] (-3,-1.2) rectangle (-2.84,-1);
\filldraw[black!100, very thick] (-4.04,-1.2) rectangle (-4,-1);
\filldraw[black!60,very thick] (-4.2,-1.2) rectangle (-4.04,-1);
\filldraw[black!60,very thick] (-2.84,-1.2) rectangle (-2,-1);

\draw[very thick, ->](-3.1,-0.42)--(-3.1,-0.85);

 \draw [decorate,decoration={brace,mirror,amplitude=7pt},xshift=0pt]
(-3.27,0) -- (-2.93,0) node [black,midway,yshift=-0.5cm]  {$d_\mu< d_\nu$};
\end{tikzpicture}
\end{subfigure}}

\caption{The left-curtain transport $\pi_{\rm lc}$ is not the identity on $\{x\in\R:d_\mu(x)< d_\nu(x)\}$}\label{fig:3too}
\end{center}\end{figure}
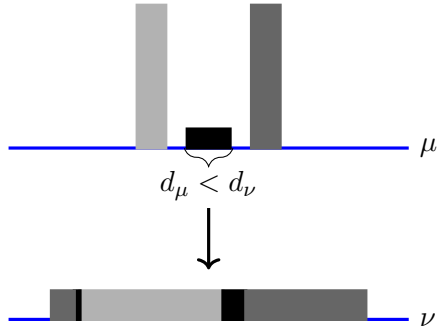
\end{remark} 
\subsection{Proof of Theorem~\ref{thm1}}

We follow the notation of Section~\ref{S} but consider possibly unnormalized finite measures $\mu,\nu$ on~$\R$ as the following auxiliary results will be applied to sub-measures of the given marginals. We denote the barycenter by $\bary(\mu):=\int_\R x\,\mu(\d x)/\mu(\R)$. 

\begin{lemma}\label{0}
If $\mu(\R)=\nu(\R)>0$, then 
$\mu\left(\left\{d_{\mu}\geq d_{\nu}\right\}\right)>0.$
\end{lemma}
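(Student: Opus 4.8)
The plan is to argue by contradiction. Suppose $\mu(\{d_\mu \geq d_\nu\}) = 0$, i.e., $d_\mu < d_\nu$ holds $\mu$-a.e. Since $\mu$ and $\nu$ are mutually absolutely continuous with respect to $\mu + \nu$, and $d_\mu + d_\nu = 1$ $(\mu+\nu)$-a.e., the condition $d_\mu < d_\nu$ is equivalent to $d_\mu < 1/2$. Let $A = \{d_\mu < d_\nu\} = \{d_\mu < 1/2\}$. By hypothesis $\mu(A^c) = 0$, so $\mu(A) = \mu(\R)$.

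The key computation is then to integrate the densities over $A$. We have
\[
\mu(\R) = \mu(A) = \int_A d_\mu \, \d(\mu+\nu) < \int_A d_\nu \, \d(\mu+\nu) = \nu(A) \leq \nu(\R).
\]
For the strict inequality, I need $(\mu+\nu)(A) > 0$ (so that integrating the strict pointwise inequality $d_\mu < d_\nu$ over $A$ yields a strict inequality) — and this holds because $\mu(A) = \mu(\R) > 0$. This gives $\mu(\R) < \nu(\R)$, contradicting the assumption $\mu(\R) = \nu(\R)$. Hence $\mu(\{d_\mu \geq d_\nu\}) > 0$.

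There is essentially no obstacle here: the only mild point of care is ensuring the pointwise strict inequality transfers to a strict inequality of integrals, which requires knowing the set of integration is not $(\mu+\nu)$-null; this is immediate from $\mu(A) = \mu(\R) > 0$. One should also note the harmless measure-theoretic fact that "$d_\mu \geq d_\nu$" and "$d_\mu < d_\nu$" partition $\R$ up to a $(\mu+\nu)$-null set, so that $\mu(\{d_\mu \geq d_\nu\}) = 0$ indeed forces $d_\mu < d_\nu$ $\mu$-a.e. The whole argument is a two-line variant of the contradiction already carried out in the proof of Lemma~\ref{le:needSpace}, just with the trivial martingale kernel $\kappa_x = \delta_x$ replaced by the global statement about total masses.
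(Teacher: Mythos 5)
Your proof is correct and follows essentially the same argument as the paper: assume the negation, integrate the density inequality $d_\mu < d_\nu$ over $\{d_\mu < d_\nu\}$ against $\mu+\nu$, and derive $\mu(\R) < \nu(\R)$. The only differences are cosmetic --- you spell out the comparison of integrals and the positivity of $(\mu+\nu)(A)$ that justifies the strict inequality, which the paper's two-line proof leaves implicit.
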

\begin{proof}
Suppose $\mu\left(\left\{d_{\mu}\geq d_{\nu}\right\}\right)=0$, then also $\nu\left(\left\{d_{\mu}\geq d_{\nu}\right\}\right)=0$. Thus 
$\mu(\R)=\mu\left(\left\{d_{\mu}< d_{\nu}\right\}\right)
<\nu\left(\left\{d_{\mu}< d_{\nu}\right\}\right)=\nu(\R)$,
contradicting our assumption.
\end{proof}

Two properties of shadows will be used repeatedly. The first is due to \cite[Theorem 4.8]{BJ16}.
\begin{lemma}[Associativity of shadows]\label{ass}
Suppose that $\mu=\mu_1+\mu_2\e\nu$. Then $\mu_2\e\nu-S^\nu(\mu_1)$ and $S^\nu(\mu)=S^\nu(\mu_1)+S^{\nu-S^\nu(\mu_1)}(\mu_2)$.
\end{lemma}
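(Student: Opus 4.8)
The plan is to prove both assertions simultaneously by passing to \emph{potential functions}. For a finite measure $\rho$ on $\R$ with finite first moment write $u_\rho(x):=\int_\R|x-y|\,\rho(\d y)$; recall that $u_\rho$ is convex with $|u_\rho'|\le\rho(\R)$ and affine outside $\supp\rho$, that $\rho\mapsto u_\rho$ is additive, that $\rho_1\le\rho_2$ set-wise iff $u_{\rho_2}-u_{\rho_1}$ is again a potential function, and that $\rho_1\cx\rho_2$ iff $\rho_1,\rho_2$ have the same mass and barycenter and $u_{\rho_1}\le u_{\rho_2}$ (in particular $\e$ is transitive and is implied by set-wise $\le$). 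The one nontrivial input I would borrow from \cite[Sect.~4]{BJ16} is that for $\mu\e\nu$,
\[
  u_{S^\nu(\mu)}=u_\nu-\widehat{u_\nu-u_\mu},
\]
where $\widehat g$ denotes the greatest convex minorant of $g$; this follows from \cite[Lemma~4.6]{BJ16} and the definition of the shadow (minimizing $u_\eta$ pointwise over $\{\eta:\mu\cx\eta\le\nu\}$ amounts to maximizing the convex function $u_\nu-u_\eta$ under $u_\nu-u_\eta\le u_\nu-u_\mu$), and in fact $\mu\e\nu$ is precisely the condition making the right-hand side a genuine potential function.

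The whole argument then rests on two elementary properties of the operation $g\mapsto\widehat g$. First, if $u$ is convex then $\widehat{\widehat g-u}=\widehat{g-u}$: ``$\le$'' is monotonicity of $\widehat{\,\cdot\,}$ together with $\widehat g\le g$, while ``$\ge$'' holds because $\widehat{g-u}+u$ is a sum of convex functions, hence convex, and it lies below $g$, hence below $\widehat g$, so that $\widehat{g-u}\le\widehat g-u$, and being convex it then lies below $\widehat{\widehat g-u}$. Second, if $g$ and $u$ are both convex then $g-\widehat{g-u}$ is again convex: on the contact set $\{\widehat{g-u}=g-u\}$ it equals $u$, on the complementary open intervals (where the greatest convex minorant is affine) it equals $g$ minus an affine function, and comparing one-sided derivatives at the endpoints of those intervals shows that the convex pieces fit together convexly.

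To conclude, put $f:=u_\nu-u_{\mu_1}$. Since $\mu_1\le\mu\e\nu$ we have $\mu_1\e\nu$, so $\widehat f$ is a genuine potential and $\rho:=\nu-S^\nu(\mu_1)\ge0$ has $u_\rho=u_\nu-u_{S^\nu(\mu_1)}=\widehat f$. The candidate potential of $S^{\rho}(\mu_2)$ is $u_\rho-\widehat{u_\rho-u_{\mu_2}}=\widehat f-\widehat{\widehat f-u_{\mu_2}}$. By the first property applied to $f$ and $u_{\mu_2}$ this equals $\widehat f-\widehat{f-u_{\mu_2}}=\widehat f-\widehat{u_\nu-u_\mu}=u_{S^\nu(\mu)}-u_{S^\nu(\mu_1)}$; by the second property (with $\widehat f$ and $u_{\mu_2}$ in place of $g$ and $u$) it is convex; and its asymptotic slope is $\mu(\R)-\mu_1(\R)=\mu_2(\R)$, that of a mass-$\mu_2(\R)$ potential. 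Hence it genuinely is the potential of a measure $\le\rho$, which by \cite[Lemma~4.6]{BJ16} is the first assertion $\mu_2\e\nu-S^\nu(\mu_1)$ and at the same time identifies $u_{S^\nu(\mu_1)+S^{\nu-S^\nu(\mu_1)}(\mu_2)}=u_{S^\nu(\mu_1)}+(u_{S^\nu(\mu)}-u_{S^\nu(\mu_1)})=u_{S^\nu(\mu)}$; since these two measures also share mass and barycenter ($=$ those of $\mu$), they coincide, which is the second assertion. As a sanity check on the substantive direction, once the first assertion is available one sees directly that $\mu=\mu_1+\mu_2\cx S^\nu(\mu_1)+S^{\nu-S^\nu(\mu_1)}(\mu_2)\le\nu$ by additivity of $\cx$, so minimality yields $S^\nu(\mu)\cx S^\nu(\mu_1)+S^{\nu-S^\nu(\mu_1)}(\mu_2)$; the potential computation supplies the reverse $\cx$-inequality, which is the hard one.

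I expect the main obstacle to be the potential-function bookkeeping rather than the algebra above: one must check that every greatest convex minorant appearing has the correct asymptotic slopes, so that it really corresponds to a (sub)measure of the prescribed total mass — this is where the full hypothesis $\mu_1+\mu_2\e\nu$ is consumed — and one must verify the convexity of $g-\widehat{g-u}$ with some care (or recognize it as the potential of the associated obstacle-problem solution). Once those normalizations are pinned down, the short chain of identities does the rest; this is, in essence, the proof of \cite[Theorem~4.8]{BJ16}.
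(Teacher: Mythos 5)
The paper does not prove Lemma~\ref{ass}; it simply cites \cite[Theorem~4.8]{BJ16}, so there is no in-paper argument to compare against. Your potential-function proof is correct and is indeed close in spirit to Beiglb\"ock--Juillet's own treatment (their Section~4 develops the shadow precisely through the identity $u_{S^\nu(\mu)}=u_\nu-\widehat{u_\nu-u_\mu}$). Both of your auxiliary claims about the greatest convex minorant hold, but the second one, convexity of $g-\widehat{g-u}$ for $g,u$ convex, is genuinely the crux and your justification is a bit too breezy. The point you need, and should state, is the obstacle-problem condition at the endpoints of the non-contact intervals: on a maximal interval $(a,b)$ where $\widehat{g-u}<g-u$ and $\widehat{g-u}$ is affine with slope $\alpha$, the fact that $(g-u)-\widehat{g-u}\ge 0$ vanishes at $a$ forces $(g-u)'(a^+)\ge\alpha$, i.e.\ $u'(a^+)\le g'(a^+)-\alpha$; combined with $u'(a^-)\le u'(a^+)$ (convexity of $u$) this gives exactly the needed jump $h'(a^-)=u'(a^-)\le g'(a^+)-\alpha=h'(a^+)$, and symmetrically at $b$. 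The case of an isolated contact point between two affine pieces with slopes $\alpha_1\le\alpha_2$ is handled the same way, using $(g-u)'(a^-)\le\alpha_1$ and $(g-u)'(a^+)\ge\alpha_2$. With that filled in, the rest of the bookkeeping (asymptotic slopes, mass and barycenter, and the monotonicity chain showing minimality of the candidate potential) is as you say, and the argument is a clean, self-contained proof of the cited theorem.
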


 The second can be found in \cite[Example 4.7]{BJ16}.

\begin{lemma}\label{atom} When $\nu$ is atomless, the shadow of an atom of $\mu$ is $\nu$ restricted to an interval.
\end{lemma}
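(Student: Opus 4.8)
The plan is to identify $S^\nu(a\delta_x)$ explicitly as the restriction of~$\nu$ to an interval. Fix an atom $a\delta_x$ of~$\mu$; since $\mu\e\nu$ we have $a\delta_x\e\nu$ and in particular $a\le\nu(\R)$. If $a=\nu(\R)$ then $a\delta_x\cx\nu$, forcing $x=\bary(\nu)$ and $S^\nu(a\delta_x)=\nu=\nu|_\R$, so assume $a<\nu(\R)$. First I would produce the interval: as $\nu$ is atomless, for each $s\in[0,\nu(\R)-a]$ there is a closed interval $I_s$ with $\nu((-\infty,\inf I_s))=s$ and $\nu(I_s)=a$, so that $I_0$ carries the leftmost mass~$a$ of~$\nu$, $I_{\nu(\R)-a}$ the rightmost, and $s\mapsto\bary(\nu|_{I_s})$ is continuous (again by atomlessness) and nondecreasing with range $[\bary(\nu|_{I_0}),\bary(\nu|_{I_{\nu(\R)-a}})]$. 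Testing the relation $a\delta_x\e\nu$ against the nonnegative convex functions $y\mapsto(\sup I_0-y)_+$ and $y\mapsto(y-\inf I_{\nu(\R)-a})_+$ shows that $x$ lies in this range, so by the intermediate value theorem there is a window $I$ with $\bary(\nu|_I)=x$. Put $\theta:=\nu|_I$.

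Next I would verify that $\theta$ is the shadow. Plainly $\theta\le\nu$, $\theta(\R)=a$, and $\theta$ has barycenter~$x$, so Jensen's inequality gives $a\delta_x\cx\theta$; hence $\theta$ lies in $\{\eta:a\delta_x\cx\eta\le\nu\}$ and $S^\nu(a\delta_x)\cx\theta$. It remains to show $\theta\cx\eta$ for \emph{every} $\eta$ in this set, for then $\theta$ is its $\cx$-least element, i.e.\ $\theta=S^\nu(a\delta_x)$, which is the assertion.

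The heart of the matter is this minimality. Given $\eta$ with $a\delta_x\cx\eta\le\nu$ (so $\eta(\R)=a$ and $\bary(\eta)=x$), I would introduce the ``excess'' and ``stray'' parts
\[
\alpha:=(\nu-\eta)|_I\ge0,\qquad\beta:=\eta|_{I^c}\ge0,
\]
with $\alpha$ supported on~$I$ and $\beta$ on~$I^c$. Comparing $\alpha(\R)=\nu(I)-\eta(I)$ with $\beta(\R)=\eta(\R)-\eta(I)$ gives $\alpha(\R)=\beta(\R)$, and comparing first moments---using $\int_I y\,\nu(\d y)=\bary(\theta)\,\theta(\R)=\bary(\eta)\,\eta(\R)=\int y\,\eta(\d y)$---gives $\int y\,\alpha(\d y)=\int y\,\beta(\d y)$. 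Thus $\alpha$ and $\beta$ have equal mass and equal barycenter, with $\alpha$ living on the interval~$I$ and $\beta$ outside it. I claim this forces $\alpha\cx\beta$: setting $h(k):=\int(y-k)_+\,(\beta-\alpha)(\d y)$, one checks directly that $h(k)\ge0$ for $k\le\inf I$ and for $k\ge\sup I$ (elementary, using the equal means and that $\alpha$ vanishes off~$I$), whereas on the interior of~$I$ the map $k\mapsto\int(y-k)_+\,\beta(\d y)$ is affine and $k\mapsto\int(y-k)_+\,\alpha(\d y)$ is convex, so $h$ is concave there and hence also nonnegative; with the equal masses and barycenters this yields $\alpha\cx\beta$. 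Finally, for every convex $f:\R\to\R$,
\[
\int f\,\d\theta=\int_I f\,\d\eta+\int f\,\d\alpha\le\int_I f\,\d\eta+\int f\,\d\beta=\int f\,\d\eta,
\]
which is exactly $\theta\cx\eta$.

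I expect the decisive point to be this last step: one has to notice that the excess mass $(\nu-\eta)|_I$ and the stray mass $\eta|_{I^c}$ are automatically of equal mass and barycenter, and that an interval-supported measure is convex-dominated by \emph{every} measure of equal mass and barycenter supported in the complement of that interval. The sliding-window construction and the convexity estimates are routine bookkeeping; atomlessness of~$\nu$ is used only to guarantee that windows of $\nu$-mass exactly~$a$ exist and move continuously (and, incidentally, that $\eta\le\nu$ is atomless, so interval endpoints require no special treatment).
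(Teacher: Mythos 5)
Your proof is correct, and I verified the details: the sliding-window construction and the barycenter intermediate-value argument correctly locate an interval $I$ with $\bary(\nu|_I)=x$, and the minimality argument is sound. In particular, the key computation checks out: $\alpha:=(\nu-\eta)|_I$ and $\beta:=\eta|_{I^c}$ have equal mass ($=a-\eta(I)$) and equal barycenter (both equal to $\int_I y\,\nu(\d y)-\int_I y\,\eta(\d y)$, using $\int_I y\,\nu(\d y)=xa=\int y\,\eta(\d y)$), and $h(k)=\int(y-k)_+(\beta-\alpha)(\d y)$ is nonnegative at $k=\inf I$ and $k=\sup I$ by the equal-mean/equal-mass bookkeeping, concave on the interior because $\beta$ puts no mass there (making $\int(y-k)_+\d\beta$ affine) while $\int(y-k)_+\d\alpha$ is convex, and hence nonnegative everywhere; this gives $\alpha\cx\beta$ and then $\theta\cx\eta$ for every competitor~$\eta$.

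Worth noting: the paper does not actually prove this lemma---it cites \cite[Example~4.7]{BJ16}---so you supply a self-contained argument where the paper gives a pointer. Your approach (characterize the interval by its barycenter, then verify $\cx$-minimality directly) is the same in spirit as the cited example, but you isolate a clean and reusable sublemma that deserves emphasis: \emph{a finite measure supported on an interval $I$ is $\cx$-dominated by every finite measure of equal mass and barycenter supported on $I^c$.} Your proof of that sublemma (affine-vs-convex on the interior, elementary endpoint bounds) is tidy. Two minor points you implicitly use but could flag: (1) if $I$ is a half-line the argument degenerates because $\alpha$ and $\beta$ are then separated by a point, so equal barycenters force $\alpha=\beta=0$; (2) the continuity of $s\mapsto\bary(\nu|_{I_s})$ is cleanest via the quantile function $q$ of $\nu$, writing $\bary(\nu|_{I_s})=a^{-1}\int_s^{s+a}q(u)\,\d u$, which also makes the monotonicity transparent.
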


The following significantly generalizes Lemma~\ref{atom} by using Proposition~\ref{p1}.

\begin{lemma}\label{ms} Consider $\mu\e\nu$ with $d_{\mu}\geq d_{\nu}$ $\mu$-a.e. Then $S^\nu(\mu)$ and $\nu-S^\nu(\mu)$ are mutually singular outside of $\{y\in\R:\nu(\{y\})>0\}$.
\end{lemma}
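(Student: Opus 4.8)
The idea is to reduce the claim to the backward structure of the left-curtain transport established in Proposition~\ref{p1}. First I would reduce to the normalized case: if $\mu(\R)<\nu(\R)$ the statement is about the finite measure $\mu$ and its shadow $S^\nu(\mu)$, which by definition satisfies $\mu\cx S^\nu(\mu)$ and has the same total mass as $\mu$; since the density condition $d_\mu\ge d_\nu$ is inherited (or can be arranged) and mutual singularity of $S^\nu(\mu)$ and $\nu-S^\nu(\mu)$ only involves these two measures, it suffices to treat $\mu\cx\nu$ with $\mu(\R)=\nu(\R)$, after possibly replacing $\nu$ by $S^\nu(\mu)$. Then $\pi_{\rm lc}=\pi_{\rm lc}(\mu,\nu)\in\pim$ is well-defined.

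Next I would invoke Proposition~\ref{p1}: since $d_\mu\ge d_\nu$ $\mu$-a.e. (i.e. $\d\mu/\d(\mu+\nu)\ge 1/2$ $\mu$-a.e.), $\pi_{\rm lc}$ is supported on $S_{\rm rg}\cup S_{\rm atom}$, where $S_{\rm rg}=\{(h(y),y):y\in\R\}$ is the reverse graph of a Borel function~$h$ and $S_{\rm atom}=\{(x,y):\nu(\{y\})>0\}$. Restricting attention to $\R^2\setminus S_{\rm atom}$ — i.e. to $y\notin A_\nu:=\{y:\nu(\{y\})>0\}$ — the measure $\pi_{\rm lc}$ lives on the graph of~$h$, so its $\mu$-disintegration $\kappa_x$ is, for $\mu$-a.e.~$x$, concentrated on the single fiber $h^{-1}(\{x\})\cup A_\nu$. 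Equivalently, the fibers $\{h=x\}$ as $x$ ranges over $\supp\mu$ are pairwise disjoint and partition $\R\setminus A_\nu$ (up to $\nu$-null sets). Now $S^\nu(\mu)$ is the first marginal pushed forward — more precisely, $\pi_{\rm lc}$ transports $\mu$ to $\nu$, and $S^\nu(\mu)=\nu$ here — so I want to identify $S^\nu(\mu)$ and $\nu-S^\nu(\mu)$ in the unnormalized setting via associativity: applying Lemma~\ref{ass} with $\mu$ being part of a larger measure, or directly using that the left-curtain transport $\pi_{\rm lc}(\mu,S^\nu(\mu))$ maps $\mu$ onto $S^\nu(\mu)$ which is $\le\nu$.

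The crux is the following: outside $A_\nu$, the set $G:=\{(x,y)\in S_{\rm rg}: x\in\supp\mu\}$ is a Borel graph over its $y$-projection $\pi_2(G)$, and $S^\nu(\mu)$ is carried by $\pi_2(G)$ while $\nu - S^\nu(\mu)$ is carried by its complement. To make this rigorous I would argue that in the reduced case ($\mu(\R)=\nu(\R)$, $\nu=S^\nu(\mu)$) one has $\nu-S^\nu(\mu)=0$ outside $A_\nu$, so singularity is trivial; the content is really in the genuine case $\mu(\R)<\nu(\R)$, handled by writing $\nu = S^\nu(\mu) + (\nu-S^\nu(\mu))$, noting $\mu\cx S^\nu(\mu)$ with $d_\mu^{(S^\nu(\mu))}\ge d_\mu^{(\nu)}\ge 1/2$ pointwise $\mu$-a.e. (since adding mass to the denominator only decreases the density, so the hypothesis must be re-derived for the pair $(\mu,S^\nu(\mu))$ — this is the step needing care), and applying Proposition~\ref{p1} to the pair $(\mu,S^\nu(\mu))$: the reverse graph of the resulting~$h$ has $y$-projection $A$ with $S^\nu(\mu)(A^c\setminus A_\nu)=0$, whereas by minimality of the shadow $\nu-S^\nu(\mu)$ gives no mass to $A\setminus A_\nu$ (any mass there could be absorbed, contradicting minimality in $\cx$, or more cleanly: the two are disjointly supported because $S^\nu(\mu)$ restricted to the graph "uses up" exactly a $\mu$-selected portion of $\nu$).

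The main obstacle I anticipate is precisely verifying that the density hypothesis $d_\mu\ge d_\nu$ transfers to $d_\mu\ge d_{S^\nu(\mu)}$ so that Proposition~\ref{p1} is applicable to the correct pair — and, dually, pinning down that $\nu-S^\nu(\mu)$ assigns zero mass to the reverse-graph projection, which should follow from Lemma~\ref{ass} (associativity) applied to a decomposition of~$\nu$, but requires threading the minimality of the shadow carefully. I would keep an eye on the $A_\nu$ exceptional set throughout, since all statements are only "outside $\{y:\nu(\{y\})>0\}$," so at each step I can freely discard that ($\mu$-irrelevant, countable) set.
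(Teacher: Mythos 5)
There is a genuine gap at the crucial step. Your setup is correct: you reduce to the case $\mu(\R) < \nu(\R)$ and correctly observe that the density hypothesis transfers to the pair $(\mu, S^\nu(\mu))$, since $S^\nu(\mu) \leq \nu$ gives $\d\mu/\d(\mu + S^\nu(\mu)) \geq \d\mu/\d(\mu + \nu) \geq 1/2$ $\mu$-a.e. But Proposition~\ref{p1} applied to $(\mu, S^\nu(\mu))$ only constrains the transport from $\mu$ onto $S^\nu(\mu)$; the measure $\nu - S^\nu(\mu)$ is simply never seen by that application. Your assertion that $\nu - S^\nu(\mu)$ gives no mass to the $y$-projection of the reverse graph (which is, up to $\nu$-null sets outside the atoms of~$\nu$, exactly the set carrying $S^\nu(\mu)$) is precisely the mutual singularity you are trying to prove, so the argument is circular at this point. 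Minimality of the shadow in the convex order does not by itself imply that $S^\nu(\mu)$ is a restriction of $\nu$; that is exactly the nontrivial content of the lemma, and it genuinely uses the density hypothesis.

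The missing idea is the Dirac completion. Because $\mu \e \nu$ with $\mu(\R) < \nu(\R)$, one may pick $\lambda := \nu(\R) - \mu(\R)$ and the barycenter-correction point $m$ so that $\mu + \lambda\delta_m \cx \nu$ with equal total mass. Now $\pi_{\rm lc}(\mu + \lambda\delta_m, \nu)$ couples \emph{all} of $\nu$, and Proposition~\ref{p1} makes it Monge outside the atoms of~$\nu$. Since the left-curtain transport sends $\mu|_{(-\infty,m]}$ to $S^\nu(\mu|_{(-\infty,m]})$, this shadow and $\nu' := \nu - S^\nu(\mu|_{(-\infty,m]})$ are mutually singular outside the atoms of~$\nu$, living on the disjoint $h$-level sets $\{h \leq m\}$ and $\{h > m\}$. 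A symmetric argument with the right-curtain transport from $\mu|_{(m,\infty)} + \lambda\delta_m$ to $\nu'$ gives the singularity of $S^{\nu'}(\mu|_{(m,\infty)})$ and $\nu' - S^{\nu'}(\mu|_{(m,\infty)})$, and Lemma~\ref{ass} assembles $S^\nu(\mu) = S^\nu(\mu|_{(-\infty,m]}) + S^{\nu'}(\mu|_{(m,\infty)})$. The Dirac mass is exactly what produces a coupling whose second marginal is all of $\nu$, allowing the Monge structure to partition $\nu$ into the shadow and its complement --- which is what you cannot see by restricting attention to $(\mu, S^\nu(\mu))$.
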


\begin{proof}
In case $\mu(\R)=\nu(\R)$, it must hold that $S^\nu(\mu)=\nu$ and the conclusion is vacuously true. Thus we may assume $\mu(\R)<\nu(\R)$. Since $\mu\e\nu$, we may add to $\mu$ a Dirac mass to get a  measure dominated by $\nu$ in convex order: taking $\lambda=\nu(\R)-\mu(\R)$ and $m=\lambda^{-1}(\nu(\R)\bary(\nu)-\mu(\R)\bary(\mu))$ yields that  $\mu+\lambda\delta_m\cx\nu$.
 Applying Proposition~\ref{p1} to the measures $\mu+\lambda\delta_m$ and $ \nu$ yields that the left-curtain transport from $\mu+\lambda\delta_m$ to $ \nu$ is Monge outside the set $A:=\{y\in\R:\nu(\{y\})>0\}$ of atoms of~$\nu$.  Since the left-curtain transport sends $\mu|_{(-\infty,m]}$ to its shadow $S^\nu(\mu|_{(-\infty,m]})$, we deduce that $S^\nu(\mu|_{(-\infty,m]})$ and $\nu':=\nu-S^\nu(\mu|_{(-\infty,m]})$ are mutually singular outside of~$A$.  Note that
$$\frac{\d(\mu|_{(m,\infty)}+\lambda\delta_m)}{\d(\mu|_{(m,\infty)}+\lambda\delta_m+\nu')}\geq \frac{1}{2},\quad (\mu|_{(m,\infty)}+\lambda\delta_m)\text{-a.e}.$$
By a symmetrical argument using Proposition~\ref{p1}, the right-curtain transport from $\mu|_{(m,\infty)}+\lambda\delta_m$ to $\nu'$ is backward Monge outside of~$A$ and sends $\mu|_{(m,\infty)}$ to $S^{\nu'}(\mu|_{(m,\infty)})$, and thus $S^{\nu'}(\mu|_{(m,\infty)})$ and $\nu'-S^{\nu'}(\mu|_{(m,\infty)})$ are mutually singular. By Lemma~\ref{ass}, it holds that
$S^\nu(\mu)=S^\nu(\mu|_{(-\infty,m]})+S^{\nu'}(\mu|_{(m,\infty)}).$ Therefore, $S^\nu(\mu)$ and $\nu-S^\nu(\mu)$ are mutually singular outside of~$A$.  
\end{proof}

We can now construct the barcode transport.

\begin{proof}[Proof of Theorem~\ref{thm1}] Given $\mu,\nu\in\mathcal P(\R)$ with $\mu\cx\nu$, we let $(d^{(0)}_\mu,d^{(0)}_\nu):=(d_\mu,d_\nu)$ be defined as in~\eqref{fg}. Consider $A_0:=\{d^{(0)}_\mu\geq d^{(0)}_\nu\}$. We transport $\mu|_{A_0}$ to $S^\nu(\mu|_{A_0})$ using the left-curtain coupling, which is Monge outside the set $\{y\in\R:\nu(\{y\})>0\}$ of atoms of~$\nu$ by Proposition~\ref{p1}. (In Figure~\ref{fig:1}\,(a), this corresponds to the light-gray area in the center.) Define the remaining measures 
$$\mu_1:=\mu-\mu|_{A_0}, \qquad \nu_1:=\nu-S^\nu(\mu|_{A_0}),$$
so that $\mu_1\cx\nu_1$. We continue recursively: given $n\in\N$ and measures $\mu_n\cx\nu_n$, we define the densities $d^{(n)}_\mu,d^{(n)}_\nu$ of $\mu_n,\nu_n$ with respect to $(\mu+\nu)$ and $A_n:=\{d^{(n)}_\mu\geq d^{(n)}_\nu\}$, as well as
$$\mu_{n+1}:=\mu_n-\mu_n|_{A_n}, \qquad \nu_{n+1}:=\nu_n-S^{\nu_n}(\mu_n|_{A_n}).$$Let also $\pi_n\in \mathcal{M}(\mu_n|_{A_n},S^{\nu_n}(\mu_n|_{A_n}))$ be the left-curtain transport, which is again Monge outside the atoms of~$\nu$ by Proposition~\ref{p1}. By construction, the measures $\{\mu_n-\mu_{n+1}\}$ are mutually singular, and by Lemma~\ref{ms}, the measures $\{\nu_n-\nu_{n+1}\}$ are mutually singular outside of $\{y\in\R:\nu(\{y\})>0\}$.

Again by construction, we have that $d^{(n)}_\mu,d^{(n)}_\nu$ are decreasing sequences of functions $(\mu+\nu)$-a.e. Denote their limits $d^{(\infty)}_\mu,d^{(\infty)}_\nu$ respectively. Let $x\in\R$ belong to the $(\mu+\nu)$-a.e.~set where $d^{(n)}_\mu,d^{(n)}_\nu$ are decreasing and such that $d^{(\infty)}_\mu(x)\geq d^{(\infty)}_\nu(x)$. Then by mutual singularity of $\{\nu_n-\nu_{n+1}\}$, we have $d^{(n)}_\nu(x)\in\{d^{(0)}_\nu(x),0\}$ for all $n$. There are two possible cases:
 \begin{enumerate}[(a)]
    \item Suppose that there is a finite $n$ such that $d^{(n)}_\nu(x)=0$. Then $d^{(\infty)}_\nu(x)=0$ and $d^{(n)}_\mu(x)\geq d^{(n)}_\nu(x)$. This means that the $\mu$-mass at $x$ must be transported at step $n+1$ or earlier, giving that $d^{(n+1)}_\mu(x)=0$.
    \item Suppose that $d^{(n)}_\nu(x)=d^{(0)}_\nu(x)$ for all $n$. Then $d^{(0)}_\mu(x)\geq d^{(\infty)}_\mu(x)\geq d^{(\infty)}_\nu(x)=d^{(0)}_\nu(x)$. By construction, the $\mu$-mass at $x$ must be transported in the first step, so that $d^{(1)}_\mu(x)=0$.
\end{enumerate}
It follows that $d^{(\infty)}_\mu(x)=0$. Therefore, $\mu_\infty$ is the zero measure by Lemma~\ref{0}, and so is $\nu_\infty$ since $\mu_n(\R)=\nu_n(\R)$ by construction. Since outside  of $\{y\in\R:\nu(\{y\})>0\}$, each transport  $\pi_n\in\mathcal{M}(\mu_n-\mu_{n+1},\nu_n-\nu_{n+1})$ is Monge  and the measures $\{\nu_n-\nu_{n+1}\}$ are mutually singular, aggregating these transports yields a transport from~$\mu$ to~$\nu$ that is Monge outside that set.
\end{proof}

We remark that, by construction, the barcode transport belongs to the broad class of shadow couplings introduced by \cite{BJ21}. While our construction uses the left-curtain transport for its relatively simple behavior, this is certainly not the only possible choice.

\begin{remark}\label{nu}
Even if the left-curtain transport is an MMT for two given marginals, our construction may result in a different transport; see Figure~\ref{fig:3} for an example.
  
\begin{figure}[ht!]
\begin{center}
\begin{subfigure}[b]{0.49\textwidth}
\centering
\captionsetup{justification=centering}
\scalebox{1}{\begin{tikzpicture}[scale=1.9]
\draw[blue, very thick] (-4.5,0)--(-1.7,0) node[pos=1,right, black] {$\mu$};
\draw[blue, very thick] (-4.5,-1.2)--(-1.7,-1.2) node[pos=1,right, black] {$\nu$};
\filldraw[black!30, very thick] (-3.6,0) rectangle (-3.4,1);
\filldraw[black!60, very thick] (-2.8,0) rectangle (-2.6,1);
\filldraw[black!100,very thick] (-3.25,0) rectangle (-2.95,0.133);

\filldraw[black!30, very thick] (-4,-1.2) rectangle (-3,-1);
\filldraw[black!100, very thick] (-3,-1.2) rectangle (-2.84,-1);
\filldraw[black!100, very thick] (-4.04,-1.2) rectangle (-4,-1);
\filldraw[black!60,very thick] (-4.2,-1.2) rectangle (-4.04,-1);
\filldraw[black!60,very thick] (-2.84,-1.2) rectangle (-2,-1);

\draw[very thick, ->](-3.1,-0.2)--(-3.1,-0.8);
\end{tikzpicture}}
\caption{Left-curtain transport}
\end{subfigure} 
\begin{subfigure}[b]{0.49\textwidth}
\centering
\scalebox{1}{\begin{tikzpicture}[scale=1.9]
\draw[blue, very thick] (-4.5,0)--(-1.7,0) node[pos=1,right, black] {$\mu$};
\draw[blue, very thick] (-4.5,-1.2)--(-1.7,-1.2) node[pos=1,right, black] {$\nu$};
\filldraw[black!30, very thick] (-3.6,0) rectangle (-3.4,1);
\filldraw[black!60, very thick] (-2.8,0) rectangle (-2.6,1);
\filldraw[black!100,very thick] (-3.25,0) rectangle (-2.95,0.133);

\filldraw[black!30, very thick] (-4,-1.2) rectangle (-3,-1);
\filldraw[black!100, very thick] (-2.1,-1.2) rectangle (-2,-1);
\filldraw[black!100, very thick] (-4.2,-1.2) rectangle (-4.1,-1);
\filldraw[black!60,very thick] (-3,-1.2) rectangle (-2.1,-1);
\filldraw[black!60,very thick] (-4.1,-1.2) rectangle (-4,-1);

\draw[very thick, ->](-3.1,-0.2)--(-3.1,-0.8);
\end{tikzpicture}}
\caption{Barcode transport}
\end{subfigure}
\caption{Left-curtain and barcode transport are MMTs, yet do not coincide}\label{fig:3}
\end{center}\end{figure}
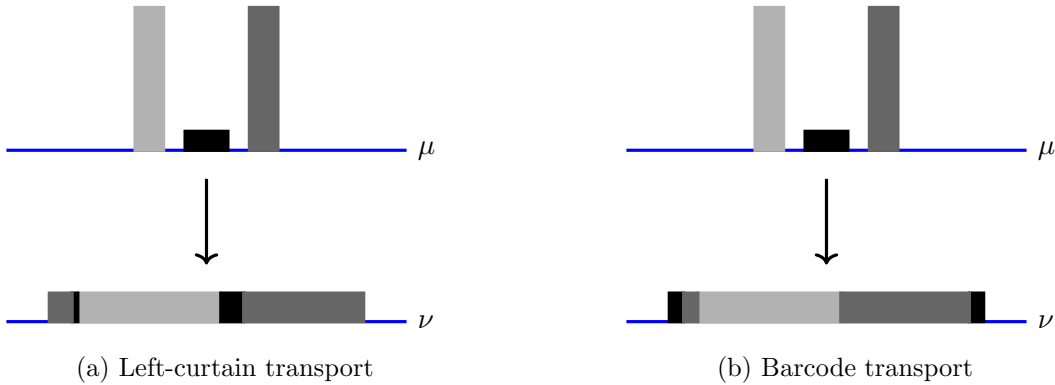

\end{remark}
  
\subsection{Proof of Theorem~\ref{th:dense}}\label{se:densityProof}

Let $\mu,\nu\in\mathcal P(\R)$ where $\nu$ is atomless. For $p\in [1,\infty]$, we denote by $W_p$ the $p$-Wasserstein distance of measures on either $\R$ or $\R^2$ equipped with the Euclidean metric. While the two assertions of Theorem~\ref{th:dense} will be proved independently, the proof for discrete~$\mu$ is presented first as it is much simpler yet contains some of the basic ideas for both cases.

 \begin{lemma}\label{lem:june-1}
 Let $\nu\in\mathcal P(\R)$ be atomless. Given any decomposition $\nu=\sum_{i=1}^\infty \nu_i$ of $\nu$,   there exist mutually singular $\tilde \nu_i, $ ${i\in \N}$ such that 
$\nu=\sum_{i=1}^\infty  \tilde \nu_i$ and $\nu_1\le_{\rm cx} \tilde \nu_1$ and 
 $\bary(\nu_i)=\bary(\tilde \nu_i)$ for $i\geq 2$.
 \end{lemma}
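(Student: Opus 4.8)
The plan is to arrange the argument so that the exceptional piece $\tilde\nu_1$ is never constructed by hand but appears as a residual, while the other pieces are produced through the shadow calculus recalled above (associativity, Lemma~\ref{ass}, and the fact that in an atomless measure the shadow of an atom is an interval restriction, Lemma~\ref{atom}). First I would record the reduction. Write $m_i:=\nu_i(\R)$ and $b_i:=\bary(\nu_i)$ for the (at most countably many) indices $i\ge2$ with $m_i>0$, and set $\sigma:=\sum_{i\ge2}\nu_i=\nu-\nu_1$. It suffices to produce mutually singular measures $\tilde\nu_i$, $i\ge2$, each a restriction of $\nu$ to a Borel set, with these sets pairwise disjoint, such that $\tilde\nu_i(\R)=m_i$, $\bary(\tilde\nu_i)=b_i$ and $\sum_{i\ge2}\tilde\nu_i\cx\sigma$. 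Granting this, put $\tilde\nu_1:=\nu-\sum_{i\ge2}\tilde\nu_i$; since $\sum_{i\ge2}\tilde\nu_i$ is $\nu$ restricted to a set, $\tilde\nu_1\ge0$ is $\nu$ restricted to the complementary set, so all the $\tilde\nu_i$ ($i\ge1$) are mutually singular and sum to $\nu$, and $\tilde\nu_1(\R)=\nu(\R)-\sigma(\R)=\nu_1(\R)$; moreover for every convex $\phi$,
\[
 \int\phi\,\d\tilde\nu_1=\int\phi\,\d\nu-\int\phi\,\d\Big(\sum_{i\ge2}\tilde\nu_i\Big)\ \ge\ \int\phi\,\d\nu-\int\phi\,\d\sigma=\int\phi\,\d\nu_1,
\]
that is, $\nu_1\cx\tilde\nu_1$, as desired.

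To construct the $\tilde\nu_i$, $i\ge2$, I would pass to the single combined atomic measure $D:=\sum_{i\ge2}m_i\delta_{b_i}$. By Jensen's inequality $m_i\delta_{b_i}\cx\nu_i$, hence $D\cx\sigma$, and since $\sigma\le\nu$ set-wise this yields $D\e\nu$, so the shadow $S^\nu(D)$ exists. Because $\sigma$ belongs to the competitor set $\{\eta:\ D\cx\eta\le\nu\}$ defining the shadow, $S^\nu(D)\cx\sigma$. Now I peel the atoms off one at a time: set $\nu^{(1)}:=\nu$ and, recursively, $\tilde\nu_i:=S^{\nu^{(i-1)}}(m_i\delta_{b_i})$ and $\nu^{(i)}:=\nu^{(i-1)}-\tilde\nu_i$ for $i\ge2$. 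Iterating the associativity of shadows (Lemma~\ref{ass}), starting from $D\e\nu$, each shadow is well-defined and
\[
 S^\nu(D)=\sum_{i=2}^{K}\tilde\nu_i+S^{\nu^{(K)}}\Big(\sum_{i>K}m_i\delta_{b_i}\Big)\qquad\text{for every }K\ge1,
\]
where the last term is a nonnegative measure of mass $\sum_{i>K}m_i\to0$. Letting $K\to\infty$ gives $\sum_{i\ge2}\tilde\nu_i=S^\nu(D)\cx\sigma$, the convex-order bound needed above.

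It remains to check that the $\tilde\nu_i$, $i\ge2$, have the required form. Each $\nu^{(i-1)}$ is atomless, being dominated by $\nu$; hence by Lemma~\ref{atom} the shadow $\tilde\nu_i=S^{\nu^{(i-1)}}(m_i\delta_{b_i})$ equals $\nu^{(i-1)}$ restricted to an interval, so inductively $\tilde\nu_i=\nu|_{E_i}$, where $E_i$ is the part of that interval lying outside all previously used intervals; in particular the $E_i$ are pairwise disjoint. Finally $m_i\delta_{b_i}\cx\tilde\nu_i$ forces $\tilde\nu_i(\R)=m_i$ and $\bary(\tilde\nu_i)=b_i$. Trivial pieces $\nu_i=0$ are handled by $\tilde\nu_i:=0$, and the degenerate cases $\nu_1=0$ and $\nu_1=\nu$ are immediate.

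The step I expect to require the most thought is this reduction itself. The naive attempt — to build $\tilde\nu_1$ directly, e.g.\ as the convex-order-maximal (two-tailed) restriction of $\nu$ with the mass and mean of $\nu_1$, which certainly dominates $\nu_1$ — breaks down, because such a $\tilde\nu_1$ can absorb the extreme mass of $\nu$ that a later piece $\tilde\nu_i$ with near-extreme barycenter $b_i$ needs, leaving no room to realise $b_i$. Routing the construction through the single shadow $S^\nu(D)$ of the combined Dirac measure circumvents this: $S^\nu(D)$ is automatically dominated by $\sigma$ in convex order and, by associativity together with atomlessness, splits into disjoint interval-restrictions carrying exactly the barycenters $b_i$, so $\tilde\nu_1$ can safely be taken to be whatever remains. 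The residual verifications — that $D\e\nu$, that the shadows in the recursion are well-defined, and the monotone passage to the limit in the convex-order inequality — are routine.
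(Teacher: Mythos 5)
Your proof is correct and follows essentially the same strategy as the paper's: form the combined atomic measure $D=\sum_{i\ge 2}\nu_i(\R)\,\delta_{\bary(\nu_i)}$, take its shadow in $\nu$, peel off one atom at a time using associativity (Lemma~\ref{ass}) and the interval-restriction property (Lemma~\ref{atom}) to get the mutually singular $\tilde\nu_i$ for $i\ge2$, and define $\tilde\nu_1$ as the residual $\nu-S^\nu(D)$, with $\nu_1\cx\tilde\nu_1$ coming from $S^\nu(D)\cx\sigma$. The only difference is cosmetic — you peel the atoms directly from $\nu$ rather than first passing to $\nu_0=S^\nu(D)$ — and by the comparison of shadows in nested targets this yields identical $\tilde\nu_i$; your presentation is somewhat more detailed in spelling out $D\e\nu$ and the tail estimate in the limit $K\to\infty$, which the paper leaves implicit.
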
 
 
 \begin{proof}
 Define $\tilde \mu_i= \nu_i(\R) \delta_{\bary(\nu_i)}$, $i\in \N$. 
 Note that $ \sum_{i=1}^\infty \tilde \mu_i\cx \nu_1+\sum_{i=2}^\infty \tilde \mu_i  \le_{\rm cx}\sum_{i=1}^\infty \nu_i = \nu$. 
 We %
consider the shadow  $ \nu_0:=S^{\nu}(\sum_{i=2}^\infty  \tilde \mu_i)$ and set $\tilde \nu_1=\nu - \nu_0$. By Lemma~\ref{ass} and Lemma~\ref{atom}, $\nu_1\cx\tilde\nu_1$ and $\tilde\nu_1$ is mutually singular with $\nu_0$. Roughly speaking, $\tilde\nu_1$ is the \emph{largest} possible image of~$\nu_{1}$ under a martingale transport, in the sense of the convex order.

Next, we apply a shadow coupling from $ \sum_{i=2}^\infty  \tilde \mu_i $ 
 to $ \nu_0$, processing these atoms in the order $i=2,3,\dots$. More precisely,
we let 
 $\tilde \nu_2:=S^{\nu_0}(\tilde \mu_2)$ and  $\tilde \nu_i:=S^{\nu_0-\sum_{j=2}^i \tilde  \mu_j}(\tilde \mu_i)$ for $i\ge 3$. 
 By construction and Lemma~\ref{atom}, these shadows $\tilde \nu_i$, $i\ge 2$, are mutually singular. As sub-measures of~$\nu_{0}$, they are also mutually singular with $\tilde \nu_1$. The other assertions are clear.
 \end{proof}

 \begin{proof}[Proof of Theorem~\ref{th:dense} for discrete $\mu$] Fix $\pi\in\pim$ and $\ee>0$; we construct $\pi_\ee\in\pib$ with $W_\infty(\pi,\pi_\ee)\leq \ee$. Partition $\R$ into intervals $\{I_\ell\}_{\ell\in\N}$ of length $\ee$  and write $\nu=\sum_{\ell=1}^\infty\nu|_{I_\ell}$. Decompose the discrete measure $\mu$ into its atoms, $\mu=\sum_{k=1}^\infty \mu_k$. Then, decompose $\nu|_{I_\ell}=\sum_{k=1}^\infty\nu_{k,\ell}$ where $\nu_{k,\ell}$ is the image of $\mu_k$ under $\pi$ restricted to~$I_\ell$. For each $\ell$, apply Lemma~\ref{lem:june-1} to the decomposition $\nu|_{I_\ell}=\sum_{k=1}^\infty\nu_{k,\ell}$, yielding measures $\{\mu_{k,\ell}\}_{k,\ell\in\N}$ and $\{\tilde\nu_{k,\ell}\}_{k,\ell\in\N}$ such that $\mu_k=\sum_{\ell=1}^\infty \mu_{k,\ell}$ and $\nu|_{I_\ell}=\sum_{k=1}^\infty \tilde\nu_{k,\ell}$ and  $\mu_{k,\ell}\cx\tilde\nu_{k,\ell}$ and  $\{\tilde\nu_{k,\ell}\}_{k,\ell\in\N}$ are mutually singular. Moreover, $W_\infty(\tilde\nu_{k,\ell},\nu_{k,\ell})\leq\ee$ for all $k,\ell$ by construction.
 Consider the transport $\pi_\ee\in\pim$ that sends each atom $\mu_{k,\ell}$ to $\tilde\nu_{k,\ell}$. Then $\pi_\ee\in\pib$ since $\{\tilde\nu_{k,\ell}\}_{k,\ell}$ are mutually singular, and $W_\infty(\pi,\pi_\ee)\leq \ee$ since $W_\infty(\tilde\nu_{k,\ell},\nu_{k,\ell})\leq\ee$.
  \end{proof}

Before entering the technical details of the proof of Theorem~\ref{th:dense} for general~$\mu$, let us try to sketch the main ideas. Similarly as in the discrete case above, we want to partition the supports of~$\mu$ and~$\nu$ into small enough intervals $\{J_k\},\{I_\ell\}$ and define $\nu_k$ to be the image of $\mu|_{J_k}$ under the given transport $\pi_{0}\in\pim$ to be approximated. Using barcodes, we would then approximate the measures $\nu_{k}|_{I_\ell}$ within the set $I_\ell$ for each $\ell$, meaning that we find mutually singular $\{\hat\nu_{k,\ell}\}$ such that $\sum_{k}{\nu_{k}|_{I_\ell}}=\sum_k\hat\nu_{k,\ell}$ for each $\ell$. This idea does  not carry through directly, because these rearrangements may destroy vital convex order properties. Instead, we perform yet another approximation to create some ``wiggle room'' in the convex order. Rather than directly approximating the given coupling $\pi_{0}$, we approximate $\tilde \pi_0= (1-\lambda)\pi_0+ \lambda \pi_3$ for small $\lambda$ and a particular martingale transport $\pi_{3}\in\pim$ with a tailored transport kernel based on a carefully chosen Rademacher noise. Roughly speaking, adding the noise yields a locally uniform lower bound on the dispersion of the transport kernels. 

It will be important to quantify how far two marginals are separated from one another in the convex order---specifically, how large a perturbation (in $W_{\infty}$) can be applied without violating the order. To that end, the characterization of the convex order by potential functions is useful. 
  The potential function $u_\mu:\R\to \R$ of $\mu$ is defined as $x\mapsto \int_\R  |y-x| \d \mu(y)$. This function is convex and Lipschitz. 
If $\mu$ and $\nu$  have the same mass and barycenter, then $u_\mu \le u_\nu$ if and only if $\mu \le_{\rm cx} \nu$; see  \citet[Theorem 3.A.2]{SS07}. The difference $u_\nu(x)-u_\mu(x)$ will be used as a local measure of separation between the marginals.

\begin{lemma}\label{lem:irred}
Without loss of generality, we may assume that $I:=\{u_{\mu}<u_{\nu}\}$ is an (open) interval and that $\mu(I)=\nu(I)=1$. In particular, $\mu(\{u_\mu = u_\nu\})=0$.
\end{lemma}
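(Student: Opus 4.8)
The statement is a reduction: for the purpose of proving denseness of $\pib$ in $\pim$, we may replace the pair $(\mu,\nu)$ by a pair whose ``irreducible component'' structure is trivial, i.e.\ where the set $\{u_\mu < u_\nu\}$ is a single open interval $I$ and both marginals sit inside $I$. The plan is to invoke the classical decomposition of $\mu\cx\nu$ into irreducible components (due to \cite{BJ16}; see also \cite{BNT17}): the open set $\{u_\mu<u_\nu\}\subseteq\R$ is a countable disjoint union of open intervals $\bigcup_{k} I_k$, together with the ``diagonal'' part supported on $\{u_\mu=u_\nu\}$. Every martingale transport $\pi\in\pim$ respects this decomposition, in the sense that $\pi=\sum_k \pi_k + \pi_{\rm diag}$ where $\pi_k\in\M(\mu_k,\nu_k)$ with $\mu_k=\mu|_{I_k}$, $\nu_k$ the ($\pi$-independent) image of $\mu_k$, and $\pi_{\rm diag}$ is the identity coupling of $\mu|_{\{u_\mu=u_\nu\}}=\nu|_{\{u_\mu=u_\nu\}}$.

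\textbf{Key steps.} First I would recall the potential-function criterion already cited in the excerpt ($u_\mu\le u_\nu$ iff $\mu\cx\nu$, for measures of equal mass and barycenter) and the fact, which follows from it, that on each connected component $I_k$ of $\{u_\mu<u_\nu\}$ the restricted marginals $\mu_k,\nu_k$ are themselves in convex order and that the endpoints of $I_k$ carry no mass of $\nu_k - \mu_k$ beyond what forces the component to ``close up''; crucially $\nu_k$ is the same for every $\pi\in\pim$, so the component $\M(\mu_k,\nu_k)$ is an \emph{irreducible} martingale transport problem. Second, I would note that the diagonal part $\pi_{\rm diag}$ is already a (backward) Monge martingale transport --- indeed it is the identity map --- and it is unaffected by any approximation; so it suffices to approximate each $\pi_k$ by elements of $\M_M(\mu_k,\nu_k)$. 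Third, a finite or countable ``concatenation'' argument: if each $\pi_k$ is $W_\infty$-approximated (or weakly approximated) within $\M_M(\mu_k,\nu_k)$ by Monge couplings $\pi_k^{(\ee)}$ supported on mutually singular sets --- which they are, because the $I_k$ are disjoint --- then $\sum_k \pi_k^{(\ee)} + \pi_{\rm diag}$ is again a Monge martingale transport approximating $\pi$, since a countable union of mutually singular reverse graphs is again (the graph of a single function off a nullset). Hence it is no loss of generality to assume there is only one component, i.e.\ $I:=\{u_\mu<u_\nu\}$ is a single open interval and $\mu(\{u_\mu=u_\nu\})=0$; the last clause $\mu(\{u_\mu=u_\nu\})=0$ is then immediate because that part of $\mu$ has been absorbed into $\pi_{\rm diag}$.

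\textbf{Main obstacle.} The delicate point is not the decomposition itself (which is standard) but ensuring the approximation is \emph{uniform} across the countably many components and that the atomless hypothesis on $\nu$ descends to each $\nu_k$ --- it does, since $\nu_k=\nu|_{I_k}$ is a restriction of an atomless measure --- and, more subtly, that the error budget $\ee$ can be split over the components, say giving component $k$ an allowance $\ee 2^{-k}$, while still getting a genuine $W_\infty$ (not merely weak) bound when $\mu$ is discrete. Here one must be careful that $W_\infty$ of the aggregated coupling is the \emph{supremum} over components of the per-component $W_\infty$ errors rather than a sum; since the supports are disjoint, an $\ee$-bound on each piece does give an $\ee$-bound overall, so this works, but it requires checking that the optimal transport between $\sum_k\pi_k$ and $\sum_k\pi_k^{(\ee)}$ can be taken to act componentwise. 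I would also flag that one should verify $\nu_k$ inherits finiteness of the first moment and that $\mu_k\cx\nu_k$ with equal barycenter on each $I_k$ (this is exactly the content of the irreducible decomposition), so that Theorem~\ref{thm1} and the subsequent machinery apply verbatim on each component.
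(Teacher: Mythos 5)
Your proof follows essentially the same route as the paper's: both invoke the irreducible-component decomposition of \cite{BJ16}, both observe that the diagonal part $\mu_0=\nu_0$ is automatically transported by the identity (hence backward Monge), both note that the atomless hypothesis on $\nu$ removes the possibility of mass at endpoints of the irreducibility intervals so the $\nu_i$ are mutually singular, and both aggregate per-component approximations using disjointness of supports. The concerns you flag --- uniformity of the approximation over countably many components, and whether the aggregate inherits the $W_\infty$ bound --- are resolvable in the standard way (truncate to finitely many components for the weak topology; componentwise coupling for $W_\infty$, where disjoint supports make the bound a supremum), and the paper glosses over them just as lightly as you do, so there is no substantive gap relative to the paper's own argument.
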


\begin{proof}
  Consider the decomposition $\mu= \sum_{i\geq0}\mu_{i}$ and $\nu=\sum_{i\geq0}\nu_{i}$ of $(\mu,\nu)$ into the so-called irreducible components; cf.~\cite[Theorem~A.4]{BJ16}. Here $(\mu_{i},\nu_{i})$ are in convex order and any $\pi\in\pim$ transports~$\mu_{i}$ to~$\nu_{i}$. Moreover,  $\mu_{0}=\nu_{0}$ are such that any $\pi\in\pim$ transports~$\mu_{0}$ to~$\nu_{0}$ via the identity transport. Finally, $(\mu_{i})_{i\geq1}$ are supported on the disjoint intervals $\{u_{\mu_{i}}<u_{\nu_{i}}\}$ and $\mu_{0}$ is supported on the complement of their union. The same holds for $(\nu_{i})_{i\geq0}$, as follows from \cite[Lemma~A.6]{BJ16}: while in general $\nu_{i}$ may place mass at the endpoints of its interval, that is not the case here as~$\nu$ is atomless. It follows that any $\pi\in\pim$ is Monge on the complement of the intervals (since the only transport there is the identity), and if the denseness result of Theorem~\ref{th:dense} holds for each $(\mu_{i},\nu_{i})$ with $i\geq1$, then aggregating yields the desired theorem for~$(\mu,\nu)$.
\end{proof}

In the remainder of the proof, we assume that the condition of Lemma~\ref{lem:irred} holds.

\begin{lemma}\label{lem:june-2}
We have 
$\lim_{\delta \downarrow 0}\mu(A_\delta)= 0$ for $A_\delta :=[-\delta,\delta]+\{x\in \R:0\leq u_\nu(x)-u_\mu(x)<\delta\}$.
\end{lemma}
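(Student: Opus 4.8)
The plan is to prove that the function $\delta\mapsto\mu(A_\delta)$ tends to $0$ as $\delta\downarrow 0$ by showing that $\bigcap_{\delta>0}A_\delta$ is $\mu$-negligible and invoking continuity of measure from above. First I would record that the sets $A_\delta$ are nested decreasing in $\delta$, so that $\lim_{\delta\downarrow 0}\mu(A_\delta)=\mu\big(\bigcap_{\delta>0}A_\delta\big)$; this reduces the whole statement to identifying the intersection. Since $u_\nu-u_\mu$ is continuous (indeed Lipschitz, being a difference of Lipschitz potential functions) and nonnegative on $\R$ under the standing assumption of Lemma~\ref{lem:irred}, the set $\{x:0\le u_\nu(x)-u_\mu(x)<\delta\}$ shrinks, as $\delta\downarrow0$, to the closed set $Z:=\{u_\mu=u_\nu\}$; after the $[-\delta,\delta]$-thickening, the intersection over $\delta>0$ of $A_\delta$ is exactly the closure $\overline Z$ of $Z$ — but $Z$ is already closed by continuity, so $\bigcap_{\delta>0}A_\delta=Z$. (One should be slightly careful: $x\in\bigcap_\delta A_\delta$ means that for every $\delta$ there is $y_\delta$ with $|x-y_\delta|\le\delta$ and $0\le u_\nu(y_\delta)-u_\mu(y_\delta)<\delta$; letting $\delta\downarrow0$ gives $y_\delta\to x$ and, by continuity, $u_\nu(x)-u_\mu(x)=0$, i.e. $x\in Z$; conversely $Z\subseteq A_\delta$ for all $\delta$ trivially.)

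Next I would invoke Lemma~\ref{lem:irred} directly: it asserts that, under the standing reduction, $\mu(\{u_\mu=u_\nu\})=0$, i.e. $\mu(Z)=0$. Combining this with the previous paragraph, $\lim_{\delta\downarrow0}\mu(A_\delta)=\mu(Z)=0$, which is the claim. For self-containedness one could also re-derive $\mu(Z)=0$ rather than quote Lemma~\ref{lem:irred}: under the reduction, $\{u_\mu<u_\nu\}$ is an open interval $I$ on which $\mu$ and $\nu$ are concentrated, so $Z$ meets $I$ only in its (at most two) endpoints, and $Z\cap I$ has at most two points; since $\nu$ is atomless and $\mu\cx\nu$ forces $\mu$ to have no atom where $\nu$ has none in the relevant sense — more precisely, $\mu$ cannot charge the endpoints of its irreducible interval because those are not in the open set where it lives — we get $\mu(Z)=0$. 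In the write-up I would simply cite Lemma~\ref{lem:irred}, since it is stated just above.

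The only genuine subtlety, and hence the step I would be most careful about, is the interchange of the thickening $[-\delta,\delta]+(\cdot)$ with the intersection over $\delta$: a priori $\bigcap_\delta\big([-\delta,\delta]+B_\delta\big)$ could be strictly larger than $\bigcap_\delta B_\delta$ if the sets $B_\delta=\{0\le u_\nu-u_\mu<\delta\}$ were wild. Continuity of $u_\nu-u_\mu$ is exactly what rules this out, via the elementary limiting argument sketched above; no further regularity of $\mu$ or $\nu$ is needed for that part, only for $\mu(Z)=0$. Everything else is routine: monotone nesting of the $A_\delta$, finiteness of $\mu$, and continuity of measure from above.

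\begin{proof}
The sets $A_\delta$ decrease as $\delta\downarrow0$, so by continuity of the finite measure~$\mu$ from above,
\[
  \lim_{\delta\downarrow0}\mu(A_\delta)=\mu\Big(\bigcap_{\delta>0}A_\delta\Big).
\]
We claim $\bigcap_{\delta>0}A_\delta\subseteq Z:=\{x\in\R:u_\mu(x)=u_\nu(x)\}$. Indeed, if $x\in A_\delta$ for every $\delta>0$, then for each $\delta$ there is $y_\delta\in\R$ with $|x-y_\delta|\le\delta$ and $0\le u_\nu(y_\delta)-u_\mu(y_\delta)<\delta$. Letting $\delta\downarrow0$ we have $y_\delta\to x$, and since $u_\nu-u_\mu$ is continuous (a difference of Lipschitz potential functions), $u_\nu(x)-u_\mu(x)=0$, i.e.\ $x\in Z$. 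By Lemma~\ref{lem:irred}, $\mu(Z)=0$, hence $\lim_{\delta\downarrow0}\mu(A_\delta)=\mu\big(\bigcap_{\delta>0}A_\delta\big)\le\mu(Z)=0$.
\end{proof}
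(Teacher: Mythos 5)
Your proof is essentially the same as the paper's: both observe that the $A_\delta$ are nested decreasing, identify $\bigcap_{\delta>0}A_\delta$ with $\{u_\mu=u_\nu\}$, and invoke Lemma~\ref{lem:irred} together with continuity of measure from above. The only difference is that you spell out the continuity argument needed to pass the $[-\delta,\delta]$-thickening through the intersection, whereas the paper states the identification of the intersection without comment; your extra care is correct and harmless.
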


\begin{proof}
  The sets $A_\delta$ are decreasing and $\cap_{\delta>0}A_\delta=\{u_\mu = u_\nu\}$ which is $\mu$-null by our assumption.
\end{proof}

The next lemma quantifies how much ``wiggle room" of convex order the Rademacher noise introduces into a distribution. We denote by $\mathrm{Rade}$ the Rademacher distribution, or uniform on $\{-1,+1\}$.

\begin{lemma}\label{l1}
Fix $x_0\in\R$, $\lambda\in(0,1]$, and $\ee>0$. Let $\mu_1$ be a probability measure with mean~$x_0$  such that $\mu_1([x_0-\lambda\ee/6,x_0+\lambda\ee/6])=1$, and $\mu_2$ be the distribution of $X_1+\ee B\xi $ where $X_1\lawis \mu_1$,   $B\lawis \mathrm{Bernoulli}(\lambda)$  and $\xi\lawis \mathrm{Rade}$ are independent. Suppose that $\mu_3$ and $\mu_4$ are probability measures with the same mean $x_0$ such that $\mu_2\cx\mu_3$ and $W_\infty(\mu_3,\mu_4)\leq \lambda\ee/6$. Then $\mu_1\cx\mu_4$.
\end{lemma}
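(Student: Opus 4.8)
\textbf{Proof proposal for Lemma~\ref{l1}.}
The plan is to use the potential function characterization of convex order together with the Lipschitz property of potential functions to track how the four operations (perturbing $\mu_1$ by Rademacher noise, passing to a larger measure in convex order, and perturbing in $W_\infty$) affect the potential function. First I would note that all four measures $\mu_1,\mu_2,\mu_3,\mu_4$ have the same mean $x_0$, so by \citet[Theorem 3.A.2]{SS07} it suffices to prove the pointwise inequality $u_{\mu_1}\le u_{\mu_4}$ on $\R$. Fix a test point $x\in\R$. The chain of bounds I would assemble is: $u_{\mu_4}(x)\ge u_{\mu_3}(x)-\lambda\ee/6$ (from $W_\infty(\mu_3,\mu_4)\le\lambda\ee/6$ and the fact that $u$ is $1$-Lipschitz in the $W_\infty$ sense, since $|\,|y-x|-|y'-x|\,|\le|y-y'|$); then $u_{\mu_3}(x)\ge u_{\mu_2}(x)$ (from $\mu_2\cx\mu_3$); so it remains to show $u_{\mu_2}(x)\ge u_{\mu_1}(x)-\lambda\ee/3$, because $\lambda\ee/6+\lambda\ee/3<\lambda\ee$ would not close things---so actually I need the sharper statement $u_{\mu_2}(x)\ge u_{\mu_1}(x)$ whenever $x$ lies outside a neighbourhood of $x_0$, and a gain of at least $\lambda\ee/6$ at points near $x_0$.

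The heart of the argument is the comparison of $u_{\mu_1}$ and $u_{\mu_2}$. Since $\mu_2$ is the law of $X_1+\ee B\xi$ with $X_1\lawis\mu_1$ and the noise $\ee B\xi$ conditionally mean-zero given $X_1$, we have $\mu_1\cx\mu_2$ by a direct conditional Jensen argument (adding conditionally centered noise increases convex order), hence $u_{\mu_1}\le u_{\mu_2}$ everywhere; that already gives $\mu_1\cx\mu_3$ but we want the quantitative slack. So the real content is a lower bound on the \emph{gap} $u_{\mu_2}(x)-u_{\mu_1}(x)$ that is at least $\lambda\ee/6$ \emph{where it matters}, namely near $x_0$. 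Here is where the support hypothesis $\supp\mu_1\subseteq[x_0-\lambda\ee/6,x_0+\lambda\ee/6]$ enters: on the event $B=1$ (probability $\lambda$) the variable is pushed to distance roughly $\ee$ from $x_0$, while $\mu_1$ itself is concentrated within $\lambda\ee/6$ of $x_0$. I would compute $u_{\mu_2}(x)-u_{\mu_1}(x)=\lambda\,\E[\,|X_1+\ee\xi-x|+|X_1-\ee\xi-x|-2|X_1-x|\,]/2$, wait—more carefully $u_{\mu_2}(x)=\E[(1-B)|X_1-x|+B(|X_1+\ee\xi-x|)]$ and averaging over $\xi$ on $\{B=1\}$ gives $u_{\mu_2}(x)-u_{\mu_1}(x)=\lambda\,\E\!\big[\tfrac12|X_1+\ee-x|+\tfrac12|X_1-\ee-x|-|X_1-x|\big]$. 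The bracketed quantity is $0$ when $|X_1-x|\ge\ee$ and equals $\ee-|X_1-x|$ when $|X_1-x|<\ee$; in particular for $x=x_0$ it is at least $\ee-\lambda\ee/6\ge 5\ee/6$, and by continuity/Lipschitzness of the gap (it is $1$-Lipschitz in $x$ up to the factor $\lambda$, actually $2\lambda$-Lipschitz) the gap $u_{\mu_2}(x)-u_{\mu_1}(x)$ stays above $\lambda\ee/6$ as long as... hmm, it decays, so I should instead argue: either $x$ is far from $x_0$, in which case $u_{\mu_4}(x)-u_{\mu_1}(x)$ is controlled by a direct estimate (both potentials behave like $|x-x_0|$ up to $O(\ee)$ corrections and the mean constraint forces agreement to first order), or $x$ is within $\ee/2$ of $x_0$, in which case the computed gap $u_{\mu_2}-u_{\mu_1}$ is at least $\lambda(\ee-\ee/2-\lambda\ee/6)=\lambda\ee(1/2-\lambda/6)\ge\lambda\ee/3$, which absorbs the $\lambda\ee/6$ loss from the $W_\infty$ perturbation with room to spare.

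\textbf{Main obstacle.} The delicate point is handling test points $x$ at \emph{intermediate} distance from $x_0$ (roughly $\lambda\ee/6 < |x-x_0| < \ee$), where the gap $u_{\mu_2}(x)-u_{\mu_1}(x)=\lambda\,\E[(\ee-|X_1-x|)_+]$ is strictly between $0$ and $\lambda\ee$ and could in principle dip below $\lambda\ee/6$. I expect the resolution to be that in that regime one does \emph{not} need the full $\lambda\ee/6$ slack: the $W_\infty(\mu_3,\mu_4)\le\lambda\ee/6$ perturbation only costs $\lambda\ee/6$ \emph{uniformly}, but one can play it against the fact that $u_{\mu_1}$ and $u_{\mu_3}$ are already separated by at least the gap just computed, and a careful bookkeeping shows $\lambda(\ee-|x-x_0|-\lambda\ee/6) - \lambda\ee/6 \ge 0$ precisely when $|x-x_0|\le \ee(1-\lambda/3)$, while for $|x-x_0|$ beyond that the support bounds make $u_{\mu_1}(x)$ essentially linear and the mean-matching plus $\mu_1\cx\mu_2\cx\mu_3$ gives $u_{\mu_1}(x)\le u_{\mu_3}(x)$ with the linear tails forcing enough of a margin. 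I would organize the final write-up as a two-case (or three-case) split on $|x-x_0|$ versus $\ee$, with the Rademacher computation supplying the quantitative gap in the inner region and elementary triangle-inequality/linearity estimates in the outer region.
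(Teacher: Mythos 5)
Your approach is correct in substance and genuinely different from the paper's. The paper does not use potential functions at all in this lemma: it first constructs an intermediate measure $\mu_3'$ (the conditional mean of the composed kernel $\kappa^{(3)}\circ\kappa^{(2)}$) with mean $x_0$, $W_\infty(\mu_2,\mu_3')\le\lambda\ee/6$, and $\mu_3'\cx\mu_4$, so that the $W_\infty$ perturbation is absorbed and one only needs $\mu_1\cx\mu_3'$; it then verifies $\E[(X_1-a)_+]\le\E[(X_3-a)_+]$ by an explicit coupling and a case split on $a$ relative to $[x_0,x_0+\lambda\ee/6]$. You instead stay with potential functions throughout and compute the exact gap $u_{\mu_2}(x)-u_{\mu_1}(x)=\lambda\,\E[(\ee-|X_1-x|)_+]$, then trade that quantitative slack against the uniform $\lambda\ee/6$ loss coming from $W_\infty(\mu_3,\mu_4)$. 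Your identity for the gap is correct, the chain $u_{\mu_4}\ge u_{\mu_3}-\lambda\ee/6\ge u_{\mu_2}-\lambda\ee/6$ is correct, and the inner-region estimate is fine. Each route has a clear virtue: the paper's intermediate-measure trick avoids any region bookkeeping, while your potential-function computation makes the mechanism (where the Rademacher noise buys slack, and how much) completely transparent.

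The one place where your write-up wobbles is the ``outer'' region, which you leave vague (``linear tails forcing enough of a margin''), and your threshold $\ee(1-\lambda/3)$ does not match the correct value $\ee(5/6-\lambda/6)$ from $\lambda(\ee-|x-x_0|-\lambda\ee/6)\ge\lambda\ee/6$. But neither the threshold nor the vague linear-tail argument is needed: for $|x-x_0|>\lambda\ee/6$ you have $u_{\mu_1}(x)=|x-x_0|$ \emph{exactly} (since $\supp\mu_1\subseteq[x_0-\lambda\ee/6,x_0+\lambda\ee/6]$ with barycenter $x_0$), and $u_{\mu_4}(x)\ge|x-\bary(\mu_4)|=|x-x_0|$ by Jensen, so $u_{\mu_4}\ge u_{\mu_1}$ there with no accounting at all. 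Then the gap estimate is only needed on $\{|x-x_0|\le\lambda\ee/6\}$, which lies deep inside the region where $\lambda(\ee-|x-x_0|-\lambda\ee/6)\ge\lambda\ee(1-\lambda/3)\ge\lambda\ee/6$, so the two regions overlap comfortably and the argument closes.
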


\begin{proof}
  We first claim that there exists $\mu_3'$ with mean $x_0$ such that  $W_\infty(\mu_2,\mu_3')<\lambda\ee/6$ and $\mu_3'\cx\mu_4$. Using the  disintegration theorem, we may write kernels $\kappa^{(2)}_x$ and $\kappa^{(3)}_x$
 that transport $\mu_2$ to $\mu_3$ and $\mu_3$ to $\mu_4$ respectively, such that the mean of $\kappa^{(2)}_x$ is $x$ (i.e., $\kappa^{(2)}_x$ is an  MT) and $\kappa^{(3)}_x$ is concentrated in $[x-\lambda\ee/6,x+\lambda\ee/6]$ for each $x\in \R$. 
 Denote by $x^*$ the mean of the measure $\kappa^{(3)}\circ\kappa^{(2)}_x$. Let $\mu_3'=(T_3)_\#\mu_{2}$ where $T_3:x\mapsto x^*$. Since by assumption  the mean of $\kappa^{(3)}_x$ lies in $[x-\lambda\ee/6,x+\lambda\ee/6]$, we must have $|x-x^*|\leq \lambda\ee/6$. Therefore, $W_\infty(\mu_2,\mu_3')<\lambda\ee/6$. 
 Consider the map $x^*\mapsto \E[\kappa^{(3)}\circ\kappa^{(2)}_{X_2}|T_3(X_2)=x^*]$ that aggregates $\kappa^{(3)}\circ\kappa^{(2)}_x$ among all sources $x$ such that $T_3(x)=x^*$, where $X_2\lawis\mu_2$. Since such a map
 forms a martingale transport from $\mu_3'$ to $\mu_4$, it follows that
 $\mu_3'\cx\mu_4$.

 It now suffices to prove $\mu_1\cx\mu_3'$. Consider a coupling $(X_1,X_2,X_3)$ such that $X_i\lawis\mu_i$ for $i=1,2$ and $X_3\lawis\mu_3'$, $X_2=X_1+\ee B\ep$, and $|X_2-X_3|<\lambda\ee/6$.
Let $a\in\R$; we will show that $\E[(X_1-a)_+]\leq \E[(X_3-a)_+]$. The case $a>x_0+\lambda\ee/6$ is obvious. If $a\in[x_0,x_0+\lambda\ee/6]$, we have using $|X_1-x_0|\leq \lambda\ee/6$ that
$$\E[(X_1-a)_+]\leq \frac{\lambda\ee}{6}\leq \frac{\lambda}{2}\left(\ee-\frac{3\lambda\ee}{6}\right)\leq \E[(X_3-a)_+]. $$ The other cases are symmetric using our assumption $\E[X_1]=x_0=\E[X_3]$.
\end{proof}

\begin{proof}[Proof of Theorem~\ref{th:dense} for general $\mu$] Let $\mu\cx\nu$ with $\nu$ atomless, $\pi_0\in\pim$ and $\ee>0$. Consider quantities $\delta,\lambda\in(0,1)$ small enough (to be determined below) depending on $\ee$. Define  $$A_\delta =[-\delta,\delta]+\{x\in \R:0\leq u_\nu(x)-u_\mu(x)<\delta\}$$ which appears in Lemma  \ref{lem:june-2},
as well as $A_\delta' =\{x\in \R:0\leq u_\nu(x)-u_\mu(x)<\delta\}$. We divide the rest of the proof into three steps.

\paragraph{Step I: inserting Rademacher noise.} Let $X\lawis \mu$ and $\ep\lawis \mathrm{Rade}$ be  independent. 
Denote by $\tilde \mu$ the distribution of $X_\delta := X+\delta \ep\bone_{\{X\not \in A_\delta\}}$.
We have  $ \mu \le_{\rm cx}  \tilde \mu $. Observe that for $x\not\in A_{\delta}'$, $u_\nu(x)\geq u_\mu(x)+\delta$, so that $u_\nu(x)\geq u_{\tilde\mu}(x)$ by the triangle inequality. For  $x\in A_{\delta}'$, we have 
\begin{align*}
    u_{\tilde\mu}(x)&=\E[|X-x|\bone_{\{X\in A_\delta\}}]+\E[|X+\delta\ep-x|\bone_{\{X\not\in A_\delta\}}]\\
    &=\E[|X-x|\bone_{\{X\in A_\delta\}}]+\E[|X-x|\bone_{\{X\not\in A_\delta\}}]=u_\mu(x)\leq u_\nu(x),
\end{align*}where in the second equality we used that $|X-x|\geq \delta$ on the set $\{X\not\in A_\delta\}$, by definition of $A_\delta,A_\delta'$. As a result, $\mu\cx\tilde\mu\cx\nu$.

Let $\pi_1$ be any martingale transport between 
$\tilde \mu$ and $\nu$,
and $\pi_2$ be the martingale transport given by $(X,X_\delta)$. 
Note that the kernel of $\pi_2$ has support $\{-\delta,\delta\}$ on $\R\setminus A_\delta$ and is the identity kernel on~$A_\delta$. 
Composing $\pi_2$ and $\pi_1$ we get a coupling from $\mu$ to $\nu$, denoted $  \pi_3$. Let $\tilde \pi_0= (1-\lambda)\pi_0+ \lambda \pi_3$.  It then suffices to approximate $\tilde\pi_0$ instead of $\pi_0$, i.e., to show that $\tilde \pi_0$ belongs to the weak closure of $\pib$. Once that is shown, it will follow by taking $\lambda\to 0$ that $\pi_0$ is also in the closure.

\paragraph{Step II: decomposition of the measures.} Partition $\R$ into  intervals $\{I_\ell\}_{\ell\in \N}$ such that $|I_\ell|\le \lambda \ee/6$, where $|I|$ denotes the length of an interval~$I$. Let us discard all $I_\ell$ with $\nu(I_\ell)=0$. 
We also partition $\R\setminus A_\delta$ into  intervals $\{J_k\}_{k\in \N}$ such that $|J_k|\le \lambda\ee/6$, and define $J_0=A_\delta$.
Note that this is possible since $A_\delta$ is the union of  some intervals. Again, let us discard all $J_k$ with $\mu(J_k)=0$.

Next, focus on one interval $I_\ell$. Let $\N_0$ denote the set of nonnegative integers. For $k\in \N_0$, consider the image of~$\mu|_{J_k}$ 
under~$\tilde \pi_0$ 
which we denote by $\tilde \nu_k$. Moreover, let $\tilde \nu_{k,\ell} =\tilde \nu_k|_{I_\ell}$ for $k\in\N_0$.
Note that $\{\tilde \nu_{k,\ell}\}_{k\in \N_0}$ forms a decomposition of $\nu|_{I_\ell}$.
Applying  Lemma~\ref{lem:june-1} to this decomposition, 
we obtain mutually singular $\{\hat \nu_{k,\ell}\}_{k\in \N_0}$ such that 
$\nu|_{I_\ell}=\sum_{k=1}^\infty  \hat \nu_{k,\ell}$, 
 $\bary(\tilde \nu_{k,\ell})=\bary(\hat \nu_{k,\ell})$ for $k\in \N$, $W_\infty(\tilde \nu_{k,\ell},\hat \nu_{k,\ell})\leq |I_\ell|$, and $\tilde \nu_{0,\ell}\le_{\rm cx} \hat \nu_{0,\ell}$; see Figure \ref{fig:5} below for an illustration.  
 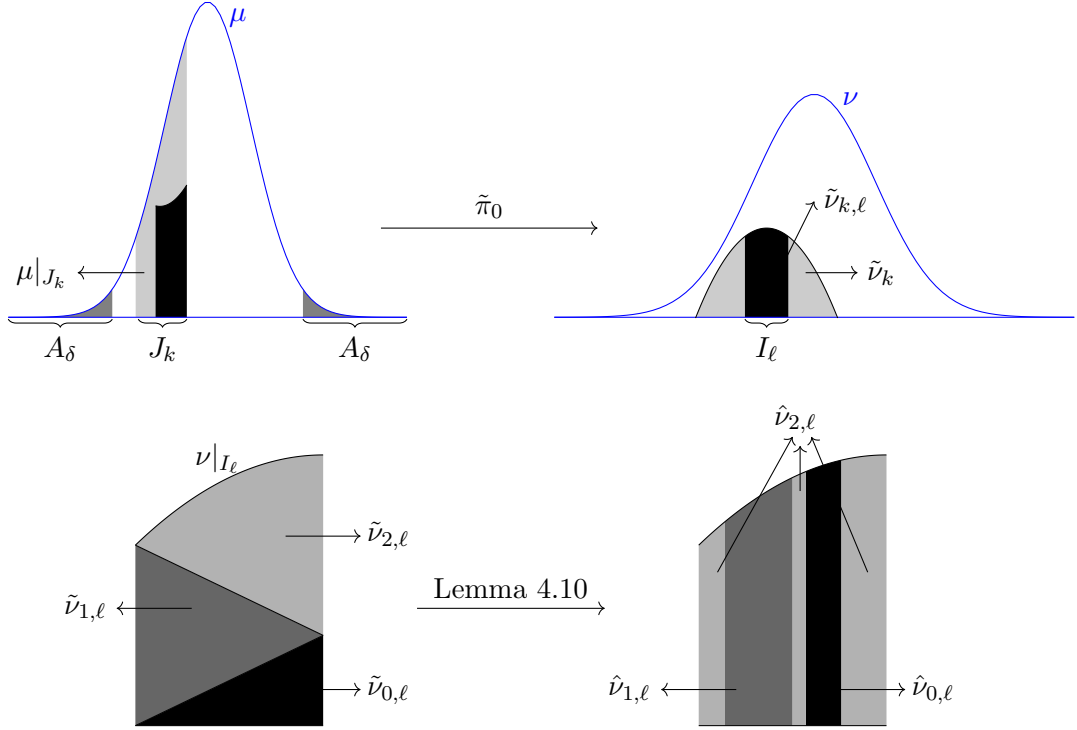
\begin{figure}[hbtp]\begin{center} 
 \begin{tikzpicture}
\centering
\begin{axis}[width=\textwidth,
      height=0.4\textwidth,
  axis lines=none,%
  domain=-10:10,
  xmin=-3, xmax=10,
  ymin=-0.2, ymax=1.5,
  samples=80,
]
  \addplot[name path=f,blue,domain={-2.3:2.3},mark=none]
    {1.414*exp(-2*x^2)}node[pos=.55, above]{$\ \ \mu$};
  \addplot[name path=g,blue,domain={-2.3:2.3},mark=none]
    {0};
     \addplot[name path=h,black,domain={-0.6:-0.24},mark=none]
    {1.1*x+0.8+x^2};
   
    \addplot[black!50]fill between[of=f and g, soft clip={domain=-2.3:-1.1}];
    \addplot[black!50]fill between[of=f and g, soft clip={domain=1.1:2.3}];
    \addplot[black!20]fill between[of=f and g, soft clip={domain=-0.83:-0.24}];
     \addplot[black!100]fill between[of=h and g, soft clip={domain=-0.6:-0.24}];
  
   \draw [decorate,decoration={brace,mirror,amplitude=3pt},xshift=0pt,yshift=1pt]
(7,18) -- (19,18) node [black,midway,yshift=-0.4cm]  { $A_\delta$};
\draw [decorate,decoration={brace,mirror,amplitude=3pt},xshift=0pt,yshift=1pt]
(41,18) -- (53,18) node [black,midway,yshift=-0.4cm]  { $A_\delta$};
  \addplot[name path=p,blue,domain={4:10}]
    {exp(-(x-7)^2)}node[pos=.55, above]{$\ \ \nu$};
  \addplot[name path=q,blue,domain={4:10}]
    {0};
     \addplot[name path=pp,black,domain={5.63:7.27}]
    {0.4-0.6*(x-6.45)^2};
    
    \addplot[black!20]fill between[of=pp and q, soft clip={domain=5.65:7.25}];
    \addplot[black!100]fill between[of=pp and q, soft clip={domain=6.2:6.7}];
    \draw[->](99,40)--(105,40)node[pos=.99, right]{$\tilde\nu_k$};
    \draw[->](94,25)--(100,72)node[pos=.99, right]{$\tilde\nu_{k,\ell}$};
    
\draw[->](22.7,40)--(15,40)node[pos=.1, left]{\hspace{-2.5cm}$\mu|_{J_k}$};
\draw[->](50,60)--(75,60)node[pos=.5, above]{$\tilde{\pi}_0$};

\draw [decorate,decoration={brace,mirror,amplitude=3pt},xshift=0pt,yshift=1pt]
(22,18) -- (27.6,18)  node[black,midway,yshift=-0.4cm] {$J_k$};
\draw [decorate,decoration={brace,mirror,amplitude=3pt},xshift=0pt,yshift=1pt]
(92,18) -- (97,18) node [black,midway,yshift=-0.4cm]  { $I_\ell$};
\end{axis}

\end{tikzpicture}

\begin{tikzpicture}
\centering
\begin{axis}[width=\textwidth,
      height=0.4\textwidth,
  axis lines=none,%
  domain=-3:3,
  xmin=-3, xmax=3,
  ymin=-0.2, ymax=4,
  samples=80,
]
\addplot[name path=a,black,domain={-2:-1},mark=none]
    {0};
  \addplot[name path=b,black,domain={-2:-1},mark=none]
    {x+2};
    \addplot[name path=c,black,domain={-2:-1},mark=none]
    {-x};
    \addplot[name path=d,black,domain={-2:-1},mark=none]
    {-(x+1)^2+3}node[pos=.55, above]{$\nu|_{I_\ell}$};

    \addplot[black!100]fill between[of=a and b, soft clip={domain=-2:-1}];
    \addplot[black!60]fill between[of=c and b, soft clip={domain=-2:-1}];
    \addplot[black!30]fill between[of=c and d, soft clip={domain=-2:-1}];
    
    \draw[->](180,230)--(220,230)node[pos=.95, right]{$\tilde\nu_{0,\ell}$};
    \draw[->](130,150)--(90,150)node[pos=.04, left]{\hspace{-2.7cm}$\tilde\nu_{1,\ell}$};
    \draw[->](180,60)--(220,60)node[pos=.95, right]{$\tilde\nu_{2,\ell}$};
\draw[->](250,150)--(350,150)node[pos=.5, above]{Lemma~\ref{lem:june-1}};
    
    \addplot[name path=e,black,domain={1:2},mark=none]
    {0};
    \addplot[name path=k,black,domain={1:2},mark=none]
    {-(x-2)^2+3};

    \addplot[black!100]fill between[of=e and k, soft clip={domain=1.57:1.76}];
    \addplot[black!60]fill between[of=e and k, soft clip={domain=1.14:1.5}];
    \addplot[black!30]fill between[of=e and k, soft clip={domain=1:1.14}];
    \addplot[black!30]fill between[of=e and k, soft clip={domain=1.76:2}];
    \addplot[black!30]fill between[of=e and k, soft clip={domain=1.5:1.57}];
    
    \draw[->](470,60)--(510,60)node[pos=.95, right]{$\hat\nu_{2,\ell}$};
     \draw[->](420,60)--(380,60)node[pos=.95, right]{\hspace{-1cm}$\hat\nu_{1,\ell}$};
    \draw[->](410,190)--(450,340);
    \draw[->](454,280)--(454,330);
    \draw[->](490,190)--(460,340)node[pos=.95, above]{\hspace{-0.5cm}$\hat\nu_{0,\ell}$};
    
\end{axis}\end{tikzpicture}
\end{center}
\caption{Illustrating the transport $\tilde{\pi}_0$ and Lemma~\ref{lem:june-1}}
\label{fig:5}
\end{figure}
Recall the definitions of $\pi_3$ and $\tilde\pi_0$.
 
 \begin{enumerate}[(a)]
     \item Applying Lemma~\ref{l1} with $\mu_1=\mu|_{J_k},\ \mu_2$ the image of $\mu_1$ under the transport $(1-\lambda)\text{id}+\lambda\pi_2$, $\mu_3=\tilde{\nu}_k=\sum_{\ell\in \N} \tilde \nu_{k,\ell}$, and $\mu_4=\sum_{\ell\in \N} \hat \nu_{k,\ell}$
     while noting that $$W_\infty(\mu_3,\mu_4)\leq \sup_{\ell\in\N}W_\infty(\tilde \nu_{k,\ell},\hat \nu_{k,\ell})\le \sup_{\ell\in\N}\tilde \nu_{k,\ell} (\R)|I_\ell| \leq \frac{\lambda\ee}{6},$$ we conclude that  $\mu |_{J_k} \le_{\rm cx} \sum_{\ell\in \N} \hat \nu_{k,\ell}$ for $k\in \N$.
     \item Similarly, it follows that $\mu|_{J_0}=\mu|_{A_\delta}\le_{\rm cx} \tilde \nu_0= \sum_{\ell\in \N}\tilde \nu_{0,\ell}\cx  \sum_{\ell\in \N}\hat \nu_{0,\ell} $.
 \end{enumerate}

 \paragraph{Step III: approximating $\tilde\pi_0$ by MMT.}
 We can now construct an approximation $\hat{\pi}$ of $\tilde{\pi}_0$ as follows. Note that since $\nu$ is atomless, so is $\hat\nu_{k,\ell}$ for all $k\in\N_0,$ $\ell\in\N$.
\begin{enumerate}[(a)]
    \item For each $k\in \N$, applying Theorem~\ref{thm1} to $\mu |_{J_k}$  and $\sum_{\ell\in \N} \hat \nu_{k,\ell}$ yields a coupling $  \hat \pi^k$ which is an MMT between  $\mu |_{J_k}$  and $\sum_{\ell\in \N} \hat \nu_{k,\ell}$. Denote by $\tilde \pi^k$
the original coupling between $\mu|_{J_k}$  and  $\tilde \nu_k$
induced by $\tilde \pi_0$. It follows that
\begin{align*}W_1 \left(\sum_{k\in \N}   \tilde \pi^k, \sum_{k\in \N} \hat \pi^k \right)&\leq W_\infty \left(\sum_{k\in \N}   \tilde \pi^k, \sum_{k\in \N} \hat \pi^k \right) \\
&\le\sup_{k\in \N}W_\infty(\tilde \pi^k, \hat \pi^k )\leq \sup_{k\in \N}\mu(J_k)\left(|J_k|+\max_{\ell\in \N}|I_\ell|\right)\leq  \lambda \ee.\end{align*}

\item We apply Theorem~\ref{thm1} to $\mu |_{A_\delta}$  and $ \sum_{\ell\in \N}\hat \nu_{0,\ell} $, and get another MMT, denoted $\hat \pi^0$. Denote by $\tilde\pi^0$ the original coupling between $\mu|_{A_\delta}$  and  $\tilde \nu_0$
induced by $\tilde \pi_0$. By Lemma~\ref{lem:june-2},  $\mu(A_\delta)\to  0$ as $\delta\to 0$, so that $W_1(\tilde \pi^0, \hat \pi^0)\to 0$.
\end{enumerate}
Since  $\{\hat\nu_{k,\ell}\}_{k\in\N_0,\ell\in\N}$ are mutually singular as noted above, it follows that $\hat \pi:=\sum_{k=0}^\infty \hat \pi^k$ is an MMT. The first marginal of $\hat \pi$ is $\mu|_{A_\delta} +\sum_{k=1}^\infty \mu|_{J_k}= \mu$
and the second marginal of  $\hat \pi$ is $ \sum_{k=0}^\infty \sum_{\ell\in \N} \hat \nu_{k,\ell}= \sum_{\ell\in \N}  \nu|_{I_\ell}=\nu$.
Therefore, $\hat \pi\in\pib$. Note that 
  $$W_1(\tilde \pi_0, \hat \pi)\le W_1(\tilde \pi^0, \hat \pi^0)  + W_1 \left (\sum_{k\in \N}   \tilde \pi^k, \sum_{k\in \N} \hat \pi^k \right).
  $$
  As shown above, both terms tend to $0$. Since $W_1$ convergence implies weak convergence, we conclude that $\pib$ is weakly dense in $\pim$.
\end{proof}

\subsection{General results on the uniqueness of MT and MMT}\label{U}

In this subsection, we characterize the uniqueness of martingale transports and Monge martingale transports using shadow measures, for general marginals $\mu,\nu\in\cP(\R)$ with $\mu \le_{\rm cx} \nu$ (possibly with atoms).
To the best of our knowledge, the uniqueness of MT has not been completely characterized, except for a few simple examples mentioned in \cite{DM18} and \cite{OS17}. 
The first result states that $\pim$ is a singleton if and only if the shadows of any decomposition of $\mu$ do not affect each other.

\begin{proposition}\label{prop:shadowpasting}
     The MT between $\mu$ and $\nu$ is unique if and only if   $\nu=\sum_{j=1}^n S^{\nu}(\mu_j)$ for any $n\in \N$ and mutually singular $\mu_1,\dots,\mu_n\le \mu$ satisfying $\sum_{j=1}^n \mu_j = \mu.$
\end{proposition}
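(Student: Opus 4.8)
\emph{Proof plan.} The idea is to tie an arbitrary $\pi\in\pim$ to the shadows of sub-measures of~$\mu$ and, for the converse, to the characterizing property of the left-curtain transport~$\pi_{\rm lc}$. The one elementary fact I would use repeatedly is this: if $\pi\in\pim$ and $B\subseteq\R$ is Borel, then $\pi|_{B\times\R}$ is again a martingale transport, with first marginal $\mu|_B$ and some second marginal $\nu_B\le\nu$ satisfying $\mu|_B\cx\nu_B$; since $\mu|_B\cx\nu_B\le\nu$, the definition of the shadow as the $\cx$-least element of $\{\eta:\mu|_B\cx\eta\le\nu\}$ gives $S^\nu(\mu|_B)\cx\nu_B$.

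\textbf{``Only if''.} Suppose $\pim=\{\pi\}$ and fix mutually singular $\mu_1,\dots,\mu_n\le\mu$ with $\sum_j\mu_j=\mu$; choose disjoint Borel $B_j$ with $\mu_j=\mu|_{B_j}$ and $\bigcup_jB_j=\R$. For each $j$ I would construct a competing MT that realizes $S^\nu(\mu_j)$: since $\mu_j\cx S^\nu(\mu_j)\le\nu$, Strassen's theorem yields an MT from $\mu_j$ to $S^\nu(\mu_j)$; by Lemma~\ref{ass} applied to $\mu=\mu_j+(\mu-\mu_j)\e\nu$ one gets $\mu-\mu_j\e\nu-S^\nu(\mu_j)$, which is a convex order as the masses agree, so Strassen again gives an MT from $\mu-\mu_j$ to $\nu-S^\nu(\mu_j)$; gluing the two pieces (concentrated on $B_j\times\R$ and $B_j^c\times\R$ respectively) produces some $\sigma_j\in\pim$ sending $\mu|_{B_j}$ to $S^\nu(\mu_j)$. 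Uniqueness forces $\sigma_j=\pi$, so the image of $\mu|_{B_j}$ under~$\pi$ is $S^\nu(\mu_j)$; summing over $j$ and using that $\{B_j\}$ partitions~$\R$ yields $\nu=\sum_{j=1}^nS^\nu(\mu_j)$.

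\textbf{``If''.} Assume the displayed identity holds for every finite mutually singular decomposition; only the case $n=2$ is needed. Fix $t\in\R$, apply the hypothesis to $\mu=\mu|_{(-\infty,t]}+\mu|_{(t,\infty)}$ to obtain $\nu=S^\nu(\mu|_{(-\infty,t]})+S^\nu(\mu|_{(t,\infty)})$, and let $\pi\in\pim$ be arbitrary, with $\nu_-,\nu_+$ the second marginals of $\pi|_{(-\infty,t]\times\R}$ and $\pi|_{(t,\infty)\times\R}$, so $\nu_-+\nu_+=\nu$. By the fact recalled above, $S^\nu(\mu|_{(-\infty,t]})\cx\nu_-$ and $S^\nu(\mu|_{(t,\infty)})\cx\nu_+$; since the two shadows sum to $\nu=\nu_-+\nu_+$, comparing these dominations forces equality in both, hence $\nu_-=S^\nu(\mu|_{(-\infty,t]})$ (e.g.\ via potential functions, which are additive in the measure and finite-valued, so two measures of equal mass in mutual convex order coincide). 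As $t$ was arbitrary, $\pi$ transports $\mu|_{(-\infty,t]}$ to $S^\nu(\mu|_{(-\infty,t]})$ for every~$t$; by the characterizing property of the left-curtain transport this means $\pi=\pi_{\rm lc}$, and since $\pi$ was arbitrary, $\pim=\{\pi_{\rm lc}\}$.

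The step I expect to be the main obstacle is the upgrade from the shadow-domination $S^\nu(\mu|_{(-\infty,t]})\cx\nu_-$ to an equality in the ``if'' direction: this requires exploiting the additivity of shadows over the decomposition $\mu|_{(-\infty,t]}+\mu|_{(t,\infty)}$ to play the two inequalities against each other, together with the fact that a finite measure on~$\R$ is determined by its potential function and total mass. The remaining ingredients—Lemma~\ref{ass}, Strassen's theorem, and the defining property of $\pi_{\rm lc}$—should be routine bookkeeping.
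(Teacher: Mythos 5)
Your proof is correct and takes essentially the same approach as the paper's: shadow associativity (Lemma~\ref{ass}), convex-order rigidity via convex test functions (the paper) or potential functions (you, equivalent), and the observation that the shadow-decomposition property determines the coupling. The only cosmetic differences are that the paper proves the ``only if'' direction in contrapositive form using a cyclic shadow construction of competitors whereas you glue one piece against the rest via Strassen, and you close the ``if'' direction by explicitly invoking the left-curtain characterization at the level sets $(-\infty,t]$, a step the paper leaves implicit.
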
 

\begin{proof}
    We first show the ``if"  statement.
    Suppose that $\nu=\sum_{j=1}^n S^{\nu}(\mu_j)  $.
    We claim that the only possible MT is to transport $\mu_i$ to $S^{\nu}(\mu_i)$ for each $i$. Suppose otherwise, and let $\nu_i$ be the image of $\mu_i$ under a different MT. Then, by the minimality property of the shadow, there exist~$i$ and a convex function~$\phi$ such that 
    $
    \int \phi~ \d \nu_i > \int \phi~ \d S^{\nu}(\mu_i)
    $.
    As $
    \sum_{j=1}^n \int \phi~ \d \nu_j  = \int \phi~ \d \nu =  \sum_{j=1}^n \int \phi~ \d S^{\nu}(\mu_j),
    $
    it follows that there exists $j$ with  $
    \int \phi~ \d \nu_j < \int \phi~ \d S^{\nu}(\mu_j),
    $
violating the definition of the shadow. %

To show the  ``only if" statement, suppose that $\nu \ne\sum_{j=1}^n S^{\nu}(\mu_j) $ for some mutually singular $\mu_1,\dots,\mu_n$ adding up to $\mu$. Note that necessarily $n\geq2$ and fix $j\in \{1,\dots,n\}$. We define $\pi_j\in\pim$ by first transporting $\mu_j$ to $S^\nu(\mu_j)$, then removing $S^\nu(\mu_j)$ from $\nu$, and continuing in the same way for $\mu_{j+1},\dots,\mu_{n},\mu_{1},\dots,\mu_{j-1}$. 
If $\pi_1,\dots,\pi_n$ all coincide, then as the image of $\mu_j$ under $\pi_j$ is $S^\nu(\mu_j)$, we have
$\nu =\sum_{j=1}^n S^{\nu}(\mu_j)$, a contradiction.
\end{proof}

The second result further characterizes when the singleton~$\pim$ consists of an MMT.
    
\begin{proposition}\label{prop:shadowpasting2}
 The MT between $\mu$ and $\nu$ is unique and is an MMT if and only if  
 for any $n\in \N$ and mutually singular $\mu_1,\dots,\mu_n\le \mu$, the shadows
$ S^{\nu}(\mu_1), \dots, S^{\nu}(\mu_n)$ are mutually singular. 
\end{proposition}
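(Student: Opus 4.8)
The plan is to bootstrap from Proposition~\ref{prop:shadowpasting} using one auxiliary fact: \emph{if the MT between $\mu$ and $\nu$ is unique, then the unique $\pi\in\pim$ transports every sub-measure $\mu'\le\mu$ onto its shadow $S^\nu(\mu')$.} To prove this I would exhibit, for a given $\mu'\le\mu$, an MT that does send $\mu'$ to $S^\nu(\mu')$: couple $\mu'$ with $S^\nu(\mu')$ by an arbitrary martingale (possible since $\mu'\cx S^\nu(\mu')$), and couple $\mu-\mu'$ with $\nu-S^\nu(\mu')$ by another. The second coupling exists because $\mu-\mu'\e\nu-S^\nu(\mu')$ by the associativity of shadows (Lemma~\ref{ass}), which together with equality of total masses gives $\mu-\mu'\cx\nu-S^\nu(\mu')$. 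By uniqueness this combined MT must equal $\pi$, which proves the claim.

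For the ``only if'' direction, I would take the unique $\pi\in\pim$ to be an MMT, $\pi=(h,\Id_{\R})_{\#}\nu$, and let $\mu_1,\dots,\mu_n\le\mu$ be mutually singular, carried by disjoint Borel sets $B_1,\dots,B_n$. Writing $\mu_j=g_j\,\d\mu$ with $g_j$ vanishing off $B_j$, the image of $\mu_j$ under $\pi$ is $g_j(h(y))\,\nu(\d y)$, a measure concentrated on $h^{-1}(B_j)$; as the $B_j$ are disjoint, so are the $h^{-1}(B_j)$, and these images are mutually singular. By the auxiliary fact the image of $\mu_j$ equals $S^\nu(\mu_j)$, so the shadows are mutually singular.

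For the ``if'' direction, assume that the shadows of mutually singular sub-measures of $\mu$ are always mutually singular. First, uniqueness of the MT follows from Proposition~\ref{prop:shadowpasting}: given mutually singular $\mu_1,\dots,\mu_n\le\mu$ with $\sum_j\mu_j=\mu$, the shadows $S^\nu(\mu_j)\le\nu$ are mutually singular, so $\sum_j S^\nu(\mu_j)\le\nu$, and since both sides have total mass $\mu(\R)=\nu(\R)$, in fact $\sum_j S^\nu(\mu_j)=\nu$. It remains to show the unique $\pi\in\pim$ is concentrated on a graph. I would disintegrate $\pi(\d x,\d y)=\nu(\d y)\otimes\pi^y(\d x)$ and argue by contradiction: if $A=\{y:\pi^y\text{ is not a Dirac mass}\}$ has $\nu(A)>0$, then, since every non-Dirac measure on $\R$ has a rational point where its cdf lies strictly between $0$ and $1$, some $t\in\mathbb{Q}$ satisfies $\nu(A_t)>0$, where $A_t=\{y:\pi^y((-\infty,t])\in(0,1)\}$. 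Split $\mu=\mu|_{(-\infty,t]}+\mu|_{(t,\infty)}$; the second marginals of $\pi|_{(-\infty,t]\times\R}$ and $\pi|_{(t,\infty)\times\R}$ are $\pi^y((-\infty,t])\,\nu(\d y)$ and $\pi^y((t,\infty))\,\nu(\d y)$, and restricted to $A_t$ both are mutually equivalent to the nonzero measure $\nu|_{A_t}$, hence \emph{not} mutually singular. By the auxiliary fact they equal $S^\nu(\mu|_{(-\infty,t]})$ and $S^\nu(\mu|_{(t,\infty)})$, contradicting the hypothesis. Therefore $\pi^y$ is Dirac for $\nu$-a.e.\ $y$, so $\pi$ is concentrated on a graph and is an MMT (see \cite[Lemma~3.1]{AhmadKimMcCann.11}).

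I expect the last direction to be the main obstacle: turning the bare failure of the backward-Monge property into a genuine decomposition of $\mu$ with overlapping shadows. Conceptually it is transparent --- a non-deterministic backward kernel at level $y$ means some $y$ receives mass from both sides of a cut $t$, and uniqueness pins those two sides to the respective shadows --- but the execution needs the measurable bookkeeping of selecting a single rational cut $t$ that works on a positive-$\nu$-measure set and checking that the two resulting images are genuinely non-singular (they are, being mutually equivalent to $\nu|_{A_t}$). The ``only if'' direction and the auxiliary fact are comparatively routine given Lemma~\ref{ass} and Proposition~\ref{prop:shadowpasting}.
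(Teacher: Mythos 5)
Your proof is correct and follows the same overall blueprint as the paper's: reduce uniqueness to Proposition~\ref{prop:shadowpasting}, establish that the unique MT sends sub-measures to their shadows, and translate the backward-Monge property into mutual singularity of these images. The differences are worth noting. Your auxiliary fact is stated and proved more cleanly and more generally than the paper does: by composing an MT from $\mu'$ to $S^\nu(\mu')$ with one from $\mu-\mu'$ to $\nu-S^\nu(\mu')$ (available by Lemma~\ref{ass} and mass matching) and invoking uniqueness, you get the claim for \emph{every} $\mu'\le\mu$. The paper instead cites the argument inside the proof of Proposition~\ref{prop:shadowpasting}, which as written applies to mutually singular decompositions $\sum_j\mu_j=\mu$; that is enough for the pieces $\mu|_{A^N_i}$ they use, but your version is the cleaner statement. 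In the ``if'' direction the paper partitions $\R$ into intervals of length $1/N$, shows for each $N$ that the set of $y$ whose backward kernel charges two different cells is $\nu$-null, and intersects over $N$; you instead detect non-deterministic kernels with a single rational cut $t$, observing that on $A_t$ the images of $\mu|_{(-\infty,t]}$ and $\mu|_{(t,\infty)}$ both have full $\nu|_{A_t}$-support. Both are sound pieces of measurable bookkeeping; your variant localizes the contradiction at one scale, the paper's gives a constructive description of the $\nu$-full set on which $\pi^y$ is Dirac. Finally, your ``only if'' direction exploits the explicit representation $\pi=(h,\Id_\R)_\#\nu$ and the disjointness of the preimages $h^{-1}(B_j)$, where the paper argues the same fact in words.
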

     
\begin{proof}
     We first show the ``if"  statement. Suppose that $\mu_1,\dots,\mu_n\le \mu$ are mutually singular and satisfy $\sum_{j=1}^n \mu_j = \mu.$ If  $ S^{\nu}(\mu_1), \dots, S^{\nu}(\mu_n)$ are mutually singular, then  $\nu=\sum_{j=1}^n S^{\nu}(\mu_j)$ and  Proposition~\ref{prop:shadowpasting} shows that the MT is unique.
Next, we show that this MT is an MMT. 
As seen in the proof of Proposition~\ref{prop:shadowpasting}, 
the MT transports any $\mu'\le \mu$ to $S^\nu(\mu')$.
For $N\in \N$, we divide $\R$ into countably many disjoint subsets $A^N_i$, $i\in \N$, each of length $1/N$.
The mutual singularity assumption ensures that the set $B^N$ of points $y$ which transport (in the $\nu\to\mu$ direction) to at least two different subsets in $ \{A^N_i:i\in \N\}$ is $\nu$-negligible.
Thus, $\nu(\bigcup_{N\in \N}B^N) =0$, showing that the set of points~$y$ that map  to a single~$x$ has $\nu$-measure~$1$. In other words, the MT is an MMT. 
 
To see the ``only if" statement, let $\mu_1,\dots,\mu_n\le \mu$ be mutually singular. We may assume that $\sum_{j=1}^n \mu_j = \mu.$
Suppose that the MT is unique, then $\nu=\sum_{j=1}^n S^{\nu}(\mu_j)  $ by Proposition~\ref{prop:shadowpasting}. 
If   $S^{\nu}(\mu_1)$ and   $ S^{\nu}(\mu_2)$ are not mutually singular, then points in their common part must be transported to two disjoint sets supporting~$\mu_1$ and~$\mu_2$, so that this MT is not an MMT.
    \end{proof}

 As seen in Example~\ref{ex:uniq}, uniqueness of MMT does not  imply uniqueness of MT. Therefore, uniqueness of MMT is not sufficient for the conditions in Proposition~\ref{prop:shadowpasting} or Proposition~\ref{prop:shadowpasting2}.

\subsection{Proof of Theorem~\ref{th:uniq}}

Continuing the study of uniqueness, we now aim to characterize the uniqueness of MMT and MT more explicitly for $\mu \le_{\rm cx} \nu$ with $\nu$ atomless.

\begin{lemma}\label{5}
Suppose that $\nu$ is atomless and there is a unique MMT. For any $\gamma_1=a_{1}\delta_{x_{1}},\gamma_2=a_{2}\delta_{x_{2}}$ with $x_{1}\neq x_{2}$ and $\gamma_1+\gamma_2\leq \mu$, we have 
$S^{\nu-S^\nu(\gamma_1)}(\gamma_2)=S^\nu(\gamma_2)$.
In particular, $S^\nu(\gamma_1)$ and $S^\nu(\gamma_2)$ are restrictions of $\nu$ to disjoint intervals.
\end{lemma}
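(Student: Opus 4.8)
The plan is to reduce everything to the single assertion that $S^\nu(\gamma_1)$ and $S^\nu(\gamma_2)$ are mutually singular, and then to extract that from the uniqueness of the MMT by exhibiting two distinct Monge martingale transports whenever it fails. First, by Lemma~\ref{atom} (applied to the atomless measures $\nu$ and $\nu-S^\nu(\gamma_1)$) we may write $S^\nu(\gamma_1)=\nu|_{I_1}$, $S^\nu(\gamma_2)=\nu|_{I_2}$ and $S^{\nu-S^\nu(\gamma_1)}(\gamma_2)=\nu|_{I_1^c\cap I_2'}$ for intervals $I_1,I_2,I_2'$. Once we know $\nu(I_1\cap I_2)=0$, we have $S^\nu(\gamma_2)=\nu|_{I_2}=\nu|_{I_2\cap I_1^c}\le \nu|_{I_1^c}=\nu-S^\nu(\gamma_1)$ while $\gamma_2\cx S^\nu(\gamma_2)$; comparing, via the minimality defining the shadow, the measure $S^\nu(\gamma_2)$ with all competitors $\eta$ satisfying $\gamma_2\cx\eta\le\nu-S^\nu(\gamma_1)\le\nu$ in both directions yields $S^{\nu-S^\nu(\gamma_1)}(\gamma_2)=S^\nu(\gamma_2)$, which is the main claim. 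The ``in particular'' then follows by a short one-dimensional surgery: either one of $I_1,I_2$ contains the other (forcing the contained interval to carry zero $\nu$-mass, hence the corresponding $\gamma_i$ to vanish, making the statement trivial), or after discarding a $\nu$-null subinterval from each we obtain genuinely disjoint intervals carrying the same restrictions of $\nu$.

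For the core step, suppose towards a contradiction that $\nu(I_1\cap I_2)>0$, and set $\mu':=\mu-\gamma_1-\gamma_2\ge 0$. Iterating Lemma~\ref{ass} along $\mu=\gamma_1+\gamma_2+\mu'\cx\nu$ shows that $\mu'\cx\rho_B:=\nu-S^\nu(\gamma_1)-S^{\nu-S^\nu(\gamma_1)}(\gamma_2)$, that $\rho_B$ is atomless (a sub-measure of $\nu$), and that $S^\nu(\gamma_1)$, $S^{\nu-S^\nu(\gamma_1)}(\gamma_2)$, $\rho_B$ partition $\nu$. Build $\pi_B$ as the sum of three couplings: $\gamma_1$ with $S^\nu(\gamma_1)$ (the constant map $y\mapsto x_1$, an MMT since its target is atomless), $\gamma_2$ with $S^{\nu-S^\nu(\gamma_1)}(\gamma_2)$ (the constant map $y\mapsto x_2$, again an MMT), and $\mu'$ with $\rho_B$ (an MMT by Theorem~\ref{thm1}). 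Since the three target measures are mutually singular, aggregating these MMTs gives an MMT, exactly as at the end of the proof of Theorem~\ref{thm1}; processing the atoms in the opposite order produces another MMT $\pi_A$. Now fix a Borel $E\subseteq I_1\cap I_2$ with $\nu(E)>0$. Under $\pi_B$ the only image set meeting $E$ is $S^\nu(\gamma_1)=\nu|_{I_1}\supseteq\nu|_E$, so all of the $\nu$-mass on $E$ is transported from $x_1$ and $\pi_B(\{x_1\}\times E)=\nu(E)>0$; under $\pi_A$ the only image set meeting $E$ is $S^\nu(\gamma_2)=\nu|_{I_2}\supseteq\nu|_E$, whence that mass comes entirely from $x_2$ and $\pi_A(\{x_1\}\times E)=0$. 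Thus $\pi_A\neq\pi_B$, contradicting uniqueness of the MMT. Hence $\nu(I_1\cap I_2)=0$, and the first paragraph finishes the proof.

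I expect the main obstacle to be verifying that $\pi_A$ and $\pi_B$ genuinely lie in $\pib$ — i.e., that the ``running-shadow'' construction, cleaned up by Theorem~\ref{thm1}, produces mutually singular images. This hinges on Lemma~\ref{atom}: because $\nu$ and $\nu-S^\nu(\gamma_1)$ are atomless, the shadows of atoms appearing here are honest \emph{restrictions} of $\nu$ to intervals (not fractional multiples), so the three pieces of each coupling tile $\nu$ and the aggregation argument from the proof of Theorem~\ref{thm1} applies verbatim. A secondary, purely one-dimensional nuisance is the interval surgery in the ``in particular'' when $I_1$ and $I_2$ nominally overlap on a $\nu$-null set. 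Finally, it is worth noting a shorter but less explicit route: since $\pib$ is a singleton and is weakly dense in $\pim$ by Theorem~\ref{th:dense}, one gets $\pim=\pib$, so the MT is unique and is an MMT; Proposition~\ref{prop:shadowpasting2} applied to the mutually singular pair $\gamma_1,\gamma_2\le\mu$ then gives $S^\nu(\gamma_1)\perp S^\nu(\gamma_2)$ at once, after which the first paragraph concludes.
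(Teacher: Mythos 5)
Your proof is correct and its core is the same as the paper's: build the two shadow-based MMTs by processing $\gamma_1,\gamma_2$ in opposite orders, aggregate each using the mutual singularity of the three pieces (Lemma~\ref{atom} plus associativity), and invoke uniqueness. The paper extracts the conclusion by directly identifying the images of $\gamma_2$ under $\pi$ and $\pi'$, whereas you route it through the auxiliary claim $\nu(I_1\cap I_2)=0$ and a test set $E\subseteq I_1\cap I_2$. Your detour is a valid (and if anything slightly more robust) variant: the test-set comparison $\pi_B(\{x_1\}\times E)=\nu(E)>0=\pi_A(\{x_1\}\times E)$ goes through verbatim even when $\mu$ has mass at $x_1$ or $x_2$ beyond $\gamma_1,\gamma_2$, where the ``image of $\gamma_2$ under $\pi$'' requires a small extra step (restricting to the complement of $\supp\bigl(\nu-S^\nu(\gamma_1+\gamma_2)\bigr)$, which works because that residual measure is mutually singular with both shadows of $\gamma_2$). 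The shortcut you mention at the end---$\pib$ singleton plus density from Theorem~\ref{th:dense} forces $\pim=\pib$, then Proposition~\ref{prop:shadowpasting2} gives $S^\nu(\gamma_1)\perp S^\nu(\gamma_2)$---is exactly the argument the paper alludes to right after stating Theorem~\ref{th:uniq} and deliberately avoids, since the purpose of Lemma~\ref{5} is precisely to give a direct proof of uniqueness that does not hinge on the lengthy denseness theorem.
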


\begin{proof}
Recall that shadows are associative (Lemma~\ref{ass}). %
As $\gamma_1\cx S^\nu(\gamma_1)$ and $\gamma_2\cx S^{\nu-S^\nu(\gamma_1)}(\gamma_2)$, by Theorem~\ref{thm1} we obtain two MMTs, say $\pi_1$ from   $\gamma_1$ to $S^\nu(\gamma_1)$ and $\pi_2$ from  $\gamma_2$ to $S^{\nu-S^\nu(\gamma_1)}(\gamma_2)$. Moreover, $\mu-\gamma_1-\gamma_2\cx \nu-S^\nu(\gamma_1)-S^{\nu-S^\nu(\gamma_1)}(\gamma_2)=\nu-S^\nu(\gamma_1+\gamma_2)$, yielding another MMT~$\pi_3$ from  $\mu-\gamma_1-\gamma_2$ to  $\nu-S^\nu(\gamma_1+\gamma_2)$. By Lemma~\ref{ms}, the measures $S^\nu(\gamma_1),\ S^{\nu-S^\nu(\gamma_1)}(\gamma_2)$ and $\nu-S^\nu(\gamma_1+\gamma_2)$ are mutually singular. Thus, we may aggregate $\pi_i$, $i=1,2,3$ to get an MMT $\pi$ from $\mu$ to $\nu$.

Repeat the above construction switching the roles of $\gamma_1,\gamma_2$. The resulting MMT $\pi'$ transports $\gamma_2$ to $S^\nu(\gamma_2)$. As $\pi$ transports $\gamma_2$ to $S^{\nu-S^\nu(\gamma_1)}(\gamma_2)$ and $\pi=\pi'$ by the assumed uniqueness, we conclude $S^{\nu-S^\nu(\gamma_1)}(\gamma_2)=S^\nu(\gamma_2)$. The last statement then follows from Lemma~\ref{ms}.
\end{proof}

\begin{proof}[Proof of Theorem~\ref{th:uniq}] Clearly (i) implies (ii). To see that (ii) implies (iii), suppose that the MMT from $\mu$ to $\nu$ is unique. Consider the atomic part $\mu_a:=\sum_{j\in \N} a_j\delta_{x_j}$~of $\mu$ where the~$x_j$ are distinct.  Applying Lemma~\ref{5} with $\gamma_1=a_j\delta_{x_j}$ and $\gamma_2=a_{j'}\delta_{x_{j'}}$ yields that the shadows $S^\nu(a_j\delta_{x_j})$ are restrictions of $\nu$ to disjoint intervals. Removing $\mu_a$ and its shadow, we may thus assume that $\mu$ is atomless and prove $\mu=\nu$.  Suppose that $\mu\neq\nu$. There exists an interval $[a,b]$ such  that $\mu([a,b])> \nu([a,b])$. More precisely, we can find $a<b$ and $\ee_1,\ee_2>0$ such that 
\[
  0< \nu([a-\ee_1,a]),\nu([b,b+\ee_2]) < \frac{\mu([a,b])-\nu([a,b])}{2} \quad\mbox{and}\quad \mu([a-\ee_1,a]),\mu([b,b+\ee_2])>0.
\]
The minimality property of the shadow implies that either (a) $\nu|_{[a-\ee_1,a]}\leq S^\nu(\mu|_{[a,b]})$ or (b) $\nu|_{[b,b+\ee_2]}\leq S^\nu(\mu|_{[a,b]})$. Suppose that (a) holds. Similarly as in the proof of Lemma~\ref{5}, taking shadow first on $\mu|_{[a,b]}$ and then on $\mu|_{[a-\ee_1,a]}$, or vice versa, yields different MMTs, a contradiction. Case~(b) is analogous, thus (ii) implies (iii).

Suppose that (iii) holds and consider mutually singular $\mu_1,\dots,\mu_n\leq \mu$ satisfying $\sum_{j=1}^n\mu_j=\mu$. Decompose them into an atomic part $\mu^a_j$ and a continuous part $\mu^c_j$.  Then by (iii) and Lemma~\ref{atom}, $S^\nu(\mu_j^a)$ and $S^\nu(\mu_j^c)=\mu_j^c$ are mutually singular, and these are mutually singular for distinct $j$'s because $(\mu_j^a)_{1\leq j\leq n}$ are mutually singular. This implies $\nu=\sum_{j=1}^nS^\nu(\mu_j)$. Thus Proposition~\ref{prop:shadowpasting} shows that (i) holds, completing the proof.
\end{proof}

\section{Concluding remarks}
\label{sec:concluding}

In this section, we briefly discuss some open problems.

\paragraph{MMT in higher dimensions.}
The present paper focuses on martingale transport on~$\R$. Starting with 
\cite{GKL19}, \cite{OS17}, and \cite{DeMarchTouzi.17}, martingale transport in~$\R^{d}$ has been actively studied in the recent literature, but is well known to be intricate. See, e.g., \cite{WZ22} for further references. We continue to use $\mathcal M(\mu,\nu)$  (resp.~$\mathcal M_{M}(\mu,\nu)$) for the set of all martingale (resp.~Monge martingale) transports between $\mu$ and $\nu$.

A crucial ingredient in analyzing martingale transport in higher dimensions is the irreducible decomposition, which disintegrates the martingale transport problem into irreducible components. Following \cite{DeMarchTouzi.17}, let $\hat{\mathcal{K}}$ be the set of all convex closed subsets of $\R^d$. For probability measures $\mu,\nu$ on $\R^d$, the irreducible components map $I:\R^d\to\hat{\mathcal{K}}$ is the ($\mu$-a.e.~unique) map such that for some $\hat{\P}\in\pim$, $\ri\conv\supp \P_X\subseteq I(X)=\ri\conv\supp \hat{\P}_X$ holds $\mu$-a.e.~(where $X\lawis\mu$ and $\{\P_x\}_{x\in\R^d}$ is the disintegration of $\P$), for all $\P\in\pim$. Moreover, $\{I(x):x\in\R^d\}$ forms a partition of $\R^d$. We may further disintegrate $\nu$ into $\{\nu_x:x\in\R^d\}$ along such a partition.

\begin{conjecture}\label{conj}
    Let $\mu,\nu$ be probability measures on $\R^d$ satisfying $\mu\cx\nu$. Suppose that $\nu_x$ is atomless for $\mu$-a.e.~$x\in\R^d$. Then $\pib$ is weakly dense in $\pim$. If $\mu$ is discrete, it is also dense for the $\infty$-Wasserstein topology.
\end{conjecture}

In particular, an analogue of the existence result in Theorem~\ref{thm1} may pave the path to a denseness result along the lines of Theorem~\ref{th:dense} with similar proof ideas.
The main difficulty in proving Conjecture \ref{conj} lies in constructing a suitable analogue of the left-curtain coupling in higher dimensions.
Note also that in dimension $d=1$, the irreducible decomposition (cf.~Remark \ref{remark:referee}) is countable, so assuming non-atomicity before the irreducible decomposition is sufficient. 
The following remark shows that the absence of atoms (before the irreducible decomposition) is not sufficient for existence in dimensions $d>1$. 

\begin{remark}
Na\"{i}ve analogues of Theorem~\ref{thm1} and Theorem~\ref{th:refinedStrassen} in $\R^d$, assuming only that the  marginals are atomless, are false. Let $\mu$ be uniform on $[0,1]\times\{0,\pm 1\}$ and $\nu$ uniform on $[0,1]\times\{\pm 2\}$. Let 
$(X,Y)=((X_1,X_2),(Y_1,Y_2))$ be a martingale transport; then $(X_{1},Y_{1})$ is a martingale with both marginals $\Unif[0,1]$, so that $X_{1}=Y_{1}$. Moreover, $(X_{2},Y_{2})$ is the unique (in law) martingale from $\Unif\{0,\pm 1\}$ to $\Unif\{\pm 2\}$. We see that $\pim$ is a singleton, and this martingale transport is clearly not (backward) Monge. In the language of \cite{DeMarchTouzi.17}, the irreducible decomposition corresponds to disintegration along the first coordinate; cf.~Example 2.2 of \cite{OS17}.  As seen in Remark~\ref{StrassenAndExistence}, this non-existence of an MMT also precludes the assertion of Theorem~\ref{th:refinedStrassen}.
\end{remark}

\paragraph{Denseness results under different constraints.}
Going back to transports on~$\R$, let us turn to a different generalization, namely the constraint. We have seen that martingale transports typically do not admit Monge maps in the forward direction and that the left-curtain transport is supported on the union of two graphs. These facts are due to the martingale constraint. Similar phenomena arise for other constraints, in particular the supermartingale constraint $\E[Y|X]\leq X$ of \cite{NS18}, \cite{BayraktarDengNorgilas.21,BayraktarDengNorgilas.22} and the directional constraint $X\leq Y$ of \cite{NW22}.
A supermartingale coupling between $\mu$ and $\nu$ exists if and only if $\mu\leq_{\rm cd}\nu$ (meaning that $\int\phi\,\d\mu\leq \int\phi\,\d\nu$ for all convex decreasing $\phi$), and a coupling $(X,Y)$ of $\mu$ and $\nu$ satisfying the directional constraint $X\leq Y$ exists if and only if $\mu\leq_{\rm st}\nu$ (meaning that their cdfs satisfy $F_\mu\geq F_\nu$).
We speculate that, in analogy with Theorem~\ref{th:dense}, the set of constrained (backward) Monge transports is dense also in those settings, and possibly for other constraints.

\begin{conjecture}
    Let $\mu\leq_{\rm cd}\nu$ with $\nu$ atomless. Then the set of (backward) Monge supermartingale couplings is weakly dense in the set of supermartingale couplings between $\mu$ and $\nu$. 
\end{conjecture}

\begin{conjecture}
    Let $\mu\leq_{\rm st}\nu$ with $\nu$ atomless. Then the set of (backward) Monge couplings $(X,Y)$ satisfying $X\leq Y$ is weakly dense in the set of all couplings $(X,Y)$ satisfying $X\leq Y$ between $\mu$ and $\nu$. 
\end{conjecture}

\section*{Acknowledgements}
We thank two anonymous referees for their careful reading of a previous version of this paper and for pointing out Remark \ref{remark:referee}.

\end{document}